\newcommand{\Hc}{\mathcal{H}}
\newcommand{\oT}{\mathcal{T}}
\newcommand{\oN}{\mathcal{N}}
\DeclareMathOperator*{\esssup}{ess\,sup}
\newtheorem{thm}{Theorem}[section]
\newtheorem{lem}{Lemma}[section]
\newtheorem{asp}{Assumption}[section]
\newtheorem{rem}{Remark}[section]
\numberwithin{equation}{section}
\def\ps@pprintTitle{%
  \let\@oddhead\@empty
  \let\@evenhead\@empty
  \let\@oddfoot\@empty
  \let\@evenfoot\@oddfoot
}
\begin{document}
\begin{frontmatter}
		\title{Convergence analysis of an operator-compressed multiscale finite element method for Schr\"{o}dinger equations with multiscale potentials}
								
		\author[hku]{Zhizhang Wu}
		\ead{wuzz@hku.hk}
		\author[hku]{Zhiwen Zhang\corref{cor1}}
		\ead{zhangzw@hku.hk}
		
		\address[hku]{Department of Mathematics, The University of Hong Kong, Pokfulam Road, Hong Kong SAR, China.}
		\cortext[cor1]{Corresponding author}
\begin{abstract}
\noindent
In this paper, we analyze the convergence of the operator-compressed multiscale finite element method (OC MsFEM) for Schr\"{o}dinger equations with general multiscale potentials in the semiclassical regime. In the OC MsFEM the multiscale basis functions are constructed by solving a constrained energy minimization. Under a mild assumption on the mesh size $H$, we prove the exponential decay of the multiscale basis functions so that localized multiscale basis functions can be constructed, which achieve the same accuracy as the global ones if the oversampling size $m = O(\log(1/H))$. We prove the first-order convergence in the energy norm and second-order convergence in the $L^2$ norm for the OC MsFEM and super convergence rates can be obtained if the solution possesses sufficiently high regularity.  By analysing the regularity of the solution, we also derive the dependence of the error estimates on the small parameters of the Schr\"{o}dinger equation. We find that the OC MsFEM outperforms the finite element method (FEM) due to the super convergence behavior for high-regularity solutions and weaker dependence on the small parameters for low-regularity solutions in the presence of the multiscale potential. Finally, we present numerical results to demonstrate the accuracy and robustness of the OC MsFEM.

\medskip
\noindent{\textbf{Key words.}} Schr\"{o}dinger equation;  semiclassical regime; multiscale potential; multiscale finite element basis; operator compression;  convergence analysis.
			
\medskip
\noindent{{\textbf{AMS subject classifications.}} 65M12, 65M60, 65K10, 35Q41, 74Q10}
\end{abstract}	
\end{frontmatter}

\section{Introduction}
\label{sec: introduction}

In solid state physics, an important model to describe the motion of an electron in the medium with microstructures is the Schr\"{o}dinger equation with a multiscale potential in the semiclassical regime. A widely studied model is the electron motion in a perfect crystal with an external field, where the potential is a linear combination of an oscillatory periodic potential and a slowly-varying external potential. This model can be efficiently solved by a number of numerical schemes that make use of the periodic structure of the potential, e.g. the Bloch decomposition-based time-splitting pseudospectral method \cite{Huangetal:2007, Huang:2008,WuHuang:2016}, the Gaussian beam method \cite{Jin:08, Jin:2010, Qian:2010, Yin:2011}, and the frozen Gaussian approximation method \cite{DelgadilloLuYang:2016}. With the recent development in nanotechnology, increasing interest has been shown in quantum heterostructures with tailored functionalities, such as heterojunctions, including the ferromagnet/metal/ferromagnet structure for giant megnetoresistance \cite{Zutic:2004}, the silicon-based heterojunction for solar cells \cite{Louwenetal:2016}, and quantum metamaterials \cite{Quach:2011}. For the electron motion in these heterostructures, however, the potential cannot be formulated in the above-mentioned form since a basic feature of these devices is the combination of dissimilar crystalline structures, which leads to a heterogeneous interaction of the electron with ionic cores in different lattice structures. Consequently, available methods based on asymptotic analysis \cite{miller2006applied,murray2012asymptotic} cannot be applied to these heterogeneous models since these methods require an additive form of different scales in the potential term in order to construct the prescribed approximate solutions.

In this paper, we consider the semiclassical Schr\"{o}dinger equation with a general multiscale potential
\begin{equation} \label{eq: time-dependent Schrodinger equation in the whole space}
\left\{
\begin{aligned}
& i \varepsilon \partial_t u^{\varepsilon, \delta} = -\frac{1}{2} \varepsilon^2 \Delta u^{\varepsilon, \delta} + V^{\delta}(\bx) u^{\varepsilon, \delta}, \quad \bx=(x_1,...,x_d) \in \mathbb{R}^d, \quad t \in \mathbb{R}, \\
& u^{\varepsilon, \delta}|_{t = 0} = u_0^{\varepsilon}(\bx), \quad \bx \in \mathbb{R}^d,
\end{aligned}
\right.
\end{equation}
where $0 < \varepsilon \ll 1$ is an effective Planck constant describing the microscopic/macroscopic scale ratio, $u_0^{\varepsilon}(\bx)$ is the initial data that is dependent on the effective Planck constant $\varepsilon$ (motivated by the WKB approximation), $d$ is the spatial dimension, and $V^{\delta}(\bx) \in \mathbb{R}$ is a general multiscale potential depending on the small parameter $0 < \delta \ll 1$, which describes the multiscale structure in the potential and can be different from $\varepsilon$. By assuming the dependence of the potential on $\delta$, we can see in the subsequent analysis how the multiscale potential $V^{\delta}$ influences the regularity of the solution and hence the error estimates of different numerical methods.

For the numerical solution of \eqref{eq: time-dependent Schrodinger equation in the whole space}, traditional methods like the FEM \cite{dorfler1996time} and finite difference method (FDM) \cite{markowich1999numerical,markowich2002wigner} are prohibitively costly due to the strong mesh size restrictions induced by the small effective Planck constant and multiscale structures in the potential, while the time-splitting spectral method \cite{bao2002time} would suffer from reduced convergence order and great approximation errors if the potential possesses discontinuities. In order to efficiently compute \eqref{eq: time-dependent Schrodinger equation in the whole space} with the multiscale potential $V^{\delta}(\bx)$ in a general form, an OC MsFEM for the Schr\"{o}dinger equation was proposed in \cite{chen2019multiscale}. The OC MsFEM for the Schr\"{o}dinger equation is motivated by several works relevant to the compression of the elliptic operator with heterogeneous and highly varying coefficients, e.g. the multigrid method for multiscale problems from the perspective of a decision theory discussed in \cite{Owhadi:2015,Owhadi:2017}, the sparse operator compression of high-order elliptic operators with rough coefficients studied in \cite{hou2017sparse} and the modified variational multiscale method using correctors introduced in \cite{Malqvist:2014}. And we remark here that many efficient methods have also been developed for the multiscale PDEs in the past few decades. See for example \cite{chung2018constraint,EngquistE:03,EfendievHou:09,HanZhangCMS:12,HouWuCai:99,Hughes:1998,Jenny:03,Kevrekidis:2003,OwhadiZhang:07} and the references therein.

In the OC MsFEM for the Schr\"{o}dinger equation, the multiscale basis functions are constructed via a constrained energy minimization associated with the hamiltonian $\Hc = - \frac{1}{2} \varepsilon^2 \Delta + V^{\delta}(\bx)$. The fully discrete scheme can be given with a finite difference scheme in temporal discretization, e.g. the Crank-Nicolson scheme. Through the energy minimization, the local microstructures induced by the hamiltonian $\Hc$ are incorporated in the basis functions so that the multiscale features of the solution are well captured by the basis functions. Moreover, the energy minimization can be solved numerically for arbitrary bounded multiscale potentials and thus the OC MsFEM for the Schr\"{o}dinger equation can be applied to a multiscale potential $V^{\delta}$ in a general form. In \cite{chen2019multiscale}, the OC MsFEM is shown to be accurate for various types of multiscale potentials. So far, however, there have been no rigorous results on the approximation error of the OC MsFEM for the semiclassical Schr\"{o}dinger equation with multiscale potentials.

In this paper, we focus on the convergence analysis of the OC MsFEM for Schr\"{o}dinger equations with multiscale potentials in the semiclassical regime. The property of exponential decay is proved for the multiscale basis functions constructed through the constrained energy minimization, provided that the mesh size $H = O(\varepsilon)$. Thus, the localized multiscale basis functions can be constructed via a modified constrained energy minimization. The localized basis functions are shown to admit the same accuracy as the global ones if the oversampling size $m = O(\log(1/H))$. By using the properties of Cl\'{e}ment-type interpolation \cite{brenner2007mathematical,clement1975approximation,scott1990finite}, convergence rates of first order in the energy norm and second order in $L^2$ norm are proved for the Galerkin approximation in the multiscale finite element space. Furthermore, super convergence rates of second order in the energy norm and third order in $L^2$ norm can be achieved if the solution possesses sufficiently high regularity. Combining the analysis on the regularity of the solution, we also derive the dependence of the error estimates on the small parameters $\varepsilon$ and $\delta$. We find that using the same mesh size the OC MsFEM gives more accurate results than the FEM for the  Schr\"{o}dinger equation with multiscale potentials due to its super convergence behavior for high-regularity solutions and weaker dependence on the small parameters $\varepsilon$ and $\delta$ for low-regularity solutions. The weaker dependence of the OC MsFEM on the small parameters results from the fact that the projection error estimate of OC MsFEM depends on higher temporal regularity of the solution while the projection error estimate of FEM depends on higher spatial regularity of solution, and the fact that the spatial derivatives of the solution are more oscillatory than the time derivatives in the presence of the multiscale potential; see Lemmas \ref{lem: temporal regularity}, \ref{lem: spatial regularity}, \ref{lem: temporal spatial regularity}, Remark \ref{rem: more oscillatory spatial derivatives} and the discussion in Section \ref{sec: Projection error}. Finally, we present numerical results to confirm our theoretical findings. The numerical examples also show that the OC MsFEM is robust in the sense that it still yields high accuracy for discontinuous multiscale potentials.

The rest of the paper is organized as follows. In Section \ref{sec: preliminaries}, we will introduce the setting of the problem and some preliminaries on the Cl\'{e}ment-type interpolation, and we will also prove some estimates of the regularity of the solution. In Section \ref{sec: OC basis}, we will prove the exponential decay of the global basis functions and discuss the approximation property of the projection in both global and localized multiscale space. Then, we will study the convergence rates of the OC MsFEM for the Schr\"{o}dinger equation in Section \ref{sec: convergence analysis}. Numerical examples will be shown in Section \ref{sec: numerical experiments} to support our analysis. Finally, some conclusions will be drawn in Section \ref{sec: conclusion}.

\section{Problem setting and some preparations}
\label{sec: preliminaries}
In this section, the problem setting of the semiclassical Schr\"{o}dinger equation with a multiscale potential is formulated. Then some results on the Cl\'{e}ment-type interpolation are introduced. In addition, we prove some estimates of the regularity of the solution.

All functions are complex-valued and the conjugate of a function $v$ is denoted by $\bar{v}$. Standard notations on Sobolev space are used. The spatial derivative is denoted by $D_\bx^{\bm{\sigma}}$, where $D_\bx^{\bm{\sigma}} w = \partial_{x_1}^{\sigma_1}\cdots\partial_{x_d}^{\sigma_d} w$ with the multi-index $\bm{\sigma} = (\sigma_1, \ldots, \sigma_d) \in \mathbb{N}^d$ and $|\bm{\sigma}| = \sigma_1 + \cdots + \sigma_d$. The spatial $L^2$ inner product is denoted by $(\cdot, \cdot)$ with $(v, w) = \int_{\Omega} v \bar{w}$, the spatial $L^2$ norm is denoted by $|| \cdot ||$ with $|| w ||^2 = (w, w)$, $|| \cdot ||_{\infty}$ is the spatial $L^{\infty}$ norm with $|| w ||_{\infty} = \esssup_{\bx \in \Omega} | w(\bx) |$ and the spatial $H^k$ norm is denoted by $|| \cdot ||_{H^k}$ with $|| w ||_{H^k}^2 = || w ||^2 + \sum_{0 < |\bm{\sigma}| \le k} || D_\bx^{\bm{\sigma}} w ||^2$. And we define $H^1_P(\Omega) = \{ w \in H^1(\Omega) | w \text{ is periodic on } \partial \Omega \}$, where $\Omega$ is a bounded domain. To simplify notations, we denote by $C$ a generic positive constant which may be different at each occurrence but is independent of the small parameters $\varepsilon, \delta$, the oversampling size $m$, the spatial mesh size $H$, and the time step size $\Delta t$.

\subsection{Model setting}
For numerical purposes, \eqref{eq: time-dependent Schrodinger equation in the whole space} is restricted on a bounded domain $\Omega = [0, 2\pi]^d$ with prescribed periodic boundary conditions. The following problem is considered:
\begin{equation} \label{eq: time-dependent Schrodinger equation}
\left\{
\begin{aligned}
& i \varepsilon \partial_t u^{\varepsilon, \delta} = -\frac{1}{2} \varepsilon^2 \Delta u^{\varepsilon, \delta} + V^{\delta}(\bx) u^{\varepsilon, \delta}, \quad \bx \in \Omega, 0 < t \le T, \\
& u^{\varepsilon, \delta}, D^{\bm{\sigma}}_\bx u^{\varepsilon, \delta} \text{ are periodic on } \partial \Omega, \quad |\bm{\sigma}| = 1, 0 < t \le T, \\
& u^{\varepsilon, \delta}|_{t = 0} = u_0^{\varepsilon}(\bx), \bx \in \Omega.
\end{aligned}
\right.
\end{equation}
We assume that $u_0^{\varepsilon}(\bx)$ satisfies $|| D_\bx^{\bm{\sigma}} u_0^{\varepsilon} || \le \frac{C}{\varepsilon^{|\bm{\sigma}|}}$. And for the multiscale potential $V^{\delta}$, we assume that $V_{\min} \le V^{\delta}(\bx) \le V_{\max}$, $\forall \bx \in \Omega$, where $0 < V_{\min} \le V_{\max}$ and $|| D^{\bm{\sigma}}_\bx V^{\delta} ||_{\infty} \le \frac{C}{\delta^{|\bm{\sigma}|}}$. Moreover, we assume that $D^{\bm{\sigma}}_\bx u^{\varepsilon, \delta}$ are periodic on $\partial \Omega$ for $|\bm{\sigma}| = 2, 3$ and $0 < t \le T$.

\begin{rem}
If $\tilde{u}^{\varepsilon, \delta}$ is the solution of \eqref{eq: time-dependent Schrodinger equation} with the potential $\tilde{V}^{\delta}$, where $-V_0 \le \tilde{V}^{\delta} \le V_0$ for some $V_0 > 0$, we may set $u^{\varepsilon, \delta} = e^{- 2 i V_0 t / \varepsilon} \tilde{u}^{\varepsilon, \delta}$. Then, $u^{\varepsilon, \delta}$ is the solution of \eqref{eq: time-dependent Schrodinger equation} with the potential $V^{\delta} = \tilde{V}^{\delta} + 2V_0$ and $V_0 \le V^{\delta} \le 3V_0$.
\end{rem}

In the following, for brevity of notations, the superscripts $\varepsilon, \delta$ will be dropped for $u_0^{\varepsilon}, u^{\varepsilon, \delta}$ and $V^{\delta}$ unless necessary. We introduce the bilinear form associated with the Schr\"{o}dinger operator $\Hc = -\frac{1}{2} \varepsilon^2 \Delta + V$ as
\begin{equation}
a(v, w) = \frac{1}{2} \varepsilon^2 (\nabla v, \nabla w) + (Vv,w).
\end{equation}
The following energy norm is introduced:
\begin{equation}
|| w ||_{e} = a(w, w)^{\frac{1}{2}} = \left( \frac{\varepsilon^2}{2} || \nabla w ||^2 + (Vw, w) \right)^{\frac{1}{2}}.
\end{equation}
Then, the energy norm $|| \cdot ||_e$ is equivalent to the $H^1$ norm $|| \cdot ||_{H^1}$ and it is easy to prove the following lemma.
\begin{lem}
For any $v, w \in H^1(\Omega)$,
\begin{equation}
|a(v, w)| \le || v ||_e || w ||_e.
\end{equation}
\end{lem}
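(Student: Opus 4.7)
The plan is to recognize that the inequality in question is essentially a Cauchy--Schwarz bound for the sesquilinear form $a(\cdot,\cdot)$. Since the potential satisfies $V_{\min}\le V(\bx)\le V_{\max}$ with $V_{\min}>0$, the quantity $a(v,v)=\tfrac{\varepsilon^2}{2}\|\nabla v\|^2+(Vv,v)$ is nonnegative and, together with Hermitian symmetry $a(v,w)=\overline{a(w,v)}$ (a consequence of $V$ being real-valued), makes $a$ a positive semi-definite Hermitian sesquilinear form on $H^1(\Omega)$. The Cauchy--Schwarz inequality for such forms then yields $|a(v,w)|\le a(v,v)^{1/2}a(w,w)^{1/2}=\|v\|_e\|w\|_e$, which is exactly the claim.

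If one prefers to avoid invoking the abstract Cauchy--Schwarz for sesquilinear forms, I would instead bound the two pieces of $a(v,w)$ separately. First I would write
\[
|a(v,w)|\le \tfrac{\varepsilon^2}{2}|(\nabla v,\nabla w)|+|(Vv,w)|,
\]
then apply the usual $L^2$ Cauchy--Schwarz to the first term and, using that $V\ge V_{\min}>0$ allows us to view $(Vv,w)=(V^{1/2}v,V^{1/2}w)$, to the second term, obtaining
\[
|a(v,w)|\le \tfrac{\varepsilon^2}{2}\|\nabla v\|\,\|\nabla w\|+\|V^{1/2}v\|\,\|V^{1/2}w\|.
\]
Viewing the right-hand side as the Euclidean inner product of the vectors $\bigl(\tfrac{\varepsilon}{\sqrt{2}}\|\nabla v\|,\|V^{1/2}v\|\bigr)$ and $\bigl(\tfrac{\varepsilon}{\sqrt{2}}\|\nabla w\|,\|V^{1/2}w\|\bigr)$, a second application of Cauchy--Schwarz in $\mathbb{R}^2$ yields
\[
|a(v,w)|\le \Bigl(\tfrac{\varepsilon^2}{2}\|\nabla v\|^2+(Vv,v)\Bigr)^{1/2}\Bigl(\tfrac{\varepsilon^2}{2}\|\nabla w\|^2+(Vw,w)\Bigr)^{1/2}=\|v\|_e\|w\|_e.
\]

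There is no real obstacle here: the only point to be careful about is the complex-valued setting, so I would make sure to carry the complex conjugate through the identities $(Vv,w)=\int V v\bar w$ and $(\nabla v,\nabla w)=\int \nabla v\cdot\overline{\nabla w}$, and to use $V$ being real to write $(Vv,w)=(V^{1/2}v,V^{1/2}w)$ with the $V^{1/2}$ factors absorbed into the complex inner product. With positivity of $V$ guaranteed by the model assumption, the proof is short and essentially mechanical.
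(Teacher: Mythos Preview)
Your proposal is correct; the paper itself omits the proof entirely, stating only that ``it is easy to prove the following lemma,'' and both of your arguments (the abstract Cauchy--Schwarz for positive Hermitian sesquilinear forms, or the split-and-recombine via Cauchy--Schwarz in $\mathbb{R}^2$) are standard ways to fill in this detail.
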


If the stationary problem with $\Hc$ as the differential operator
\begin{equation} \label{eq: steady Schrodinger equation}
\left\{
\begin{aligned}
& \Hc u = f, \quad \bx \in \Omega, \\
& u, D^{\bm{\sigma}}_\bx u \text{ are periodic on } \partial \Omega, |\bm{\sigma}| = 1,
\end{aligned}
\right.
\end{equation}
is considered, where periodic boundary conditions are prescribed and $f \in L^2(\Omega)$, the associated variational problem would be to find $u \in H^1_P(\Omega)$ such that
\begin{equation} \label{eq: variational form of steady Schrodinger}
a(u, v) = (f, v), \quad \forall v \in H^1_P(\Omega).
\end{equation}
By the Lax-Milgram theorem, the variational problem \eqref{eq: variational form of steady Schrodinger} admits a unique solution $u \in H^1_P(\Omega)$ with a stability estimate
\begin{equation}
|| u ||_e \le C_{\text{st}}(\varepsilon, V) || f ||.
\end{equation}

\subsection{Cl\'{e}ment-type interpolation}

Let $\oT_H = \{ T_e \}_{e = 1}^{N_e}$ be some quasi-uniform and shape-regular simplicial finite element meshes \cite{ciarlet2002finite,hackbusch2015hierarchical,hackbusch2017elliptic} of $\Omega$ with mesh size $H$, where $N_e$ is the number of elements. Then for $K$ being the union of some elements in $\oT_H$, the neighbourhood of $K$ can be defined as
\begin{equation}
N(K) = \bigcup_{G \in \oT_H, G \cap K \neq \emptyset} G.
\end{equation}
And for $m \in \mathbb{N}$, $N^{m + 1}(K) = N(N^m(K))$, where $N^0(K) = K$, and $m$ is referred to as the oversampling size. If we define
\begin{equation}
\eta(\bx) = \frac{\text{dist}(\bx, N^{m}(K))}  { \text{dist}(\bx, N^{m}(K)) + \text{dist}(\bx, \Omega \backslash N^{m + 1}(K)) }
\end{equation}
for some $m \in \mathbb{N}$, the shape regularity of $\oT_H$ implies that $H || \nabla \eta ||_{\infty} \le \gamma$, where $\gamma$ is independent of $\varepsilon$, $\delta$, $m$ and $H$. The shape regularity and quasi-uniformness also imply that there exists a constant $C_{\text{ol}}$ independent of $\varepsilon$, $\delta$, $m$ and $H$ \cite{peterseim2017eliminating,peterseim2020computational} such that
\begin{equation} \label{eq: bounded number of overlapping element}
\max \limits_{T \in \oT_H} \text{card} \{ G \in \oT_H | G \subset N(T) \} \le C_{\text{ol}}.
\end{equation}

The first-order conforming finite element space of $\oT_H$ is given by
\begin{equation}
\Phi_H = \{ \phi \in H_P^1(\Omega)~|~\forall T \in \oT_H, \phi|_{T} \text{ is a polynomial of total degree } \le 1 \}.
\end{equation}
Let $\oN_H$ be the set of vertices of $\oT_H$ with repeated vertices due to the periodic boundary conditions removed and $N_H = |\oN_H|$. Then $\Phi_H = \text{span}\{ \phi_j, j = 1, \ldots, N_H \}$, where $\phi_j \in \Phi_H, j = 1, \ldots, N_H$ is the nodal basis satisfying
$\phi_j(\bx_k) = \delta_{jk}$,  $\forall \bx_k \in \oN_H$.
The Cl\'{e}ment-type interpolation operator $I_H$ \cite{brenner2007mathematical,clement1975approximation,scott1990finite} is defined by
\begin{equation}
I_H v = \sum_{j = 1}^{N_H} \alpha_j(v) \phi_j, \quad \forall v \in H_P^1(\Omega),
\end{equation}
where $\alpha_j(v) = \frac{(v, \phi_j)}{(1, \phi_j)}$. Then, the local approximation and stability properties of the interpolation operator $I_H$ \cite{carstensen1999edge} guarantee that there exists a constant $C_{I_H}$ only depending on the shape regularity such that
\begin{equation} \label{eq: local approximation of Clement-type interpolation}
H^{-1} || v - I_H v ||_T + || \nabla( v - I_H v ) ||_T \le C_{I_H} || \nabla v ||_{N(T)}, \quad \forall T \in \oT_H,
\end{equation}
where $ (v , w)_K = \int_K v \bar{w} $ and $|| v ||_K^2 = (v, v)_K$ denote the spatial $L^2$ inner product and spatial $L^2$ norm restricted on $K \subset \Omega$, respectively. Set $W = \ker(I_H)$. Then $H_P^1(\Omega) = \Phi_H \oplus W$ and $(v, w) = 0$, $\forall v \in \Phi_H, w \in W$. We have the following lemma.

\begin{lem} \label{lem: property of clement-type interpolation}
For $v \in W, f \in L^2(\Omega)$, there holds
\begin{equation} \label{eq: convergence of Clement-type interpolation}
(f, v) \le C H || f || || \nabla v ||.
\end{equation}
Moreover, if $f \in H^1(\Omega)$, then
\begin{equation} \label{eq: super convergence of Clement-type interpolation}
(f, v) \le C H^2 || \nabla f || || \nabla v ||.
\end{equation}
\end{lem}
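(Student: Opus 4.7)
The plan is to exploit two facts already established in the excerpt: (i) $v \in W = \ker(I_H)$ means $I_H v = 0$ so $v = v - I_H v$, and hence the local approximation bound \eqref{eq: local approximation of Clement-type interpolation} gives $\|v\|_T \le C_{I_H} H \|\nabla v\|_{N(T)}$ for every $T \in \oT_H$; and (ii) the $L^2$-orthogonality $(g, w) = 0$ for all $g \in \Phi_H$, $w \in W$, which lets us subtract any finite element function from $f$ without changing $(f,v)$.

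For the first estimate \eqref{eq: convergence of Clement-type interpolation}, I would split the inner product elementwise, apply Cauchy--Schwarz on each element, and then use the local bound above:
\begin{equation*}
(f,v) = \sum_{T \in \oT_H} (f, v)_T \le \sum_{T \in \oT_H} \|f\|_T \|v\|_T \le C_{I_H} H \sum_{T \in \oT_H} \|f\|_T \|\nabla v\|_{N(T)}.
\end{equation*}
A discrete Cauchy--Schwarz inequality applied to the sum over $T$, followed by the finite-overlap property \eqref{eq: bounded number of overlapping element} to recombine the patch norms $\|\nabla v\|_{N(T)}$ into the global norm $\|\nabla v\|$, then yields $(f,v) \le C H \|f\| \|\nabla v\|$ with $C = C_{I_H} C_{\mathrm{ol}}^{1/2}$.

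For the super-approximation estimate \eqref{eq: super convergence of Clement-type interpolation}, the trick is to use orthogonality to improve one power of $H$. Since $I_H f \in \Phi_H$ and $v \in W$, we have $(I_H f, v) = 0$, so
\begin{equation*}
(f, v) = (f - I_H f, v) = \sum_{T \in \oT_H} (f - I_H f, v)_T.
\end{equation*}
Now applying \eqref{eq: local approximation of Clement-type interpolation} to both $f$ and $v$ on each element gives $\|f - I_H f\|_T \le C_{I_H} H \|\nabla f\|_{N(T)}$ and $\|v\|_T \le C_{I_H} H \|\nabla v\|_{N(T)}$, producing an extra factor of $H$ compared with the first case. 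The same Cauchy--Schwarz plus finite-overlap argument then closes the estimate at order $H^2$.

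No single step looks genuinely hard: the only thing to be careful about is the finite-overlap bookkeeping, since the patch norms $\|\cdot\|_{N(T)}$ on the right do not trivially reassemble into global norms. The bound \eqref{eq: bounded number of overlapping element} controls how many times each element is counted when $T$ ranges over $\oT_H$, which is exactly what is needed. A minor technical point is that for \eqref{eq: super convergence of Clement-type interpolation} one should check that $I_H f$ is well defined and lies in $\Phi_H$ for $f \in H^1(\Omega)$; the averaged Clément construction $\alpha_j(f)=(f,\phi_j)/(1,\phi_j)$ makes sense already for $f \in L^2(\Omega)$, so this is immediate.
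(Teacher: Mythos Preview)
Your proposal is correct and uses the same ingredients as the paper: $I_H v = 0$, the local approximation property \eqref{eq: local approximation of Clement-type interpolation}, the finite-overlap bound \eqref{eq: bounded number of overlapping element}, and the orthogonality $(I_H f, v)=0$ for the super-approximation. The only cosmetic difference is that the paper applies a single global Cauchy--Schwarz and then bounds $\|v - I_H v\|$ (respectively $\|f - I_H f\|\,\|v - I_H v\|$) globally via the summed local estimates, whereas you decompose elementwise first; the arguments are equivalent.
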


\begin{proof}
Since $v \in W$, then $I_H v = 0$. By \eqref{eq: bounded number of overlapping element} and \eqref{eq: local approximation of Clement-type interpolation}, we have
\begin{equation}
(f, v) = (f, v - I_H v) \le || f || ||v - I_H v|| \le C H || f || || \nabla v ||.
\end{equation}
Note that $(I_H f, v) = 0$. Hence if we further have $f \in H^1(\Omega)$, then
\begin{equation}
(f, v) = (f - I_H f, v - I_H v) \le C H^2 || \nabla f || || \nabla v ||.
\end{equation}
\end{proof}

Moreover, there exists a local right inverse of $I_H$ \cite{henning2015multiscale}, denoted by $I_H^{-1, \text{loc}}: \Phi_H \rightarrow H_P^1(\Omega)$, satisfying
\begin{equation}
I_H(I_H^{-1, \text{loc}} v_H) = v_H,
\end{equation}
\begin{equation}
|| \nabla I_H^{-1, \text{loc}} v_H || \le C_{I_H}^{\prime} || \nabla v_H ||,
\end{equation}
\begin{equation}
\text{supp}(I_H^{-1, \text{loc}} v_H) = \bigcup \{ T \in \oT_H | T \cap \overline{ \text{supp}(v_H) } \neq \emptyset \},
\end{equation}
where $v_H \in \Phi_H$ and $C_{I_H}'$ only depends on the shape regularity.

\subsection{Regularity of the solutions of Schr\"{o}dinger equations with multiscale potentials}

We are interested in studying the dependence of the error estimates of OC MsFEM on the small parameters $\varepsilon, \delta$. Hence we need to study the temporal and spatial regularity of the solution $u$ of the Schr\"{o}dinger equation \eqref{eq: time-dependent Schrodinger equation}. We first study the temporal regularity of $u$.

\begin{lem} \label{lem: temporal regularity}
If $\partial_t^k u(t) \in L^2(\Omega)$ for any $t \in [0, T]$, where $k = 1, 2, 3, 4$, then it holds true that for any $0 \le t \le T$
\begin{equation}
|| \partial_t^k u(t) || \le \frac{C \varepsilon^{k - 2}}{\min\{ \varepsilon^{2k - 2}, \delta^{2k - 2} \}}.
\label{partialktu}
\end{equation}
\end{lem}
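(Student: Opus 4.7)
The plan is to exploit the time-translation invariance of the Schr\"{o}dinger operator (since $V^{\delta}$ does not depend on $t$), use the standard $L^2$ conservation law to transfer the bound from arbitrary $t$ to $t=0$, and then recast $\partial_t^k u(0)$ as a purely spatial object $\mathcal{H}^k u_0$ that can be estimated directly from the regularity assumptions on $u_0$ and $V^{\delta}$.

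First I would differentiate the equation $i\varepsilon\partial_t u = \mathcal{H} u$ in $t$ and observe that, because $\mathcal{H} = -\tfrac{1}{2}\varepsilon^2\Delta+V^{\delta}$ is $t$-independent, each $w := \partial_t^k u$ again solves $i\varepsilon\partial_t w = \mathcal{H} w$, together with the periodic boundary conditions inherited from $u$ and $V^{\delta}$. The self-adjointness of $\mathcal{H}$ on periodic functions then gives the usual conservation $\frac{d}{dt}||w||^2=0$, so $||\partial_t^k u(t)||=||\partial_t^k u(0)||$ for every $t\in[0,T]$ and it suffices to bound the initial value.

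Next, rewriting the equation as $\partial_t u=-\tfrac{i}{\varepsilon}\mathcal{H} u$ and iterating gives $\partial_t^k u(0)=(-i/\varepsilon)^k\mathcal{H}^k u_0$, so the whole problem reduces to proving
$$||\mathcal{H}^k u_0|| \;\le\; \frac{C\,\varepsilon^{2k-2}}{\min\{\varepsilon^{2k-2},\delta^{2k-2}\}},\qquad k=1,2,3,4,$$
since dividing by $\varepsilon^k$ then reproduces exactly \eqref{partialktu}. To achieve this I would expand $\mathcal{H}^k u_0$ by iterating $\mathcal{H}=-\tfrac{\varepsilon^2}{2}\Delta+V^{\delta}$ and applying the Leibniz rule, producing a finite sum of monomials of the schematic form
$$\varepsilon^{2j}\,(D_\bx^{\bm{\alpha}_1}V^{\delta})\cdots(D_\bx^{\bm{\alpha}_l}V^{\delta})\,D_\bx^{\bm{\beta}}u_0,\qquad j+l=k,\;\; |\bm{\beta}|+\sum_{i}|\bm{\alpha}_i|=2j,$$
reflecting that every factor of $\Delta$ contributes exactly two spatial derivatives that are then distributed among the $V^{\delta}$-factors and $u_0$. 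Bounding the $V^{\delta}$-factors in $L^\infty$ with $||D_\bx^{\bm{\sigma}}V^{\delta}||_\infty\le C\delta^{-|\bm{\sigma}|}$ and $||V^{\delta}||_\infty\le V_{\max}$, and $u_0$ in $L^2$ with $||D_\bx^{\bm{\sigma}}u_0||\le C\varepsilon^{-|\bm{\sigma}|}$, each such monomial is dominated by $C\,(\varepsilon/\delta)^{2j-|\bm{\beta}|}$; the pure $\Delta^k$ term ($j=k$, $l=0$) contributes only $C$, while every other (``mixed'') term has $j\le k-1$, hence $2j-|\bm{\beta}|\le 2k-2$, giving the uniform estimate $C\max\{1,(\varepsilon/\delta)^{2k-2}\}=C\varepsilon^{2k-2}/\min\{\varepsilon^{2k-2},\delta^{2k-2}\}$.

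The main obstacle will be the combinatorial bookkeeping in this last step: I need to check that in every monomial produced by the iterated Leibniz expansion the exponent of $1/\delta$ never exceeds $2k-2$ whenever any derivative of $V^{\delta}$ is present, since it is precisely this constraint that prevents an additional factor of $\varepsilon/\delta$ from appearing and so closes the estimate at the claimed order. Because the lemma is restricted to $k\le 4$, this can alternatively be carried out by writing $\mathcal{H}^k u_0$ out explicitly for each $k$ and estimating term by term, which is tedious but entirely routine once the general pattern above has been recognized.
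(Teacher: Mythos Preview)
Your proposal is correct and follows essentially the same approach as the paper: use $L^2$ conservation for $\partial_t^k u$ (which solves the same Schr\"{o}dinger equation since $V^{\delta}$ is time-independent) to reduce to $t=0$, then estimate $\partial_t^k u(0)=(-i/\varepsilon)^k\mathcal{H}^k u_0$ using the regularity assumptions on $u_0$ and $V^{\delta}$. The paper carries out only the case $k=1$ explicitly and then writes ``applying similar procedures to $u_{tt},u_{ttt}$ and $u_{tttt}$'', whereas you spell out the Leibniz-type combinatorics governing the general $\mathcal{H}^k u_0$ expansion; your observation that the pure $\Delta^k$ term contributes $O(1)$ while every mixed term has $j\le k-1$ and hence $(\varepsilon/\delta)$-exponent at most $2k-2$ is exactly what makes those ``similar procedures'' work.
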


\begin{proof}
For $u_t$, taking the time derivative of \eqref{eq: time-dependent Schrodinger equation}, multiplying it with $\bar{u}_t$, integrating it w.r.t. $\bx$ over $\Omega$ and taking the imaginary part, we have
$\frac{d}{dt} || u_t ||^2 = 0$, which implies $|| u_t(t) || = || u_t(0) ||, \forall t \in [0, T]$. For $|| u_t(0) ||$, we have $i \varepsilon u_t(0) = - \frac{\varepsilon^2}{2} \Delta u_0 + V u_0$,
which indicates
\begin{equation}
|| u_t(0) || \le \frac{\varepsilon}{2} || \Delta u_0 || + \frac{1}{\varepsilon} || V u_0 || \le \frac{C}{\varepsilon}.
\end{equation}
Applying similar procedures to $u_{tt}, u_{ttt}$ and $u_{tttt}$, we can obtain the results \eqref{partialktu}.
\end{proof}

Then, we turn to the spatial regularity of $u$.
\begin{lem} \label{lem: spatial regularity}
If $D_\bx^{\bm{\sigma}} u(t) \in L^2(\Omega)$ for any $t \in [0, T]$ and $|\bm{\sigma}| = k$, where $k = 1, 2$, then it holds true that for any $0 \le t \le T$
\begin{equation}
|| D_\bx^{\bm{\sigma}} u(t) || \le \frac{C}{\varepsilon^k \delta^k}.
\label{Dxut}
\end{equation}
\end{lem}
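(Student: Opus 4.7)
The plan is to obtain \eqref{Dxut} by differentiating the Schr\"odinger equation in space and performing the standard $L^2$ energy estimate, inductively in $k = 1, 2$. The key observation is that $D_\bx^{\bm{\sigma}}$ commutes with $\Delta$, so after differentiating \eqref{eq: time-dependent Schrodinger equation} the leading kinetic part and the principal term $V\,D_\bx^{\bm{\sigma}} u$ produce no net contribution once we test against $\overline{D_\bx^{\bm{\sigma}} u}$ and take the imaginary part (both are ``self-adjoint'' contributions). Only the commutator terms, which are lower-order expressions involving derivatives of the potential $V$, remain on the right-hand side, and these carry the $1/\delta$ factors from the assumption $\|D^{\bm{\sigma}}_\bx V\|_\infty \le C/\delta^{|\bm{\sigma}|}$.

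For $k=1$, I would apply $\partial_{x_j}$ to \eqref{eq: time-dependent Schrodinger equation}, multiply by $\overline{\partial_{x_j} u}$, integrate over $\Omega$, and take the imaginary part. Since $\text{Im}\,(V\,\partial_{x_j} u,\, \partial_{x_j} u) = 0$ and $\text{Im}\,(\Delta \partial_{x_j} u,\, \partial_{x_j} u) = 0$ (using periodicity), the only surviving term is $\text{Im}\,((\partial_{x_j}V)\,u,\, \partial_{x_j} u)$, giving
\begin{equation*}
\frac{\varepsilon}{2}\frac{d}{dt}\|\partial_{x_j} u\|^2 \le \|\partial_{x_j} V\|_\infty \, \|u\| \, \|\partial_{x_j} u\| \le \frac{C}{\delta}\|\partial_{x_j} u\|,
\end{equation*}
where $\|u(t)\| = \|u_0\| \le C$ is conserved. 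Cancelling one factor of $\|\partial_{x_j}u\|$ leads to a linear bound on $\frac{d}{dt}\|\partial_{x_j} u\|$, and integrating from $0$ to $t$ together with the initial estimate $\|\partial_{x_j} u_0\| \le C/\varepsilon$ yields $\|\partial_{x_j} u(t)\| \le C/\varepsilon + CT/(\varepsilon\delta) \le C/(\varepsilon\delta)$.

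For $k=2$, I would repeat the procedure after applying $\partial_{x_i}\partial_{x_j}$ to \eqref{eq: time-dependent Schrodinger equation}. Now three commutator terms appear on the right, namely $(\partial_{x_i}\partial_{x_j}V)u$, $(\partial_{x_i}V)\partial_{x_j} u$ and $(\partial_{x_j}V)\partial_{x_i} u$. Using $\|V\|_{W^{2,\infty}} \le C/\delta^2$, the conservation of $\|u\|$, and the already-proved first-order bound $\|\partial_{x_\ell} u\| \le C/(\varepsilon\delta)$, each of these is controlled by $C/(\varepsilon\delta^2)\,\|\partial_{x_i}\partial_{x_j} u\|$. After dividing by $\|\partial_{x_i}\partial_{x_j} u\|$ and the $\varepsilon$ from the time-derivative factor, I get $\frac{d}{dt}\|\partial_{x_i}\partial_{x_j} u\| \le C/(\varepsilon^2\delta^2)$. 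Integrating and using $\|\partial_{x_i}\partial_{x_j} u_0\| \le C/\varepsilon^2$ gives the claimed bound $C/(\varepsilon^2\delta^2)$.

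There is no real obstacle here; the only point requiring some care is the bookkeeping of the commutator $[D_\bx^{\bm{\sigma}}, V]$, i.e.\ making sure that after taking the imaginary part in the energy identity all self-adjoint contributions cancel and only the lower-order terms carrying derivatives of $V$ survive. Periodic boundary conditions (assumed also for $D_\bx^{\bm{\sigma}} u$ with $|\bm{\sigma}| \le 3$) are needed so that integration by parts on $\Omega$ produces no boundary terms when moving $\Delta$ off $D_\bx^{\bm{\sigma}} u$. Once these identities are set up, the argument reduces to an elementary Gr\"onwall-type step on $[0,T]$.
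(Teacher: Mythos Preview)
Your proposal is correct and follows essentially the same argument as the paper: differentiate the equation in space, test against the conjugate of the spatial derivative, take the imaginary part so that the kinetic term and the principal potential term drop out, and bound the surviving commutator contributions using $\|D_\bx^{\bm{\sigma}} V\|_\infty \le C/\delta^{|\bm{\sigma}|}$ together with mass conservation and (for $k=2$) the already established $k=1$ bound. The paper carries this out explicitly for $u_{x_1}$ and then simply states that the $|\bm{\sigma}|=2$ case follows by the same procedure; your write-up spells out the $k=2$ commutator bookkeeping in more detail, but the method is identical.
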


\begin{proof}
For $| \bm{\sigma} | = 1$, it is sufficient to prove \eqref{Dxut} for $u_{x_1}$. We have $|| u(t) || = || u_0 ||$ for any $t \in [0, T]$, which is the conservation of mass. Taking the spatial partial derivative of \eqref{eq: time-dependent Schrodinger equation} w.r.t. $x_1$, multiplying it with $\bar{u}_{x_1}$, integrating it w.r.t. $\bx$ over $\Omega$ and taking the imaginary part, we have
\begin{equation}
\varepsilon \frac{1}{2} \frac{d}{dt} || u_{x_1} ||^2 = \text{Im}( V_{x_1} u, u_{x_1} ).
\end{equation}
Then
\begin{equation}
\frac{1}{2} \frac{d}{dt} || u_{x_1} ||^2 = || u_{x_1} || \frac{d}{dt} || u_{x_1} || \le \frac{1}{\varepsilon} |( V_{x_1} u, u_{x_1} )| \le \frac{1}{\varepsilon \delta} || u || || u_{x_1} ||,
\end{equation}
and hence $\frac{d}{dt} || u_{x_1} || \le \frac{1}{\varepsilon \delta} || u ||$. Therefore, we have
\begin{equation}
|| u_{x_1}(t) || \le || \partial_{x_1} u_0 || + \frac{T}{\varepsilon \delta}|| u_0 || \le \frac{C}{\varepsilon \delta}, \quad \forall t \in [0, T],
\end{equation}
which implies \eqref{Dxut} for $| \bm{\sigma} | = 1$. Then, applying the same procedure to any $D^{\bm{\sigma}}_\bx u$ with $|\bm{\sigma}| = 2$, we obtain the result.
\end{proof}

Furthermore, we have
\begin{lem} \label{lem: temporal spatial regularity}
If $\partial_t^k u(t) \in H^1(\Omega)$ for any $t \in [0, T]$, where $k = 1, 2, 3$, then it holds true that for any $0 \le t \le T$
\begin{equation}
|| \nabla \partial_t^k u(t) || \le \frac{C \varepsilon^{k - 2}}{ \varepsilon \delta \min\{ \varepsilon^{2k - 2}, \delta^{2k - 2} \}}.
\end{equation}
\end{lem}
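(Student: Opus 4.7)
The plan is to extend the argument of Lemma \ref{lem: spatial regularity} to the time-differentiated equation. Since $V$ is independent of $t$, the function $w := \partial_t^k u$ satisfies the same Schr\"odinger equation
$$i \varepsilon \partial_t w = - \tfrac{1}{2} \varepsilon^2 \Delta w + V w$$
on $\Omega$ with periodic boundary conditions, so I can run the spatial-derivative energy argument on $w$ exactly as was done on $u$ in Lemma \ref{lem: spatial regularity}.

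First, I will differentiate this equation in $x_j$, multiply by $\overline{\partial_{x_j} w}$, integrate over $\Omega$, and take the imaginary part. By periodicity the $\Delta$-term becomes real, and since $V$ is real the $V\,\partial_{x_j} w$ piece is also real, so only the commutator $V_{x_j} w$ survives:
$$\tfrac{\varepsilon}{2}\tfrac{d}{dt}\|\partial_{x_j} \partial_t^k u\|^2 = \text{Im}\bigl(V_{x_j}\,\partial_t^k u,\, \partial_{x_j} \partial_t^k u\bigr).$$
Using $\|V_{x_j}\|_\infty \le C/\delta$, Cauchy--Schwarz, and dividing by $\|\partial_{x_j} \partial_t^k u\|$ yields
$$\tfrac{d}{dt}\|\partial_{x_j} \partial_t^k u\| \le \tfrac{C}{\varepsilon\delta}\|\partial_t^k u\|.$$
Substituting the bound from Lemma \ref{lem: temporal regularity} and integrating in time gives
$$\|\partial_{x_j} \partial_t^k u(t)\| \le \|\partial_{x_j} \partial_t^k u(0)\| + \frac{C T \varepsilon^{k-2}}{\varepsilon\delta\,\min\{\varepsilon^{2k-2},\delta^{2k-2}\}}.$$
Summing over $j = 1,\ldots,d$ reduces the lemma to an estimate for $\|\nabla \partial_t^k u(0)\|$.

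Second, I will bound the initial term by using the PDE to recursively express $\partial_t^k u(0)$ as a polynomial in the spatial derivatives of $u_0$ of total order at most $2k$ and in $V$ and its derivatives of total order at most $2k-2$, with prefactors that are powers of $\varepsilon$ arising from each substitution of $\partial_t u = (i\varepsilon/2)\Delta u - (i/\varepsilon) V u$. Applying one more spatial derivative, using the Leibniz rule, and invoking the assumed bounds $\|D_\bx^{\bm{\sigma}} u_0\| \le C/\varepsilon^{|\bm{\sigma}|}$ and $\|D_\bx^{\bm{\sigma}} V\|_\infty \le C/\delta^{|\bm{\sigma}|}$ together with $\varepsilon,\delta \le 1$ shows each term is dominated by $C \varepsilon^{k-2}/(\varepsilon\delta \min\{\varepsilon^{2k-2},\delta^{2k-2}\})$, which is exactly the desired bound.

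The main obstacle will be the bookkeeping in this second step for $k=2,3$: the recursive expansion of $\partial_t^k u(0)$ produces a combinatorial number of terms of the form $\varepsilon^p\, D_\bx^{\bm{\alpha}_1} V \cdots D_\bx^{\bm{\alpha}_r} V \cdot D_\bx^{\bm{\beta}} u_0$, and one has to verify term-by-term that the resulting power of $\varepsilon$ and $\delta$ never exceeds what the denominator $\min\{\varepsilon^{2k-2},\delta^{2k-2}\}$ allows. The verification is routine, handled by splitting on whether $\varepsilon \le \delta$ or $\delta < \varepsilon$ and absorbing spare factors using $\varepsilon,\delta \le 1$, but it is the only place where the specific structure of the right-hand side of the lemma genuinely enters the argument.
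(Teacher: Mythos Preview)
Your proposal is correct and is precisely the ``combination of the proofs of Lemma~\ref{lem: temporal regularity} and Lemma~\ref{lem: spatial regularity}'' that the paper invokes: you apply the spatial energy argument of Lemma~\ref{lem: spatial regularity} to $w=\partial_t^k u$ (which solves the same equation since $V$ is time-independent), feed in the $L^2$ bound on $\partial_t^k u$ from Lemma~\ref{lem: temporal regularity}, and then bound the initial data $\nabla\partial_t^k u(0)$ by the same recursive substitution used for $\|\partial_t^k u(0)\|$ in Lemma~\ref{lem: temporal regularity}. The paper gives no further detail, so your write-up is in fact more explicit than the original.
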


\begin{proof}
By combining the proofs of Lemma  \ref{lem: temporal regularity} and Lemma \ref{lem: spatial regularity}, we can obtain the result.
\end{proof}

\begin{rem} \label{rem: more oscillatory spatial derivatives}
We can see from Lemmas \ref{lem: temporal regularity}, \ref{lem: spatial regularity} and \ref{lem: temporal spatial regularity} that with the emergence of the multiscale potential, i.e. $0 < \delta \ll 1$, the spatial derivatives of the solution become more oscillatory than the time derivatives.
\end{rem}

\section{OC multiscale finite element basis functions for the Schr\"{o}dinger operator}
\label{sec: OC basis}

In this section, the constructions of global and localized multiscale finite element spaces will be introduced. The stationary problem \eqref{eq: steady Schrodinger equation} will be considered. And the projection errors of the solution $u$ of \eqref{eq: steady Schrodinger equation} in both the global and localized multiscale finite element spaces will be deduced. Throughout this paper, a resolution assumption is made.
\begin{asp} \label{asp: resolution assumption}
The mesh size $H$ satisfies $H / \varepsilon \le \left( 2 C_{I_H} \sqrt{C_{\text{ol}} V_{\max}(1 + C_{I_H} C_{\text{ol}} \gamma)} \right)^{-1}$.
\end{asp}

Under Assumption \ref{asp: resolution assumption}, we have a property for the kernel $W$.

\begin{lem}
Under Assumption \ref{asp: resolution assumption}, for any $v \in W$, we have
\begin{equation}
\varepsilon || \nabla v || \le || v ||_e \le C \varepsilon || \nabla v ||.
\end{equation}
\end{lem}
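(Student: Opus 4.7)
The two bounds are of very different flavors, so I would handle them separately.

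For the lower inequality, no use of the kernel structure is needed: since the multiscale potential satisfies $V(\bx) \ge V_{\min} > 0$, the potential term in the energy norm is automatically non-negative, giving
\begin{equation*}
\| v \|_e^2 \;=\; \frac{\varepsilon^2}{2}\| \nabla v \|^2 + (V v, v) \;\ge\; \frac{\varepsilon^2}{2}\| \nabla v \|^2,
\end{equation*}
which yields $\varepsilon \|\nabla v\| \le \sqrt{2}\,\|v\|_e$ (the factor $\sqrt{2}$ being absorbed into the written bound).

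For the upper inequality, the point is to dominate $(Vv,v)$ by the Dirichlet term, and this is where membership $v \in W = \ker(I_H)$ together with Assumption~\ref{asp: resolution assumption} enters. Starting from $\|v\|_e^2 \le \tfrac{\varepsilon^2}{2}\|\nabla v\|^2 + V_{\max}\|v\|^2$, I would reduce matters to bounding $\|v\|^2$ by a constant multiple of $H^2 \|\nabla v\|^2$. Since $I_H v = 0$, the local Cl\'ement estimate \eqref{eq: local approximation of Clement-type interpolation} applied elementwise gives $\|v\|_T \le C_{I_H} H \|\nabla v\|_{N(T)}$ for each $T \in \oT_H$. Squaring, summing over $T$, and invoking the bounded-overlap bound \eqref{eq: bounded number of overlapping element} to pass from $\sum_T \|\nabla v\|_{N(T)}^2$ to $C_{\text{ol}} \|\nabla v\|^2$ then yields
\begin{equation*}
\| v \|^2 \;\le\; C_{I_H}^2\, C_{\text{ol}}\, H^2\, \|\nabla v\|^2 .
\end{equation*}
Plugging this into the $V_{\max}$ term and applying Assumption~\ref{asp: resolution assumption} (which in particular forces $V_{\max} C_{I_H}^2 C_{\text{ol}} H^2 \le \varepsilon^2/4$, up to the extra $(1+C_{I_H}C_{\text{ol}}\gamma)$ factor that will be used later for the decay analysis) absorbs the potential contribution into a fixed fraction of $\tfrac{\varepsilon^2}{2}\|\nabla v\|^2$, giving $\|v\|_e^2 \le C\varepsilon^2\|\nabla v\|^2$.

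\textbf{Main obstacle.} There is no real mathematical difficulty; the only mildly delicate step is making sure the constants line up so that Assumption~\ref{asp: resolution assumption} is precisely what is needed. In particular, the $(1 + C_{I_H}C_{\text{ol}}\gamma)$ factor appearing in the resolution assumption is overkill for this lemma alone and is presumably calibrated for the later exponential decay argument, so here one only uses a weaker consequence of the assumption, namely $V_{\max} C_{I_H}^2 C_{\text{ol}} H^2/\varepsilon^2 \le \text{const} < 1/2$.
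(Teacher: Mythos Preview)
Your proposal is correct and matches the paper's proof essentially step for step. The only cosmetic difference is that the paper cites Lemma~\ref{lem: property of clement-type interpolation} (with $f=v$) to obtain $\|v\| \le C H\|\nabla v\|$, whereas you unpack that lemma and appeal directly to the local Cl\'ement estimate \eqref{eq: local approximation of Clement-type interpolation} plus the overlap bound \eqref{eq: bounded number of overlapping element}; the underlying argument is identical, and your observation that Assumption~\ref{asp: resolution assumption} is stronger than strictly necessary for this lemma is accurate.
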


\begin{proof}
For any $v \in W$, it is easy to see that $\varepsilon || \nabla v || \le || v ||_e$. On the other hand, by Lemma \ref{lem: property of clement-type interpolation},
\begin{equation}
|| v ||_e^2 \le \frac{\varepsilon^2}{2} || \nabla v ||^2 + V_{\max}(v, v) \le C \left( 1 + \frac{H^2}{\varepsilon^2} \right) \varepsilon^2 || \nabla v ||^2 \le C \varepsilon^2 || \nabla v ||^2,
\end{equation}
which completes the proof.
\end{proof}

\subsection{Global multiscale finite element basis functions}

For $j = 1, \ldots, N_H$, the operator-compressed multiscale basis function $\psi_j$ is constructed as the solution of the constrained optimization problem
\begin{equation} \label{eq: global optimization problem}
\begin{aligned}
\min \limits_{\psi \in H_P^1(\Omega)} & \quad a(\psi, \psi), \\
\text{s.t.} & \quad (\psi, \phi_k) = \delta_{jk}, \quad k = 1, \ldots, N_H.
\end{aligned}
\end{equation}

Define $\Psi_H = \text{span}\{ \psi_j, j = 1, \ldots, N_H \}$ as the global multiscale finite element space. We have the following lemma.
\begin{lem} \label{lem: global quasi-orthogonality}
$H_P^1(\Omega) = \Psi_H \oplus W$ and for any $v_H \in \Psi_H$ and $w \in W$,
\begin{equation}
a(v_H, w) = 0.
\end{equation}
\end{lem}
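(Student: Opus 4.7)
The plan is to derive the Euler--Lagrange condition for the constrained energy minimization defining each $\psi_j$, read off the $a$-orthogonality against $W$ from it, and then use the constraint structure to set up the direct sum decomposition explicitly.

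First I would use standard Lagrange multiplier theory. Since the constraints $(\psi,\phi_k) = \delta_{jk}$, $k=1,\dots,N_H$, are linear and the objective $a(\psi,\psi)$ is coercive on $H_P^1(\Omega)$ (noting $a$ is continuous and coercive thanks to $V_{\min} > 0$), the minimizer $\psi_j$ exists uniquely and satisfies first-order conditions of the form
\begin{equation}
a(\psi_j, v) = \sum_{k=1}^{N_H} \lambda_{jk} (v, \phi_k), \quad \forall v \in H_P^1(\Omega),
\end{equation}
for some real multipliers $\lambda_{jk}$. Now restrict the test function to $v = w \in W$. Because $W = \ker(I_H)$ is $L^2$-orthogonal to $\Phi_H$ (this was noted in the excerpt right after defining $W$), we have $(w, \phi_k) = 0$ for every $k$, so the right-hand side vanishes and $a(\psi_j, w) = 0$. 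By linearity, $a(v_H, w) = 0$ for every $v_H \in \Psi_H$ and $w \in W$, which is the orthogonality claim.

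Next I would establish the direct sum. For the intersection, suppose $v \in \Psi_H \cap W$ and write $v = \sum_j c_j \psi_j$. Membership in $W$ means $I_H v = 0$, i.e.\ $\alpha_k(v) = 0$ for all $k$, hence $(v,\phi_k) = \alpha_k(v)(1,\phi_k) = 0$. But the defining constraints give $(v,\phi_k) = \sum_j c_j (\psi_j,\phi_k) = c_k$, so every $c_k=0$ and $v=0$. For the sum, given any $v \in H_P^1(\Omega)$ I would define
\begin{equation}
v_H := \sum_{j=1}^{N_H} (v, \phi_j)\,\psi_j \in \Psi_H,
\end{equation}
and verify $v - v_H \in W$ by checking that $I_H(v - v_H) = 0$. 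Since $(v_H,\phi_k) = \sum_j (v,\phi_j)(\psi_j,\phi_k) = (v,\phi_k)$, both $\alpha_k(v)$ and $\alpha_k(v_H)$ agree for every $k$, so $I_H v = I_H v_H$ and the difference lies in $W$. Combined with the trivial intersection, this gives $H_P^1(\Omega) = \Psi_H \oplus W$.

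The only delicate point is the rigorous derivation of the Euler--Lagrange identity with the correct number of independent multipliers; this requires noting that the constraint map $\psi \mapsto ((\psi,\phi_1),\dots,(\psi,\phi_{N_H}))$ is surjective, which is immediate because $\{\phi_k\}$ is linearly independent in $L^2(\Omega)$. Everything else is routine algebra using the duality between the nodal basis $\{\phi_k\}$ and the multiscale basis $\{\psi_j\}$ built into the constraints. Once the first-order condition is in hand, both the orthogonality and the splitting follow almost immediately.
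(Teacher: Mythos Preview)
Your proof is correct and reaches the same conclusions as the paper. The decomposition argument---defining $v_H = \sum_j (v,\phi_j)\psi_j$ and checking $v - v_H \in W$---is identical to the paper's. For the orthogonality, the paper takes a slightly more hands-on route: rather than invoking Lagrange multipliers, it directly perturbs $\psi_j$ by $\eta w$ for $w \in W$ and real $\eta$, noting that $\psi_j + \eta w$ still satisfies the constraints (precisely because $(w,\phi_k)=0$); setting $g'(0)=0$ for $g(\eta)=a(\psi_j+\eta w,\psi_j+\eta w)$ gives $\mathrm{Re}\,a(\psi_j,w)=0$, and repeating with $i\eta$ yields the imaginary part. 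This sidesteps the constraint-qualification check you flag at the end, since one never leaves the feasible set. Your Lagrange-multiplier formulation is equivalent and arguably more systematic; one small point is that in the complex-valued setting the multipliers $\lambda_{jk}$ should be complex (or, working over the reals, one has $2N_H$ real multipliers for the real and imaginary parts of each constraint), but this does not affect your conclusion since $(w,\phi_k)=0$ for $w\in W$ kills the right-hand side regardless.
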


\begin{proof}
For any nontrivial $w \in W$ and $\eta \in \mathbb{R}$, $\psi_j + \eta w$ satisfies the constraint in the optimization problem \eqref{eq: global optimization problem}, $j = 1, \ldots, N_H$. Then
\begin{equation}
g(\eta) = a(\psi_j + \eta w, \psi_j + \eta w) = \eta^2 a(w, w) + 2 \eta \text{Re } a(\psi_j, w) + a(\psi_j, \psi_j).
\end{equation}
Since $g(\eta)$ achieves the minimum at $\eta = 0$, then $g'(\eta)|_{\eta = 0} = 0$. Hence $\text{Re } a(\psi_j, w) = 0$. Set $\tilde{\eta} = i \eta$ and $\tilde{g}(\eta) = g(\tilde{\eta})$. A similar argument for $\tilde{g}(\eta)$ yields that $\text{Im } a(\psi_j, w) = 0$. Therefore, we have $a(\psi_j, w) = 0$, $j = 1, \ldots, N_H$, i.e.,
\begin{align}
a(v_H, w) = 0, \quad \forall v_H \in \Psi_H, w \in W.
\end{align}
For any $v \in H^1_P(\Omega)$, define $v^* = \sum_{k = 1}^{N_H} (v, \phi_k) \psi_k$. Then $v^* \in \Psi_H$ and
\begin{align}
(v - v^*, \phi_j) = 0, \quad j = 1, \ldots, N_H.
\end{align}
Then $v - v^* \in W$ and hence we have the decomposition $H_P^1(\Omega) = \Psi_H \oplus W$.
\end{proof}

To solve the stationary problem \eqref{eq: steady Schrodinger equation} in $\Psi_H$, the Galerkin method seeks $u_H \in \Psi_H$ such that
\begin{equation} \label{eq: Galerkin approximation in global multiscale space}
a(u_H, v_H) = (f, v_H), \quad \forall v_H \in \Psi_H.
\end{equation}
Then Lemma \ref{lem: global quasi-orthogonality} indicates the following lemma.
\begin{lem}
Assume that $u$ is the solution of \eqref{eq: steady Schrodinger equation} and $u_H$ is the solution of the Galerkin approximation \eqref{eq: Galerkin approximation in global multiscale space} in $\Psi_H$. Then $u - u_H \in W$.
\end{lem}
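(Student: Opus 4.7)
The plan is to exploit the direct sum decomposition $H_P^1(\Omega) = \Psi_H \oplus W$ together with the bilinear $a$-orthogonality between $\Psi_H$ and $W$ that were both established in Lemma~\ref{lem: global quasi-orthogonality}. More precisely, I would apply the decomposition to the exact solution $u$ itself rather than to the error $u - u_H$, writing $u = u^\ast + w$ with $u^\ast \in \Psi_H$ and $w \in W$, and then argue that $u^\ast$ must coincide with the Galerkin solution $u_H$.

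The key step: for any test function $v_H \in \Psi_H$, since $u$ solves the variational problem \eqref{eq: variational form of steady Schrodinger} on all of $H_P^1(\Omega)$, we have $a(u, v_H) = (f, v_H)$. Substituting $u = u^\ast + w$ and using $a(w, v_H) = 0$ (which holds by the $a$-orthogonality part of Lemma~\ref{lem: global quasi-orthogonality}, with the conjugate-linearity of $a$ in the second slot handled as in the proof of that lemma, or equivalently applied with the roles of the two arguments swapped), we obtain
\begin{equation}
a(u^\ast, v_H) = (f, v_H), \quad \forall v_H \in \Psi_H.
\end{equation}
Thus $u^\ast \in \Psi_H$ satisfies exactly the same Galerkin problem \eqref{eq: Galerkin approximation in global multiscale space} as $u_H$.

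To conclude $u^\ast = u_H$ I only need uniqueness of the Galerkin approximation in the finite-dimensional space $\Psi_H$, which follows from coercivity of $a$: testing $a(u^\ast - u_H, u^\ast - u_H) = 0$ with the energy norm gives $\|u^\ast - u_H\|_e = 0$, and since $V \ge V_{\min} > 0$ the energy norm is a genuine norm (equivalent to $\|\cdot\|_{H^1}$), forcing $u^\ast = u_H$. Therefore $u - u_H = w \in W$.

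I do not anticipate a genuine obstacle here, since every ingredient (the splitting, the $a$-orthogonality, and the coercivity coming from $V \ge V_{\min} > 0$) has already been recorded. The only point that requires a line of care is that the complex bilinear form $a$ is sesquilinear, so when I apply the $a$-orthogonality $a(v_H, w) = 0$ from Lemma~\ref{lem: global quasi-orthogonality} I should make sure it is used in the correct slot; if needed I would reprove the symmetric statement $a(w, v_H) = 0$ by the same variational argument used in Lemma~\ref{lem: global quasi-orthogonality} (perturbing $\psi_j$ by $\eta w$ and $i\eta w$), which is essentially free.
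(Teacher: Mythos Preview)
Your proof is correct and essentially the same as the paper's: both rest on the decomposition and $a$-orthogonality of Lemma~\ref{lem: global quasi-orthogonality} together with Galerkin orthogonality $a(u-u_H,v_H)=0$. The paper applies the decomposition directly to the error $u-u_H$ (so that the $\Psi_H$-component is killed by coercivity), whereas you decompose $u$ and identify its $\Psi_H$-component with $u_H$ via uniqueness; the two arguments are equivalent.
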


\begin{proof}
By Lemma \ref{lem: global quasi-orthogonality}, $u - u_H \in W$ since $a(u - u_H, w_H) = 0, \forall w_H \in \Psi_H$.
\end{proof}

We are now in the position to prove the error estimates for $u_H$.
\begin{thm} \label{thm: projection error in global space}
Let $u$ be the solution of \eqref{eq: steady Schrodinger equation} and $u_H$ be the solution of \eqref{eq: Galerkin approximation in global multiscale space}. If $f \in L^2(\Omega)$, then
\begin{equation} \label{eq: energy error in global space}
|| u - u_H ||_e \le C \frac{H}{\varepsilon} || f ||,
\end{equation}
\begin{equation} \label{eq: L2 error in global space}
|| u - u_H || \le C \frac{H^2}{\varepsilon^2} || f ||.
\end{equation}
Moreover, if $f \in H^1(\Omega)$, then
\begin{equation} \label{eq: super convergence of energy error in global space}
|| u - u_H ||_e \le C \frac{H^2}{\varepsilon} || \nabla f ||,
\end{equation}
\begin{equation} \label{eq: super convergence of L2 error in global space}
|| u - u_H || \le C \frac{H^3}{\varepsilon^2} || \nabla f ||.
\end{equation}
\end{thm}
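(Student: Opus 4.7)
The plan is to exploit Galerkin orthogonality together with the fact (from the lemma just preceding the theorem) that $u - u_H \in W = \ker(I_H)$, and then invoke Lemma \ref{lem: property of clement-type interpolation} to convert integrals of $f$ against functions in $W$ into $H$-powers. Four estimates are claimed, but they split into an energy-norm bound and an $L^2$ bound (via duality), each in a standard and super-convergence version.

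First I would handle the energy estimate \eqref{eq: energy error in global space}. Subtracting the variational equations gives the orthogonality $a(u - u_H, v_H) = 0$ for every $v_H \in \Psi_H$. Combined with Lemma \ref{lem: global quasi-orthogonality} (stating $a(v_H, w) = 0$ for $v_H \in \Psi_H, w \in W$), and the fact $u - u_H \in W$, I get
\begin{equation}
\| u - u_H \|_e^2 = a(u - u_H,\, u - u_H) = a(u,\, u - u_H) = (f,\, u - u_H).
\end{equation}
Since $u - u_H \in W$, Lemma \ref{lem: property of clement-type interpolation} yields $(f, u - u_H) \le C H \| f \| \| \nabla(u - u_H) \|$, and the norm-equivalence lemma on $W$ gives $\| \nabla(u - u_H) \| \le C \varepsilon^{-1} \| u - u_H \|_e$. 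Dividing by $\| u - u_H \|_e$ delivers \eqref{eq: energy error in global space}. The super-convergence estimate \eqref{eq: super convergence of energy error in global space} is obtained by exactly the same chain, but invoking the second bound in Lemma \ref{lem: property of clement-type interpolation}, which replaces $CH \| f \|$ by $CH^2 \| \nabla f \|$.

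For the $L^2$ estimates I would use a duality (Aubin–Nitsche) argument. Let $z \in H_P^1(\Omega)$ solve $\Hc z = u - u_H$ in the variational sense, and let $z_H \in \Psi_H$ be its Galerkin approximation in the multiscale space. By the energy estimate already proved applied to $z$, I have $\| z - z_H \|_e \le C H \varepsilon^{-1} \| u - u_H \|$, and by Lemma \ref{lem: global quasi-orthogonality} combined with $u - u_H \in W$,
\begin{equation}
\| u - u_H \|^2 = (u - u_H,\, u - u_H) = a(z,\, u - u_H) = a(z - z_H,\, u - u_H).
\end{equation}
Bounding the right-hand side by $\| z - z_H \|_e \| u - u_H \|_e$ and inserting \eqref{eq: energy error in global space} gives \eqref{eq: L2 error in global space}. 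For the super-convergence $L^2$ bound \eqref{eq: super convergence of L2 error in global space}, I keep the same duality identity but now couple the standard energy estimate for $z$ (one power of $H$) with the super-convergence energy estimate \eqref{eq: super convergence of energy error in global space} for $u - u_H$ (two powers of $H$, since $f \in H^1$), producing the total $H^3/\varepsilon^2$.

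The only non-routine point is ensuring that $u - u_H \in W$ and $z - z_H \in W$ really sit in the kernel $W$ so that Lemma \ref{lem: property of clement-type interpolation} applies; this has already been settled by the preceding lemmas about $\Psi_H \oplus W = H_P^1(\Omega)$ and the orthogonality $a(\Psi_H, W) = 0$. The remaining calculations are routine Cauchy–Schwarz manipulations, and no regularity beyond $f \in L^2$ or $f \in H^1$ needs to be assumed for the solution $u$, which is exactly the strength of the projection-based analysis.
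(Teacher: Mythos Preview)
Your proposal is correct and matches the paper's proof essentially step for step: the energy bound via $u-u_H\in W$, $\|u-u_H\|_e^2=(f,u-u_H)$, and Lemma~\ref{lem: property of clement-type interpolation}; the $L^2$ bound via Aubin--Nitsche with the dual Galerkin solution and the orthogonality $a(\Psi_H,W)=0$; and the super-convergence versions by swapping in the $H^2$ inequality of Lemma~\ref{lem: property of clement-type interpolation}. One cosmetic remark: the inequality $\|\nabla(u-u_H)\|\le C\varepsilon^{-1}\|u-u_H\|_e$ follows directly from the definition of $\|\cdot\|_e$ for any $H^1$ function, so you need not invoke the $W$-specific norm-equivalence lemma there.
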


\begin{proof}
We first consider the case where $f \in L^2(\Omega)$. For the error in the energy norm, since $u - u_H \in W$, then by Lemma \ref{lem: property of clement-type interpolation} we obtain
\begin{align}
|| u - u_H ||_e^2 & = a(u - u_H, u - u_H)  = a(u, u - u_H) \nonumber\\
& = (f, u - u_H) \le C H || f || || \nabla (u - u_H) || \le C \frac{H}{\varepsilon} || f || ||u - u_H||_e.
\end{align}
For the $L^2$ error, the Aubin-Nitsche technique is applied. Let $w \in H^1_P(\Omega)$ be the solution of
\begin{align}
a(w, v) = (u - u_H, v), \forall v \in H^1_P(\Omega)
\end{align}
and $w_H \in \Psi_H$ be the Galerkin approximation of $w$ in $\Psi_H$ satisfying
\begin{align}
a(w_H, v_H) = (u - u_H, v_H), \forall v_H \in \Psi_H.
\end{align}
Then, we easily obtain
\begin{align}
|| u - u_H ||^2 & = a(w, u - u_H)  = a(w - w_H, u - u_H) \nonumber\\
& \le || w - w_H ||_e || u - u_H ||_e  \le C \frac{H^2}{\varepsilon^2} || f || || u - u_H ||,
\end{align}
where in the last equality we have used the estimate \eqref{eq: energy error in global space}. Moreover, if $f \in H^1(\Omega)$, we have by Lemma \ref{lem: property of clement-type interpolation} that
\begin{align}
|| u - u_H ||_e^2 = (f, u - u_H) \le C H^2 || \nabla f || || \nabla (u - u_H) ||.
\end{align}
By repeating the above procedure, we can obtain the results \eqref{eq: super convergence of energy error in global space} and \eqref{eq: super convergence of L2 error in global space}.
\end{proof}

\subsection{Localized multiscale finite element basis functions}

One advantage of using these operator-compressed multiscale basis functions is that these basis functions have the property of exponential decay, which motivates us to use the localized basis functions constructed by a modified constrained energy minimization in practical computations. In this subsection, we will prove the exponential decay of the operator-compressed multiscale basis functions. Then, we will introduce the construction of localized basis functions and prove the projection error in the multiscale finite element space spanned by these localized basis functions.

\subsubsection{Exponential decay of basis functions}

Let $S_j = \text{supp}(\phi_j)$, where $\phi_j$ is the finite element nodal basis. We have the following theorem indicating the exponential decay of basis functions $\psi_j$.
\begin{thm} \label{thm: global exponential decay}
Under Assumption \ref{asp: resolution assumption}, there exists $0 < \beta < 1$ independent of $\varepsilon$, $\delta$, $m$, and $H$ such that for all $j = 1, \ldots, N_H$ and $m \in \mathbb{N}$,
\begin{equation}
|| \nabla \psi_j ||_{\Omega \backslash N^m(S_j)} \le \beta^m || \nabla \psi_j ||.
\end{equation}
\end{thm}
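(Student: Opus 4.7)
The plan is to adapt the cutoff-function / iteration technique standard in local orthogonal decomposition (Målqvist--Peterseim, Henning--Målqvist) to the present constrained-minimization setup, exploiting the quasi-orthogonality $a(\psi_j, w) = 0$ for all $w \in W$ (Lemma \ref{lem: global quasi-orthogonality}) together with the resolution Assumption \ref{asp: resolution assumption}. I would first prove exponential decay in the energy norm $\|\psi_j\|_{e, \Omega\setminus N^m(S_j)}$, then convert to the $\|\nabla\psi_j\|$-norm using $V_{\min}>0$ and the $H/\varepsilon$ smallness.

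First, I would fix $m \ge 3$ (treating small $m$ separately by absorbing constants into $\beta$) and construct a Lipschitz cutoff $\eta\in[0,1]$ with $\eta \equiv 0$ on $N^{m-1}(S_j)$, $\eta \equiv 1$ on $\Omega\setminus N^m(S_j)$, and $\|\nabla\eta\|_\infty \le \gamma/H$ (using the $\eta$-type construction from Section \ref{sec: preliminaries}). Set $v := \eta\,\psi_j$ and $z := I_H^{-1,\mathrm{loc}}(I_H v)$; then $w := v - z$ satisfies $I_H w = 0$, so $w\in W$. By the support property of $I_H^{-1,\mathrm{loc}}$ and the Clément-type interpolation $I_H$, $z$ is supported in a thin buffer annulus around the transition strip $T_m := N^m(S_j)\setminus N^{m-1}(S_j)$ (roughly in $N^{m+1}(S_j)\setminus N^{m-2}(S_j)$).

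Next, I would exploit $0 = a(\psi_j, w) = a(\psi_j, \eta\psi_j) - a(\psi_j, z)$. The LHS splits into $\|\psi_j\|_{e,\Omega\setminus N^m(S_j)}^2$ plus boundary terms that live only on $T_m$ where $\nabla\eta\ne 0$; these are controlled by $\|\psi_j\|_{e,T_m}\cdot \varepsilon\|\psi_j\|_{T_m}(\gamma/H)$, and Assumption \ref{asp: resolution assumption} is precisely what is needed to absorb the $\gamma/H$ factor against $V_{\max}$ into a uniform constant. The RHS is bounded via Cauchy--Schwarz by $\|\psi_j\|_{e,\mathrm{supp}(z)}\|z\|_e$; using the stability $\|\nabla I_H^{-1,\mathrm{loc}} I_H v\|\le C_{I_H}' \|\nabla I_H v\|\le C \|\nabla(\eta\psi_j)\|$, the overlap bound \eqref{eq: bounded number of overlapping element}, and the $W$-type equivalence $\|z\|_e \le C\varepsilon\|\nabla z\|$ (since $I_H z = 0$), the whole RHS is bounded by $C \|\psi_j\|_{e, B_m}^2$ for some buffer annulus $B_m \subset N^{m+1}(S_j)\setminus N^{m-2}(S_j)$.

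Combining the two sides yields, for some constant $C_*$ independent of $\varepsilon, \delta, m, H$,
\begin{equation*}
\|\psi_j\|_{e,\Omega\setminus N^m(S_j)}^2 \le C_*\,\|\psi_j\|_{e, N^{m+1}(S_j)\setminus N^{m-2}(S_j)}^2.
\end{equation*}
Writing $\mu_m := \|\psi_j\|_{e,\Omega\setminus N^m(S_j)}^2$, the right-hand side equals $C_*(\mu_{m-2}-\mu_{m+1})$, which rearranges to the contraction $\mu_{m+1}\le \tfrac{C_*}{C_*+1}\,\mu_{m-2}$. Iterating over steps of size $3$ gives $\mu_m \le \tilde\beta^{2m}\mu_0 = \tilde\beta^{2m}\|\psi_j\|_e^2$ with $\tilde\beta := (C_*/(C_*+1))^{1/6} \in (0,1)$ independent of all parameters. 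Finally, I would convert to the gradient norm: $V_{\min}>0$ gives $\varepsilon^2\|\nabla\psi_j\|_{\Omega\setminus N^m(S_j)}^2 \le 2\mu_m$, while on the global side $\|\psi_j\|_e^2 \le C\varepsilon^2\|\nabla\psi_j\|^2$ follows from $\|\psi_j\|^2 \le C(H^2 + 1)\|\nabla \psi_j\|^2$ once the orthogonal decomposition $\psi_j = \psi_j^{\Phi} + \psi_j^{W}$ (Lemma \ref{lem: global quasi-orthogonality}) is combined with \eqref{eq: local approximation of Clement-type interpolation} and Assumption \ref{asp: resolution assumption}, yielding the claim with $\beta := \tilde\beta$ (adjusting by an absolute constant, absorbed by using a slightly smaller $\beta$ and a starting index).

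The main obstacle is Step 5/6: tracking constants so that the resolution Assumption \ref{asp: resolution assumption} is exactly strong enough to absorb the cutoff-gradient factor $\gamma/H$ against $\sqrt{V_{\max}}$ (this is why the assumption takes its specific form involving $C_{I_H}$, $C_{\mathrm{ol}}$, $V_{\max}$, and $\gamma$), and doing so uniformly so that the final rate $\beta$ depends on none of $\varepsilon,\delta,m,H$. A secondary technical point is the final passage from the energy-norm decay to the gradient-norm decay without introducing $\varepsilon$-dependence into $\beta$.
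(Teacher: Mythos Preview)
Your overall strategy---cutoff function, correction by $I_H^{-1,\mathrm{loc}}\circ I_H$ to land in $W$, use of $a(\psi_j,w)=0$, then an annulus estimate and iteration---is exactly what the paper does. The difference is that you run the argument in the energy norm and convert at the end, whereas the paper works throughout in the bare gradient seminorm. That difference is not cosmetic: it is where your proposal breaks.

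Two concrete issues. First, a minor one: your parenthetical ``since $I_H z = 0$'' is false; $I_H z = I_H(\eta\psi_j)\neq 0$ in general. This is repairable, because $z$ is still supported in the annulus and its $L^2$ part can be bounded using that $I_H\psi_j$ vanishes away from $S_j$ (as $(\psi_j,\phi_k)=\delta_{jk}$) together with \eqref{eq: local approximation of Clement-type interpolation} and the resolution assumption.

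The real gap is the last conversion. From $\mu_m\le\tilde\beta^{2m}\|\psi_j\|_e^2$ you get
\[
\varepsilon^2\|\nabla\psi_j\|_{\Omega\setminus N^m(S_j)}^2 \le 2\tilde\beta^{2m}\|\psi_j\|_e^2,
\]
and you then need $\|\psi_j\|_e^2\le C\varepsilon^2\|\nabla\psi_j\|^2$ with $C$ independent of $\varepsilon$. Your justification ``$\|\psi_j\|^2\le C(H^2+1)\|\nabla\psi_j\|^2$'' only yields $\|\psi_j\|_e^2\le C(\varepsilon^2+H^2+1)\|\nabla\psi_j\|^2$, i.e.\ a constant, not $\varepsilon^2$; the resulting prefactor $C/\varepsilon^2$ in front of $\tilde\beta^{2m}$ cannot be absorbed into a parameter-free $\beta$. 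Note $\psi_j\notin W$, so the kernel Poincar\'e bound does not apply to it globally.

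The paper sidesteps this entirely: it starts from $\|\nabla\psi_j\|_{\Omega\setminus N^m(S_j)}^2\le(\nabla\psi_j,\eta\nabla\psi_j)$ and, after inserting $w=\eta\psi_j-I_H^{-1,\mathrm{loc}}(I_H(\eta\psi_j))\in W$, uses $a(\psi_j,w)=0$ in the form $|(\nabla\psi_j,\nabla w)|\le \tfrac{2V_{\max}}{\varepsilon^2}|(\psi_j,w)|$. Then the crucial observation is that away from $S_j$ one has $I_H\psi_j=0$ \emph{locally}, so $(\psi_j,w)=(\psi_j-I_H\psi_j,w-I_Hw)$ and two applications of \eqref{eq: local approximation of Clement-type interpolation} produce an $H^2$ that exactly cancels the $1/\varepsilon^2$ under Assumption~\ref{asp: resolution assumption}. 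This is the step you should import; once the iteration is phrased in gradient norms no final conversion is needed.
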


\begin{proof}
The proof is based on an iterative Caccioppoli-type argument \cite{hou2017sparse,Owhadi:2017,peterseim2017eliminating}. In this proof, we fix the index $j$ and omit $j$ for $\psi_j$ and $S_j$ for brevity of notations. Assume $m \ge 7$. Define the cutoff function
\begin{align}
\eta = \frac{\text{dist}(\bx, N^{m - 4}(S))}{\text{dist}(\bx, N^{m - 4}(S)) + \text{dist}(\bx, \Omega \backslash N^{m - 3}(S))}.
\end{align}
Then $\eta = 0$ in $N^{m - 4}(S)$, $\eta = 1$ in $\Omega \backslash N^{m - 3}(S)$ and $0 \le \eta \le 1$ in $N^{m - 3}(S) \backslash N^{m - 4}(S)$. Moreover, $H || \nabla \eta ||_{\infty} \le \gamma$ and $\mathcal{R} := \text{supp}(\nabla \eta) = N^{m - 3}(S) \backslash N^{m - 4}(S)$. Then, we obtain that
\begin{align}
|| \nabla \psi ||_{\Omega \backslash N^m(S)}^2 & \le (\nabla \psi, \eta \nabla \psi)  = (\nabla \psi, \nabla(\eta \psi)) - (\nabla \psi, \psi \nabla \eta) \nonumber\\
& \le | (\nabla \psi, \nabla( \eta \psi - I^{-1,\text{loc}}_H(I_H(\eta \psi)) )) | + | (\nabla \psi, \nabla I^{-1,\text{loc}}_H(I_H(\eta \psi))) | + | (\nabla \psi, \psi \nabla \eta) | \nonumber\\
& = M_1 + M_2 + M_3,
\end{align}
where $M_1  = | (\nabla \psi, \nabla( \eta \psi - I^{-1,\text{loc}}_H(I_H(\eta \psi)) )) |$,
$M_2  = | (\nabla \psi, \nabla I^{-1,\text{loc}}_H(I_H(\eta \psi))) |$, and $M_3  = | (\nabla \psi, \psi \nabla \eta) |$.  For $M_1$, note that $w = \eta \psi - I^{-1,\text{loc}}_H(I_H(\eta \psi)) \in W$, which implies $a(\psi, w) = 0$ and that $\text{supp}(w) \subset \Omega \backslash N^{m - 6}(S)$, $\text{supp}( I_H (\eta \psi) ) = N^{m - 2}(S) \backslash N^{m - 5}(S)$, $\text{supp}( \eta \psi - I_H(\eta \psi) ) \subset \Omega \backslash N^{m - 5}(S)$, and $\text{supp}( I_H(\eta \psi) - I^{-1,\text{loc}}_H(I_H(\eta \psi)) ) \subset N^{m - 1}(S) \backslash N^{m - 6}(S)$. Hence
\begin{align}
M_1 & \le \frac{2 V_{\max}}{\varepsilon^2} | (\psi, w) | = \frac{2 V_{\max}}{\varepsilon^2} | (\psi - I_H \psi, \eta \psi - I_H(\eta \psi) + I_H(\eta \psi) - I^{-1,\text{loc}}_H(I_H(\eta \psi)) ) | \nonumber\\
& \le \frac{2 V_{\max}}{\varepsilon^2} \left( | ( \psi - I_H \psi, \eta \psi - I_H(\eta \psi) ) | + | (\psi - I_H \psi, I_H(\eta \psi) - I^{-1,\text{loc}}_H(I_H(\eta \psi)) ) | \right) \nonumber\\
& \le 2 V_{\max} C_{I_H}^2 C_{\text{ol}} \frac{H^2}{\varepsilon^2} || \nabla \psi ||_{\Omega \backslash N^{m - 6}(S)} || \nabla(\eta \psi) ||_{\Omega \backslash N^{m - 6}(S)} \nonumber\\
& + 2 V_{\max} C_{I_H}^3 C_{I_H}^{\prime} C_{\text{ol}} \frac{H^2}{\varepsilon^2} || \nabla \psi ||_{N^{m}(S) \backslash N^{m - 7}(S)} || \nabla( \eta \psi ) ||_{N^{m}(S) \backslash N^{m - 7}(S)}.
\end{align}
Also note that $\mathcal{R} \cap \text{supp}(I_H \psi) = \emptyset$ and hence
\begin{align}
|| \psi \nabla \eta ||_{\mathcal{R}} = || (\psi - I_H \psi) \nabla \eta ||_{\mathcal{R}} \le C_{I_H} C_{\text{ol}} H || \nabla \eta ||_{\infty} || \nabla \psi ||_{N(\mathcal{R})} \le C_{I_H} C_{\text{ol}} \gamma || \nabla \psi ||_{N(\mathcal{R})}.
\end{align}
Thus, under Assumption \ref{asp: resolution assumption}, we arrive at
\begin{align}
M_1 \le \frac{1}{2} || \nabla \psi ||_{\Omega \backslash N^m(S)}^2 + C || \nabla \psi ||_{N^m(S) \backslash N^{m - 7}(S)}^2.
\end{align}
Using a similar argument, we have
\begin{align}
M_2 & \le C || \nabla \psi ||_{N^{m - 1}(S) \backslash N^{m - 6}(S)} || \nabla(\eta \psi) ||_{N^{m - 1}(S) \backslash N^{m - 6}(S)} \le C || \nabla \psi ||_{N^{m}(S) \backslash N^{m - 7}(S)}^2, \\
M_3 & \le C || \nabla \psi ||_{N^{m}(S) \backslash N^{m - 7}(S)}^2.
\end{align}
Combining the above estimates, we get that
\begin{align}
\frac{1}{2} || \nabla \psi ||_{\Omega \backslash N^m(S)}^2 \le C_1 || \nabla \psi ||_{N^m(S) \backslash N^{m - 7}(S)}^2,
\end{align}
where $C_1$ is independent of $\varepsilon, \delta, m$ and $H$. This leads to
\begin{align}
|| \nabla \psi ||_{\Omega \backslash N^m(S)}^2 \le \frac{C_1}{C_1 + \frac{1}{2}} || \nabla \psi ||_{\Omega \backslash N^{m - 7}(S)}^2,
\end{align}
which implies
\begin{align}
|| \nabla \psi ||_{\Omega \backslash N^m(S)}^2 \le \left( \frac{C_1}{C_1 + \frac{1}{2}} \right)^{\lfloor \frac{m}{7} \rfloor} || \nabla \psi ||^2.
\end{align}
Therefore, we prove that the basis functions $\psi_j$'s have the exponential decay property.
\end{proof}

\subsubsection{Localized basis functions}

Motivated by the exponential decay of the multiscale basis functions, we can construct the localized basis function $\psi_j^{\text{loc}, m}$ by solving the modified constrained optimization problem
\begin{equation} \label{eq: modified optimization problem}
\begin{aligned}
\min \limits_{\psi \in H_P^1(\Omega)} & \quad a(\psi, \psi), \\
\text{s.t.} & \quad (\psi, \phi_k) = \delta_{jk}, \quad k = 1, \ldots, N_H, \\
& \quad \psi(\bx) = 0, \quad \text{for } \bx \in \Omega \backslash N^m(S_j)
\end{aligned}
\end{equation}
for $j = 1, \ldots, N_H$ and $m \in \mathbb{N}$. Let $W(N^m(S_j)) = \{ w \in W | w = 0 \text{ in } \Omega \backslash N^m(S_j) \}$. Following the proof of Lemma \ref{lem: global quasi-orthogonality}, we can prove the following lemma.
\begin{lem}
It holds true that for $j = 1, \ldots, N_H$
\begin{equation}
a(\psi_j^{\text{loc}, m}, w) = 0, \quad \forall w \in W(N^m(S_j)).
\end{equation}
\end{lem}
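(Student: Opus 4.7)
The plan is to mirror the proof of Lemma \ref{lem: global quasi-orthogonality} (global quasi-orthogonality), exploiting the first-order optimality condition for the constrained energy minimization \eqref{eq: modified optimization problem}. The one extra thing to verify is that admissible perturbations along directions in $W(N^m(S_j))$ still respect both the nodal constraint and the support constraint.

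First I would fix an index $j$ and pick an arbitrary nontrivial $w \in W(N^m(S_j))$. Because $w \in W = \ker(I_H)$ and the nodal basis functions $\phi_k$ are linearly independent, the definition $I_H w = \sum_k \alpha_k(w)\phi_k$ with $\alpha_k(w) = (w,\phi_k)/(1,\phi_k)$ forces $(w,\phi_k)=0$ for every $k=1,\ldots,N_H$. Combined with $w \equiv 0$ on $\Omega \setminus N^m(S_j)$ and the fact that $\psi_j^{\text{loc},m}$ itself vanishes on $\Omega \setminus N^m(S_j)$, this shows that for every $\eta \in \mathbb{R}$ the perturbation $\psi_j^{\text{loc},m} + \eta w$ lies in the feasible set of \eqref{eq: modified optimization problem}: the nodal values at the mesh points are preserved, and the support remains inside $N^m(S_j)$.

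Next I would introduce $g(\eta) = a(\psi_j^{\text{loc},m} + \eta w, \psi_j^{\text{loc},m} + \eta w)$, which expands, using sesquilinearity, as
\begin{equation}
g(\eta) = \eta^2 a(w,w) + 2 \eta \, \mathrm{Re}\, a(\psi_j^{\text{loc},m}, w) + a(\psi_j^{\text{loc},m}, \psi_j^{\text{loc},m}).
\end{equation}
Since $\psi_j^{\text{loc},m}$ is the minimizer, $g$ attains its minimum at $\eta=0$, hence $g'(0)=0$ yields $\mathrm{Re}\, a(\psi_j^{\text{loc},m}, w)=0$. Repeating the computation with the purely imaginary perturbation $\eta \mapsto i\eta$ and differentiating $\tilde{g}(\eta)=g(i\eta)$ at $\eta=0$ gives $\mathrm{Im}\, a(\psi_j^{\text{loc},m}, w)=0$. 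Taken together, $a(\psi_j^{\text{loc},m}, w)=0$ as claimed.

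There is essentially no hard step here; the only subtlety, which is also the main place where this proof differs in spirit from the global counterpart, is confirming the feasibility of $\psi_j^{\text{loc},m} + \eta w$ with respect to the additional support constraint in \eqref{eq: modified optimization problem}. This is precisely why the test space must be the smaller subspace $W(N^m(S_j))$ rather than all of $W$: only those kernel elements supported in $N^m(S_j)$ generate admissible perturbations of the localized minimizer, so the quasi-orthogonality can only be claimed against them.
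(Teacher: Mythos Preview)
Your proof is correct and follows exactly the approach the paper intends: the paper simply states that the lemma is proved by following the proof of Lemma~\ref{lem: global quasi-orthogonality}, and your write-up does precisely that, with the added (and appropriate) explicit verification that perturbations by elements of $W(N^m(S_j))$ remain feasible for the localized optimization problem~\eqref{eq: modified optimization problem}.
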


Define $\Psi_{H, m} = \text{span}\{\psi_j^{\text{loc}, m}, j = 1, \ldots, N_H\}$ as the localized multiscale finite element space. Before we study the projection error in $\Psi_{H,m}$, a lemma on the bound of $|| \nabla \psi_j ||$ is needed.
\begin{lem} \label{lem: bound of global basis}
Under Assumption \ref{asp: resolution assumption}, it holds true that for $j = 1, \ldots, N_H$
\begin{equation}
|| \nabla \psi_j || \le C H^{-\frac{3}{2}d}.
\end{equation}
\end{lem}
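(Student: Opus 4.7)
The plan is to exploit the minimization principle that defines $\psi_j$. Since $\psi_j$ minimizes $a(\cdot,\cdot)$ over all $\psi \in H_P^1(\Omega)$ subject to the biorthogonality constraints $(\psi,\phi_k)=\delta_{jk}$ for $k=1,\dots,N_H$, every feasible test function $\tilde\psi_j$ gives an upper bound $a(\psi_j,\psi_j)\le a(\tilde\psi_j,\tilde\psi_j)$. Combined with the elementary inequality $\tfrac{\varepsilon^2}{2}||\nabla\psi_j||^2 \le a(\psi_j,\psi_j)$, the task reduces to exhibiting a feasible competitor whose energy is of order $\varepsilon^2 H^{-d-2}$.

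For the competitor I would take the $L^2$-dual basis in $\Phi_H$, namely $\tilde\psi_j = \sum_{l=1}^{N_H}(M^{-1})_{jl}\phi_l$, where $M_{kl} := (\phi_k,\phi_l)$ is the finite-element mass matrix of the nodal basis. A one-line matrix calculation gives $(\tilde\psi_j,\phi_k) = (M M^{-1}e_j)_k = \delta_{jk}$, so $\tilde\psi_j$ satisfies the constraints and lies in $\Phi_H \subset H_P^1(\Omega)$. To bound its energy I would invoke the classical spectral estimates for $P_1$ elements on quasi-uniform shape-regular meshes: the mass matrix satisfies $M \simeq H^d\mathrm{I}$, and the stiffness matrix $S_{kl} := (\nabla\phi_k,\nabla\phi_l)$ obeys $v^{\top} S v \le C H^{d-2}||v||_{\ell^2}^2$. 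Consequently $||\tilde\psi_j||^2 = (M^{-1})_{jj} \le C H^{-d}$ and $||\nabla\tilde\psi_j||^2 \le ||S||_2 \cdot ||M^{-1}e_j||_{\ell^2}^2 \le C H^{-d-2}$.

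Adding the two contributions yields $a(\tilde\psi_j,\tilde\psi_j) \le \tfrac{C}{2}\varepsilon^2 H^{-d-2} + C V_{\max} H^{-d}$. Under Assumption \ref{asp: resolution assumption}, $H/\varepsilon$ is bounded above by a constant, so $H^{-d} \le C\varepsilon^2 H^{-d-2}$ and the second term is absorbed into the first; hence $a(\psi_j,\psi_j) \le C\varepsilon^2 H^{-d-2}$. Dividing by $\varepsilon^2/2$ yields $||\nabla\psi_j||^2 \le C H^{-d-2}$, and since $(d+2)/2 \le 3d/2$ for every $d\ge 1$, the claimed estimate $||\nabla\psi_j|| \le C H^{-3d/2}$ follows (for $H$ bounded above, which is automatic here).

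The only nontrivial ingredient is the pair of spectral estimates for $M$ and $S$; these are classical inverse and equivalence estimates for quasi-uniform $P_1$ finite elements and can be taken from the references cited in the paper or, alternatively, re-derived by an element-wise scaling argument on the reference simplex. Everything else is routine: the use of the minimization principle, the absorption of the lower-order $V_{\max}H^{-d}$ term under Assumption \ref{asp: resolution assumption}, and the conversion between the energy norm and the $H^1$-seminorm bounds.
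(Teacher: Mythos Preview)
Your argument is correct and in fact sharper and more elementary than the paper's. You obtain $\|\nabla\psi_j\|\le CH^{-(d+2)/2}$, which coincides with the stated bound $H^{-3d/2}$ only when $d=1$ and is strictly better for $d\ge 2$.

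The paper takes a different route. It introduces the $a$-orthogonal projection $P:H^1_P(\Omega)\to W$, shows that the functions $\hat\psi_k:=P\phi_k-\phi_k$ form a basis of $\Psi_H$, and expands $\psi_j=\sum_k\alpha_k^{(j)}\hat\psi_k$. The coefficient vector is then identified as $\alpha^{(j)}=-M^{-1}e_j$ via the constraints, and the estimate is obtained from the \emph{entrywise} bound $|(M^{-1})_{jk}|\le CH^{-d}$ together with $\|\nabla\hat\psi_k\|\le CH^{d/2}$, summed over all $N_H\sim H^{-d}$ indices. This triangle-inequality step is where a factor is lost relative to your bound. By contrast, you never leave the variational formulation: the minimization principle hands you $a(\psi_j,\psi_j)\le a(\tilde\psi_j,\tilde\psi_j)$ for free, and the $L^2$-dual basis $\tilde\psi_j=\sum_l(M^{-1})_{jl}\phi_l$ is the most natural feasible competitor. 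The spectral bounds $\lambda_{\min}(M)\gtrsim H^d$ and $\lambda_{\max}(S)\lesssim H^{d-2}$ you invoke are the standard mass-equivalence and inverse estimates for quasi-uniform $P_1$ elements, so nothing exotic is required. Your absorption of the potential term via Assumption~\ref{asp: resolution assumption} is clean, and the final remark that $(d+2)/2\le 3d/2$ for $d\ge 1$ (with $H$ bounded, which holds since $\Omega$ is bounded) closes the argument.
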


\begin{proof}
Define the operator $P$ as for any $v \in H^1_P(\Omega)$, $P v \in W$ and
\begin{align}
a(Pv, w) = a(v, w), \quad \forall w \in W.
\end{align}
By Lax-Milgram theorem, $P$ is well defined and $|| Pv ||_e \le || v ||_e$. Let $\hat{\psi}_j = P \phi_j - \phi_j$. Then $\hat{\psi}_j \in \Psi_H$ since $P \hat{\psi}_j = 0$, $j = 1, \ldots, N_H$. We know that $\{ \hat{\psi}_j \}_{j = 1}^{N_H}$ spans $\Psi_H$ since $\hat{\psi}_j$'s are linearly independent. Therefore
\begin{align}
\psi_j = \sum_{k = 1}^{N_H} \alpha_k^{(j)} (P \phi_k - \phi_k).
\end{align}
Note that $(\psi_j, \phi_{\ell}) = \delta_{j,\ell}$. Then $\sum_{k = 1}^{N_H} \alpha_k^{(j)}(\phi_k, \phi_{\ell}) = - \delta_{j, \ell}$. So if we let $\alpha^{(j)} = (\alpha_1^{(j)}, \ldots, \alpha_{N_H}^{(j)})$, then $M \alpha^{(j)} = - e_j$, where $e_j$ is a column vector with the $j$-th entry as $1$ and other entries as 0 and $M$ is the mass matrix with entries $M_{j,k} = (\phi_j, \phi_k)$. From the results in \cite{hackbusch2015hierarchical,hackbusch2017elliptic}, we know that $| (M^{-1})_{j,k} | \le C H^{-d}$.
Therefore, under Assumption \ref{asp: resolution assumption}, we have
\begin{align}
|| \nabla \psi_j || \le \sum_{k = 1}^{N_H} \alpha_k^{(j)} || \nabla(P \phi_k - \phi_k) || \le C N_H H^{-\frac{d}{2}} \le C H^{-\frac{3}{2} d}.
\end{align}
\end{proof}
We also need two lemmas on the difference between $\psi_j$ and $\psi_j^{\text{loc}, m}$.
\begin{lem}
Under Assumption \ref{asp: resolution assumption}, for $j = 1, \ldots, N_H$, we have
\begin{equation}
|| \nabla(\psi_j - \psi_j^{\text{loc}, m}) || \le C H^{-\frac{3}{2}d} \beta^m.
\end{equation}
\end{lem}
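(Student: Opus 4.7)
The plan is to construct a localized function $\hat\psi_j$ that is admissible for the constrained problem \eqref{eq: modified optimization problem} and is exponentially close to $\psi_j$, then compare $\hat\psi_j$ to the true minimizer $\psi_j^{\text{loc},m}$ via a Céa-type argument. The final constant $H^{-3d/2}$ will come in at the end through Lemma \ref{lem: bound of global basis}.

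First I would introduce a smooth cutoff $\eta$ with $\eta=1$ on $N^{m-4}(S_j)$ and $\eta=0$ outside $N^{m-3}(S_j)$, so that $H\|\nabla\eta\|_\infty\le\gamma$. I would then define
\begin{equation*}
\hat\psi_j = \eta\psi_j + g, \qquad g := I_H^{-1,\text{loc}}\bigl(I_H((1-\eta)\psi_j)\bigr).
\end{equation*}
The verification of admissibility has two parts. For the constraint $(\hat\psi_j,\phi_k)=\delta_{jk}$, I use the defining identity $(g,\phi_k)=((1-\eta)\psi_j,\phi_k)$ coming from $I_H(g)=I_H((1-\eta)\psi_j)$, which reduces $(\hat\psi_j,\phi_k)$ to $(\psi_j,\phi_k)=\delta_{jk}$. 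For the support, the key observation is that $((1-\eta)\psi_j,\phi_k)=(\psi_j,\phi_k)=0$ whenever $\text{supp}(\phi_k)$ lies outside $N^{m-3}(S_j)$ (since $1-\eta\equiv 1$ there and $k\ne j$ for such faraway $k$); hence $I_H((1-\eta)\psi_j)$ is supported in a shell near layer $m-3$, and the one-element expansion coming from $I_H^{-1,\text{loc}}$ keeps $g\subset N^m(S_j)$, provided the buffer $c=4$ is chosen generously enough.

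Next I would estimate $\|\nabla(\hat\psi_j-\psi_j)\|=\|\nabla(g-(1-\eta)\psi_j)\|$. Using $\nabla((1-\eta)\psi_j)=-\psi_j\nabla\eta+(1-\eta)\nabla\psi_j$, together with $\|\nabla\eta\|_\infty\le\gamma/H$ and the Clément estimate \eqref{eq: local approximation of Clement-type interpolation} applied to $\psi_j-I_H\psi_j$ (noting that $I_H\psi_j$ is supported in $S_j$, hence vanishes on the shell $\text{supp}(\nabla\eta)$ once $m\ge 5$), the $H^{-1}$ blowup of $\nabla\eta$ cancels and we obtain $\|\nabla((1-\eta)\psi_j)\|\le C\|\nabla\psi_j\|_{\Omega\setminus N^{m-5}(S_j)}$. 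The stability of $I_H$ and $I_H^{-1,\text{loc}}$ gives $\|\nabla g\|\le C\|\nabla((1-\eta)\psi_j)\|$. Invoking the exponential decay Theorem \ref{thm: global exponential decay} then yields $\|\nabla(\hat\psi_j-\psi_j)\|\le C\beta^m\|\nabla\psi_j\|$.

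Finally I pass from $\hat\psi_j$ to $\psi_j^{\text{loc},m}$ by a short orthogonality argument. Set $e=\psi_j^{\text{loc},m}-\hat\psi_j\in W(N^m(S_j))$ (both functions satisfy the same linear constraints and are supported in $N^m(S_j)$). By the preceding lemma $a(\psi_j^{\text{loc},m},e)=0$, and by Lemma \ref{lem: global quasi-orthogonality} $a(\psi_j,e)=0$ since $e\in W$. Therefore
\begin{equation*}
\|e\|_e^2 = -a(\hat\psi_j-\psi_j,e) \le \|\hat\psi_j-\psi_j\|_e\,\|e\|_e,
\end{equation*}
so $\|\psi_j-\psi_j^{\text{loc},m}\|_e\le 2\|\hat\psi_j-\psi_j\|_e$. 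Since both differences lie in $W$, the norm equivalence from the previous lemma converts this back to the $\|\nabla\cdot\|$ norm with the $\varepsilon$'s cancelling, and Lemma \ref{lem: bound of global basis} supplies $\|\nabla\psi_j\|\le CH^{-3d/2}$, giving the asserted bound.

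The main obstacle is bookkeeping: making sure that the two consecutive operators $I_H$ and $I_H^{-1,\text{loc}}$ do not widen the support of the correction $g$ beyond $N^m(S_j)$, and verifying that $I_H((1-\eta)\psi_j)$ genuinely vanishes on the far-field (so the correction is localized in a thin shell near the cutoff). This is what forces the choice of the buffer size $c=4$ and drives the use of the constraint $(\psi_j,\phi_k)=\delta_{jk}$ at faraway nodes. Everything else is either standard interpolation estimates, the exponential decay from Theorem \ref{thm: global exponential decay}, or the bound from Lemma \ref{lem: bound of global basis}.
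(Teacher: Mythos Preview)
Your proposal is correct and follows essentially the same route as the paper: construct a localized competitor via a cutoff of $\psi_j$ plus a Cl\'ement/inverse-Cl\'ement correction, use the orthogonality of both $\psi_j$ and $\psi_j^{\text{loc},m}$ against $W(N^m(S_j))$ for a C\'ea-type bound, and finish with Theorem~\ref{thm: global exponential decay} and Lemma~\ref{lem: bound of global basis}. The only organizational difference is that the paper first shifts by $I_H^{-1,\text{loc}}(I_H\psi_j)$ to work entirely in $W$ (setting $\tilde\psi_j=\psi_j-I_H^{-1,\text{loc}}(I_H\psi_j)$ and choosing $w=\eta\tilde\psi_j-I_H^{-1,\text{loc}}(I_H(\eta\tilde\psi_j))$ as the competitor), whereas you build an admissible $\hat\psi_j$ for \eqref{eq: modified optimization problem} directly and then pass to $W$ only at the final norm-equivalence step; the underlying computations and the use of $(\psi_j,\phi_k)=\delta_{jk}$ to kill the far-field coefficients of $I_H((1-\eta)\psi_j)$ are the same.
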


\begin{proof}
Let $m \ge 6$ and $\tilde{\psi}_j = \psi_j - I^{-1,\text{loc}}_H( I_H \psi_j )$ and $\tilde{\psi}_j^{\text{loc}, m} = \psi_j^{\text{loc}, m} - I^{-1,\text{loc}}_H ( I_H \psi_j^{\text{loc}, m} )$. Then
$\psi_j - \psi_j^{\text{loc}, m} = \tilde{\psi}_j - \tilde{\psi}_j^{\text{loc}, m}$ since $I_H \psi_j = I_H \psi_j^{\text{loc}, m} = \phi_j / (1, \phi_j)$. In addition, $\tilde{\psi}_j \in W$ and $\tilde{\psi}_j^{\text{loc}, m} \in W(N^m(S_j))$. Then $\forall w \in W(N^m(S_j))$,
\begin{align}
\varepsilon^2 || \nabla( \tilde{\psi}_j - \tilde{\psi}_j^{\text{loc}, m} ) ||^2 & \le a(\tilde{\psi}_j - \tilde{\psi}_j^{\text{loc}, m}, \tilde{\psi}_j - \tilde{\psi}_j^{\text{loc}, m}) = a(\tilde{\psi}_j - \tilde{\psi}_j^{\text{loc}, m}, \tilde{\psi}_j - w) \nonumber\\
& \le || \tilde{\psi}_j - \tilde{\psi}_j^{\text{loc}, m} ||_e || \tilde{\psi}_j - w ||_e \le C \varepsilon^2 || \nabla( \tilde{\psi}_j - \tilde{\psi}_j^{\text{loc}, m} ) || || \nabla( \tilde{\psi}_j - w  ) ||,
\end{align}
where we have used the fact that $a(\psi_j, w - \tilde{\psi}_j^{\text{loc}, m}) = a(\psi_j^{\text{loc}, m}, w- \tilde{\psi}_j^{\text{loc}, m}) = 0$ and hence $a(\tilde{\psi}_j - \tilde{\psi}_j^{\text{loc}, m}, w - \tilde{\psi}_j^{\text{loc}, m}) = a(\psi_j - \psi_j^{\text{loc}, m}, w - \tilde{\psi}_j^{\text{loc}, m}) = 0$.

Define the cutoff function
\begin{align}
\eta = \frac{\text{dist}(\bx, \Omega \backslash N^{m - 2}(S_j))}{\text{dist}(\bx, N^{m - 3}(S_j)) + \text{dist}(\bx, \Omega \backslash N^{m - 2}(S_j))}.
\end{align}
Then $\eta = 1$ in $N^{m - 3}(S_j)$, $\eta = 0$ in $\Omega \backslash N^{m - 2}(S_j)$ and $0 \le \eta \le 1$ in $N^{m - 2}(S_j) \backslash N^{m - 3}(S_j)$. Furthermore, $H || \nabla \eta ||_{\infty} \le \gamma$ and $\text{supp}(\nabla \eta) = N^{m - 2}(S_j) \backslash N^{m - 3}(S_j)$.

Take $w = \eta \tilde{\psi}_j - I^{-1,\text{loc}}_H(I_H( \eta \tilde{\psi}_j )) \in W(N^m(S_j))$. Then, we can prove that
\begin{align}
|| \nabla( \psi_j - \psi_j^{\text{loc}, m} ) || & = || \nabla( \tilde{\psi}_j - \tilde{\psi}_j^{\text{loc}, m} ) ||^2  \le C || \nabla( \tilde{\psi}_j - w ) ||^2 \le C \varepsilon^{-2} || \tilde{\psi}_j - w ||_e^2 \nonumber\\
& \le C \Big( || \nabla( (1 - \eta) \tilde{\psi}_j ) ||^2 + \frac{1}{\varepsilon^{2}} || (1 - \eta) \tilde{\psi}_j ||^2 \nonumber\\
& + || \nabla( \eta \tilde{\psi}_j ) ||^2_{N^m(S_j) \backslash N^{m - 5}(S_j)} + \frac{1}{\varepsilon^{2}} || \eta \tilde{\psi}_j ||^2_{N^m(S_j) \backslash N^{m - 5}(S_j)} \Big) \nonumber\\
& \le C \left( 1 + \frac{H^2}{\varepsilon^{2}} + H^2 || \nabla \eta ||_{\infty}^2 \right) || \nabla \psi_j ||^2_{\Omega \backslash N^{m - 5}(S_j)} + C \frac{H^2}{\varepsilon^{2}} || \nabla \psi_j ||^2_{\Omega \backslash N^{m - 6}(S_j)} \nonumber\\
& \le C || \nabla \psi_j ||^2_{\Omega \backslash N^{m - 6}(S_j)},
\end{align}
which completes the proof with Theorem \ref{thm: global exponential decay} and Lemma \ref{lem: bound of global basis}.
\end{proof}
\begin{lem} \label{lem: comparison of global and local projection}
Let $v \in H^1_P(\Omega)$ and $v_1 = \sum_{k = 1}^{N_H} (v, \phi_k) \psi_k, v_2 = \sum_{k = 1}^{N_H} (v, \phi_k) \psi_k^{\text{loc}, m}$. Then under Assumption \ref{asp: resolution assumption}, we have
\begin{equation}
|| v_1 - v_2 ||_e \le C \varepsilon H^{-\frac{3}{2}d} \beta^m || v ||_e.
\end{equation}
\end{lem}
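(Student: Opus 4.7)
The plan is to reduce the estimate for $\|v_1-v_2\|_e$ to the already established bound
$\|\nabla(\psi_j - \psi_j^{\mathrm{loc},m})\| \le C H^{-3d/2}\beta^m$, together with the norm equivalence on the kernel $W$ and a standard stability estimate for the nodal functionals $v\mapsto (v,\phi_k)$.

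First I would observe that $v_1-v_2 \in W$. Indeed, by construction both $\psi_k$ and $\psi_k^{\mathrm{loc},m}$ satisfy the same constraints $(\psi_k,\phi_\ell) = (\psi_k^{\mathrm{loc},m},\phi_\ell) = \delta_{k\ell}$, so $(v_1,\phi_\ell)=(v_2,\phi_\ell)=(v,\phi_\ell)$ for every $\ell$, which means $I_H(v_1-v_2)=0$. Under Assumption \ref{asp: resolution assumption}, the kernel lemma then gives $\|v_1-v_2\|_e \le C\varepsilon\,\|\nabla(v_1-v_2)\|$, so it suffices to bound the latter.

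Next I would expand
\begin{equation*}
v_1 - v_2 = \sum_{k=1}^{N_H} (v,\phi_k)\,\bigl(\psi_k - \psi_k^{\mathrm{loc},m}\bigr)
\end{equation*}
and apply the Cauchy--Schwarz inequality pointwise in $\bx$, then integrate, to obtain
\begin{equation*}
\|\nabla(v_1-v_2)\|^2 \le \Bigl(\sum_{k=1}^{N_H}|(v,\phi_k)|^2\Bigr)\Bigl(\sum_{k=1}^{N_H}\|\nabla(\psi_k-\psi_k^{\mathrm{loc},m})\|^2\Bigr).
\end{equation*}
The second factor is estimated directly by the previous lemma: since $N_H \le C H^{-d}$ and each summand is at most $C H^{-3d}\beta^{2m}$, it is bounded by $C H^{-4d}\beta^{2m}$.

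For the first factor I would use the standard nodal estimates $\|\phi_k\|^2 \le C H^{d}$ together with the bounded overlap property \eqref{eq: bounded number of overlapping element}: by Cauchy--Schwarz,
\begin{equation*}
|(v,\phi_k)|^2 \le \|v\|_{\mathrm{supp}\,\phi_k}^2\,\|\phi_k\|^2 \le C H^d \|v\|_{\mathrm{supp}\,\phi_k}^2,
\end{equation*}
and summing in $k$ gives $\sum_k|(v,\phi_k)|^2 \le C H^d\|v\|^2$. Multiplying the two factors yields $\|\nabla(v_1-v_2)\|^2 \le C H^{-3d}\beta^{2m}\|v\|^2$. Finally, since $V^\delta \ge V_{\min}>0$, the energy norm controls the $L^2$ norm, $\|v\| \le C\|v\|_e$, and combining everything produces the desired bound $\|v_1-v_2\|_e \le C\varepsilon H^{-3d/2}\beta^m \|v\|_e$. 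The only subtle step is keeping the powers of $H$ straight: a naive triangle-inequality sum over the $N_H$ terms loses a factor of $H^{-d/2}$, so it is essential to use Cauchy--Schwarz inside the integral and exploit the $L^2$-stability $\sum_k|(v,\phi_k)|^2 \lesssim H^d\|v\|^2$ rather than a pointwise or $L^\infty$ bound on the coefficients.
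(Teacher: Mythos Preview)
Your argument is correct and reaches the same estimate as the paper, but the route differs in how the sum over $k$ is controlled. You apply Cauchy--Schwarz in the summation index and then balance the $H^{-d}$ loss from $\sum_k \|\nabla(\psi_k-\psi_k^{\mathrm{loc},m})\|^2 \le C H^{-4d}\beta^{2m}$ against the $H^{d}$ gain from $\sum_k |(v,\phi_k)|^2 \le C H^d\|v\|^2$. The paper instead uses a direct triangle inequality together with the partition-of-unity property $\sum_k \phi_k = 1$:
\[
\sum_{k}|(v,\phi_k)|\,\|\nabla(\psi_k-\psi_k^{\mathrm{loc},m})\| \le C H^{-3d/2}\beta^m \sum_k \int_\Omega |v|\phi_k = C H^{-3d/2}\beta^m (|v|,1) \le C H^{-3d/2}\beta^m \|v\|_e.
\]
So your closing remark is slightly off: the ``naive'' triangle-inequality approach does \emph{not} lose $H^{-d/2}$ once one uses $\sum_k\phi_k=1$ rather than a termwise $|(v,\phi_k)|\le CH^{d/2}\|v\|$ bound. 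Your Cauchy--Schwarz route is a perfectly good alternative and has the advantage that it does not rely on the basis being a partition of unity, only on bounded overlap; the paper's argument is a bit shorter and avoids the bookkeeping of matching powers of $H$.
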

\begin{proof}
Note that $v_1 - v_2 \in W$. Then
\begin{align}
|| v_1 - v_2 ||_e & \le C \varepsilon || \nabla( v_1 - v_2 ) || \le C \varepsilon \sum_{k = 1}^{N_H} | (v, \phi_k) | || \nabla( \psi_k - \psi_k^{\text{loc}, m} ) || \nonumber\\
& \le C \varepsilon H^{-\frac{3}{2}d} \beta^m (|v|, 1) \le C \varepsilon H^{-\frac{3}{2}d} \beta^m || v ||_e.
\end{align}
\end{proof}

Similarly, the Galerkin approximation of \eqref{eq: steady Schrodinger equation} in $\Psi_{H,m}$ is to seek $u_{H,m} \in \Psi_{H,m}$ such that
\begin{equation} \label{eq: Galerkin approximation in localized multiscale space}
a(u_{H,m}, v_{H,m}) = (f, v_{H,m}), \quad \forall v_{H,m} \in \Psi_{H,m}.
\end{equation}

In order to obtain the projection error estimate for the localized multiscale finite element space $\Psi_{H, m}$, we need the following assumption on the oversampling size $m$.
\begin{asp} \label{asp: oversampling assumption}
The oversamping size $m$ satisfies
\begin{equation} \label{eq: oversampling assumption}
m \ge \frac{ C_d \log (1 / H) + \log ( \varepsilon^2 C_{\text{st}}(\varepsilon, V) ) } { | \log(\beta) | },
\end{equation}
where $C_d = 3d / 2 + 2$.
\end{asp}

Then we have an error estimate for $u - u_{H,m}$ as follows.
\begin{thm} \label{thm: projection error in localized space}
Assume that Assumptions \ref{asp: resolution assumption} and \ref{asp: oversampling assumption} hold and let $u$ the solution of \eqref{eq: steady Schrodinger equation} and $u_{H,m}$ be the solution of \eqref{eq: Galerkin approximation in localized multiscale space}. If $f \in L^2(\Omega)$, then
\begin{equation} \label{eq: energy error in local space}
|| u - u_{H,m} ||_e \le C \frac{H}{\varepsilon} || f ||,
\end{equation}
\begin{equation} \label{eq: L2 error in local space}
|| u - u_{H,m} || \le C \frac{H^2}{\varepsilon^2} || f ||.
\end{equation}
Moreover, If $f \in H^1(\Omega)$, then
\begin{equation} \label{eq: super convergence of energy error in local space}
|| u - u_{H,m} ||_e \le C \frac{H^2}{\varepsilon} || f ||_{H^1},
\end{equation}
\begin{equation} \label{eq: super convergence of L2 error in local space}
|| u - u_{H,m} || \le C \frac{H^3}{\varepsilon^2} || f ||_{H^1}.
\end{equation}
\end{thm}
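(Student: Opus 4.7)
The plan is to leverage the global estimates from Theorem \ref{thm: projection error in global space} by applying a C\'{e}a-type quasi-optimality argument inside $\Psi_{H,m}$ and using Lemma \ref{lem: comparison of global and local projection} to control the difference between the global and localized approximations, with the oversampling assumption ensuring that the localization perturbation is absorbed into the global error. Since $a(\cdot,\cdot)$ is Hermitian and $\Psi_{H,m} \subset H^1_P(\Omega)$ is a closed subspace, the Galerkin orthogonality $a(u - u_{H,m}, v_{H,m}) = 0$ for all $v_{H,m} \in \Psi_{H,m}$ gives the quasi-optimal energy bound $||u - u_{H,m}||_e \le ||u - v_{H,m}||_e$ for every $v_{H,m} \in \Psi_{H,m}$.

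The natural candidate is $v_{H,m} := \sum_{k=1}^{N_H}(u,\phi_k)\psi_k^{\text{loc},m}$. Indeed, from the decomposition argument in the proof of Lemma \ref{lem: global quasi-orthogonality}, the global Galerkin solution is $u_H = \sum_{k=1}^{N_H}(u,\phi_k)\psi_k$, so $u_H - v_{H,m}$ fits exactly into the setting of Lemma \ref{lem: comparison of global and local projection} with $v = u$. This yields $||u_H - v_{H,m}||_e \le C \varepsilon H^{-3d/2}\beta^m ||u||_e \le C \varepsilon H^{-3d/2}\beta^m C_{\text{st}}(\varepsilon,V)||f||$. Assumption \ref{asp: oversampling assumption} with $C_d = 3d/2 + 2$ is precisely calibrated so that $\beta^m \le H^{C_d}/(\varepsilon^2 C_{\text{st}}(\varepsilon,V))$, whence $||u_H - v_{H,m}||_e \le C (H^2/\varepsilon) ||f||$. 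The triangle inequality $||u - v_{H,m}||_e \le ||u - u_H||_e + ||u_H - v_{H,m}||_e$ combined with \eqref{eq: energy error in global space} then produces \eqref{eq: energy error in local space}, and the same argument with \eqref{eq: super convergence of energy error in global space} and $||f|| \le ||f||_{H^1}$ delivers \eqref{eq: super convergence of energy error in local space}.

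For the $L^2$ estimates I would rerun the Aubin--Nitsche duality, but now entirely inside $\Psi_{H,m}$. Let $w \in H^1_P(\Omega)$ solve $a(w,v) = (u - u_{H,m}, v)$ for all $v \in H^1_P(\Omega)$ and let $w_{H,m} \in \Psi_{H,m}$ be its Galerkin approximation. Using Galerkin orthogonality,
\begin{equation}
||u - u_{H,m}||^2 = a(w - w_{H,m}, u - u_{H,m}) \le ||w - w_{H,m}||_e \, ||u - u_{H,m}||_e.
\end{equation}
Applying the just-proved energy bound \eqref{eq: energy error in local space} to the dual problem gives $||w - w_{H,m}||_e \le C (H/\varepsilon) ||u - u_{H,m}||$, and substituting \eqref{eq: energy error in local space} back for $||u - u_{H,m}||_e$ yields \eqref{eq: L2 error in local space}. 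The super-convergent estimate \eqref{eq: super convergence of L2 error in local space} follows from the same duality, this time pairing the low-regularity dual bound for $w - w_{H,m}$ with the high-regularity primal bound \eqref{eq: super convergence of energy error in local space} for $u - u_{H,m}$.

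The only genuinely new step compared to the global case is the calibration between the algebraic factor $H^{-3d/2}$ inherited through Lemma \ref{lem: bound of global basis} and the geometric decay $\beta^m$. The oversampling condition is engineered exactly so that this calibration places the localization perturbation at the same $H^2/\varepsilon$ scale as the global projection error; verifying this bookkeeping is the main hurdle, after which the proof reduces to standard Galerkin quasi-optimality and Aubin--Nitsche duality.
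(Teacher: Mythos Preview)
Your proposal is correct and follows essentially the same approach as the paper: both use C\'ea quasi-optimality with the comparison function $\tilde{u}_{H,m}=\sum_k(u,\phi_k)\psi_k^{\text{loc},m}$, invoke Lemma \ref{lem: comparison of global and local projection} for the localization perturbation, calibrate via Assumption \ref{asp: oversampling assumption}, and finish with Aubin--Nitsche. The only cosmetic difference is that the paper expands $||u-\tilde{u}_{H,m}||_e^2$ directly (using $u-\tilde{u}_{H,m}\in W$ and $a(u_H,u-\tilde{u}_{H,m})=0$) rather than invoking Theorem \ref{thm: projection error in global space} through a triangle inequality as you do, but the ingredients and logic are identical.
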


\begin{proof}
We first consider the case where $f \in L^2(\Omega)$. Let $\tilde{u}_{H,m} = \sum_{k = 1}^{N_H} (u, \phi_k) \psi_k^{\text{loc}, m}$. Then it is easy to verify that
\begin{align}
|| u - u_{H,m} ||_e \le || u - \tilde{u}_{H, m} ||_e.
\end{align}
Set $u_H = \sum_{k = 1}^{N_H} (u, \phi_k) \psi_k$. Then since $u - \tilde{u}_{H,m} \in W$, $a(u_H, u - \tilde{u}_{H,m}) = 0$ and
\begin{align}
|| u - \tilde{u}_{H, m} ||_e^2 & = a(u - \tilde{u}_{H, m}, u - \tilde{u}_{H, m}) = a(u, u - \tilde{u}_{H, m}) + a(u_H - \tilde{u}_{H, m}, u - \tilde{u}_{H, m}) \nonumber\\
& = (f, u - \tilde{u}_{H, m}) + a(u_H - \tilde{u}_{H, m}, u - \tilde{u}_{H, m}) \nonumber\\
& \le C H || f || || \nabla( u - \tilde{u}_{H, m} ) || + || u_H - \tilde{u}_{H, m} ||_e || u - \tilde{u}_{H, m} ||_e \nonumber\\
& \le C \frac{H}{\varepsilon} || f || || u - \tilde{u}_{H, m} ||_e + C \varepsilon H^{-\frac{3}{2}d} \beta^m || u ||_e || u - \tilde{u}_{H, m} ||_e \nonumber\\
& \le C \frac{H}{\varepsilon} || f || || u - \tilde{u}_{H, m} ||_e + C \varepsilon H^{-\frac{3}{2}d} C_{\text{st}}(\varepsilon, V) \beta^m || f || || u - \tilde{u}_{H, m} ||_e.
\end{align}
Hence if $m$ satisfies Assumption \ref{asp: oversampling assumption}, then
\begin{align}
|| u - u_{H,m} ||_e \le C \frac{H}{\varepsilon} || f ||.
\end{align}
A similar Aubin-Nitsche technique to the proof of Theorem \ref{thm: projection error in global space} can be applied to obtain \eqref{eq: L2 error in local space}. Moreover, if $f \in H^1(\Omega)$, we have by Lemma \ref{lem: property of clement-type interpolation} that
\begin{align}
|| u - \tilde{u}_{H, m} ||_e^2 & = (f, u - \tilde{u}_{H, m}) + a(u_H - \tilde{u}_{H, m}, u - \tilde{u}_{H, m}) \nonumber\\
& \le C \frac{H^2}{\varepsilon} || \nabla f || || u - \tilde{u}_{H, m} ||_e + C \varepsilon H^{-\frac{3}{2}d} C_{\text{st}}(\varepsilon, V) \beta^m || f || || u - \tilde{u}_{H, m} ||_e.
\end{align}
Hence if $m$ satisfies Assumption \ref{asp: oversampling assumption}, we can obtain
\begin{align}
|| u - u_{H,m} ||_e \le C \frac{H^2}{\varepsilon} || f ||_{H^1}.
\end{align}
Analogously, we obtain \eqref{eq: super convergence of L2 error in local space} using the Aubin-Nitsche technique.
\end{proof}

\begin{rem}
To obtain \eqref{eq: energy error in local space} and \eqref{eq: L2 error in local space}, it is sufficient to assume that $m$ satisfies \eqref{eq: oversampling assumption} with $C_d = 3d / 2 + 1$. We impose a stronger assumption on $m$ in order to avoid lengthy illustration of Theorem \ref{thm: projection error in localized space}.
\end{rem}

\section{Convergence of the OC MsFEM for Schr\"{o}dinger equations with  multiscale potentials}
\label{sec: convergence analysis}

In this section, we will study the error estimate of the OC MsFEM for the Schr\"{o}dinger equation \eqref{eq: time-dependent Schrodinger equation}, where the Crank-Nicolson scheme is used for temporal discretization. Throughout this section, we will not distinguish between the global multiscale finite element space $\Psi_H$ and the localized multiscale finite element space $\Psi_{H, m}$ with $H$ satisfying Assumption \ref{asp: resolution assumption} and $m$ satisfying Assumption \ref{asp: oversampling assumption}. Both of the spaces will be denoted by $\Psi_H$.

\subsection{Projection error}\label{sec: Projection error}

Let $u(t)$ be the solution of the Schr\"{o}dinger equation \eqref{eq: time-dependent Schrodinger equation} and $\hat{u}(t)$ be the projection of $u(t)$ in $\Psi_H$ such that $\forall 0 \le t \le T, \hat{u}(t) \in \Psi_H$ and
\begin{equation}
a(u(t) - \hat{u}(t), w) = 0, \quad \forall w \in \Psi_H.
\end{equation}
Then, we have the following lemmas on the projection errors.
\begin{lem} \label{lem: projection error}
If $u_t(t) \in L^2(\Omega)$ for any $t \in [0, T]$, then it holds true that for any $0 \le t \le T$
\begin{equation}
|| u(t) - \hat{u}(t) ||_e \le C \frac{H}{\varepsilon}, \quad \text{and},
\quad || u(t) - \hat{u}(t) || \le C \frac{H^2}{\varepsilon^2}.
\end{equation}
Moreover, if $u_t(t) \in H^1(\Omega)$ for any $t \in [0, T]$, then it holds true that for any $0 \le t \le T$
\begin{equation}
|| u(t) - \hat{u}(t) ||_e \le C \frac{H^2}{\varepsilon^2 \delta},\quad \text{and},
\quad
|| u(t) - \hat{u}(t) || \le C \frac{H^3}{\varepsilon^3 \delta}.
\end{equation}
\end{lem}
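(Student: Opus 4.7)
The plan is to reduce this time-dependent projection-error estimate to the stationary estimates already proved in Theorems \ref{thm: projection error in global space} and \ref{thm: projection error in localized space}. The key observation is that, since $D^{\bm{\sigma}}_\bx u^{\varepsilon,\delta}$ is periodic for $|\bm{\sigma}|=1,2$, integration by parts gives
\begin{equation*}
a(u(t),w) \;=\; \tfrac{\varepsilon^2}{2}(\nabla u(t),\nabla w) + (Vu(t),w) \;=\; \bigl(\Hc u(t),w\bigr), \qquad \forall w\in H^1_P(\Omega).
\end{equation*}
Using the Schr\"odinger equation \eqref{eq: time-dependent Schrodinger equation} pointwise in $t$, $\Hc u(t)=i\varepsilon\,u_t(t)$, so
\begin{equation*}
a(u(t),w) \;=\; i\varepsilon\,(u_t(t),w), \qquad \forall w\in H^1_P(\Omega).
\end{equation*}
Thus at each fixed $t$, $\hat{u}(t)$ is exactly the Galerkin approximation in $\Psi_H$ of the stationary problem \eqref{eq: steady Schrodinger equation} with source $f(t)=i\varepsilon\,u_t(t)$.

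With this reformulation, the first pair of bounds follows immediately from \eqref{eq: energy error in global space}--\eqref{eq: L2 error in global space} (or their localized counterparts \eqref{eq: energy error in local space}--\eqref{eq: L2 error in local space}), since Lemma \ref{lem: temporal regularity} with $k=1$ yields
\begin{equation*}
\|f(t)\| \;=\; \varepsilon\,\|u_t(t)\| \;\le\; C,
\end{equation*}
so $\|u(t)-\hat u(t)\|_e \le C(H/\varepsilon)\|f(t)\| \le CH/\varepsilon$ and $\|u(t)-\hat u(t)\|\le CH^2/\varepsilon^2$.

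For the second pair, if in addition $u_t(t)\in H^1(\Omega)$, then $f(t)\in H^1(\Omega)$ and the super-convergence bounds \eqref{eq: super convergence of energy error in global space}--\eqref{eq: super convergence of L2 error in global space} (resp.\ \eqref{eq: super convergence of energy error in local space}--\eqref{eq: super convergence of L2 error in local space}) apply. The relevant quantity is $\|\nabla f(t)\|=\varepsilon\,\|\nabla u_t(t)\|$, which Lemma \ref{lem: temporal spatial regularity} with $k=1$ controls by
\begin{equation*}
\|\nabla u_t(t)\| \;\le\; \frac{C\,\varepsilon^{-1}}{\varepsilon\delta\,\min\{1,1\}} \;=\; \frac{C}{\varepsilon^{2}\delta}, \qquad \text{so} \qquad \|\nabla f(t)\| \;\le\; \frac{C}{\varepsilon\delta}.
\end{equation*}
Plugging this into Theorem \ref{thm: projection error in global space} gives $\|u-\hat u\|_e\le CH^2/(\varepsilon^2\delta)$ and $\|u-\hat u\|\le CH^3/(\varepsilon^3\delta)$, which is exactly the claim. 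In the localized setting, the stationary bound involves $\|f\|_{H^1}$ rather than $\|\nabla f\|$, but combining the two Lemmas above gives $\|f(t)\|_{H^1}\le C/(\varepsilon\delta)$ (absorbing the lower-order term since $\varepsilon\delta\le 1$), yielding the same estimates.

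The only step requiring care is the initial integration-by-parts identity, which is where the regularity assumption that $D_\bx^{\bm{\sigma}}u^{\varepsilon,\delta}$ is periodic for $|\bm{\sigma}|=2$ is used; once that is in place the rest of the argument is simply a substitution of the regularity bounds of Section \ref{sec: preliminaries} into the stationary projection estimates of Section \ref{sec: OC basis}. I do not foresee any genuine obstacle beyond verifying these ingredients line up; the real content of the lemma is the translation of higher temporal regularity of $u$ into higher spatial regularity of the effective source $f=i\varepsilon u_t$, which is precisely the mechanism responsible for the weaker dependence of the OC MsFEM on $\delta$ advertised in Remark \ref{rem: more oscillatory spatial derivatives}.
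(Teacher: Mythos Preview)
Your proposal is correct and follows essentially the same approach as the paper: you identify $\hat u(t)$ as the Galerkin approximation of the stationary problem with source $f(t)=i\varepsilon u_t(t)=\Hc u(t)$, then feed the regularity bounds of Lemmas \ref{lem: temporal regularity} and \ref{lem: temporal spatial regularity} into Theorems \ref{thm: projection error in global space} and \ref{thm: projection error in localized space}. The paper's proof is the same computation written slightly more tersely (it applies the theorems directly to $\|\Hc u(t)\|$ and $\|\Hc u(t)\|_{H^1}$ without explicitly naming $f$), so there is nothing substantively different to compare.
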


\begin{proof}
We first consider the case where $u_t(t) \in L^2(\Omega), \forall t \in [0, T]$. By \eqref{eq: time-dependent Schrodinger equation}, Theorem  \ref{thm: projection error in global space}, Theorem  \ref{thm: projection error in localized space} and Lemma \ref{lem: temporal regularity}, we have that for any $0 \le t \le T$,
\begin{align}
& || u(t) - \hat{u}(t) ||_e \le C \frac{H}{\varepsilon} || \Hc u(t) || \le C H || u_t(t) || \le C \frac{H}{\varepsilon}, \\
& || u(t) - \hat{u}(t) || \le C \frac{H^2}{\varepsilon^2} || \Hc u(t) || \le C \frac{H^2}{\varepsilon} || u_t(t) || \le C \frac{H^2}{\varepsilon^2}.
\end{align}
Moreover, if $u_t(t) \in H^1(\Omega)$ for any $t \in [0, T]$, by \eqref{eq: time-dependent Schrodinger equation}, Theorem \ref{thm: projection error in global space}, Theorem \ref{thm: projection error in localized space}, Lemma \ref{lem: temporal regularity}, and Lemma \ref{lem: temporal spatial regularity}, we know that for any $0 \le t \le T$,
\begin{align}
& || u(t) - \hat{u}(t) ||_e \le C \frac{H^2}{\varepsilon} || \Hc u(t) ||_{H^1} \le C H^2 || u_t(t) ||_{H^1} \le C \frac{H^2}{\varepsilon^2 \delta}, \\
& || u(t) - \hat{u}(t) || \le C \frac{H^3}{\varepsilon^2} || \Hc u(t) ||_{H^1} \le C \frac{H^3}{\varepsilon} || u_t(t) ||_{H^1} \le C \frac{H^3}{\varepsilon^3 \delta}.
\end{align}
\end{proof}

\begin{lem} \label{lem: projection error of time derivatives}
If $\partial_t^{k + 1} u(t) \in L^2(\Omega)$ for any $t \in [0, T]$ and $k = 1, 2$, then it holds true that for any $0 \le t \le T$
\begin{equation}
|| \partial_t^k u(t) - \partial_t^k \hat{u}(t) ||_e \le \frac{C H}{\varepsilon^{1 - k} \min\{ \varepsilon^{2k}, \delta^{2k} \}},
\quad || \partial_t^k u(t) - \partial_t^k \hat{u}(t) || \le \frac{C H^2}{\varepsilon^{2 - k} \min\{ \varepsilon^{2k}, \delta^{2k} \}}.
\end{equation}
Moreover, if $\partial_t^{k + 1} u(t) \in H^1(\Omega)$ for any $t \in [0, T]$ and $k = 1, 2$, then it holds true that for any $0 \le t \le T$
\begin{equation}
|| \partial_t^k u(t) - \partial_t^k \hat{u}(t) ||_e \le \frac{C H^2}{\varepsilon^{2 - k} \delta \min\{ \varepsilon^{2k}, \delta^{2k} \}}, \quad
|| \partial_t^k u(t) - \partial_t^k \hat{u}(t) || \le \frac{C H^3}{\varepsilon^{3 - k} \delta \min\{ \varepsilon^{2k}, \delta^{2k} \}}.
\end{equation}
\end{lem}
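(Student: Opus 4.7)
The plan is to reduce this to the stationary projection estimates already proved in Theorems \ref{thm: projection error in global space} and \ref{thm: projection error in localized space}, applied to time-differentiated versions of the Schr\"odinger equation. First I would observe that differentiating the defining relation $a(u(t) - \hat u(t), w) = 0$ with respect to $t$ (which is legitimate since $\hat u(t)$ is the unique solution of a finite-dimensional linear system whose data depend smoothly on $t$) yields
\begin{equation}
a(\partial_t^k u(t) - \partial_t^k \hat u(t), w) = 0, \quad \forall w \in \Psi_H,
\end{equation}
so that $\partial_t^k \hat u(t)$ is itself the Galerkin projection of $\partial_t^k u(t)$ for each $t \in [0,T]$.

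Next I would use the Schr\"odinger equation \eqref{eq: time-dependent Schrodinger equation} in differentiated form: taking $k$ time derivatives gives $\Hc \partial_t^k u = i\varepsilon\, \partial_t^{k+1} u$. Thus $\partial_t^k u$ solves the stationary problem \eqref{eq: steady Schrodinger equation} with right-hand side $f = i\varepsilon\, \partial_t^{k+1} u$, and $\partial_t^k \hat u$ is the corresponding Galerkin approximation in $\Psi_H$. Applying Theorem \ref{thm: projection error in global space} (or Theorem \ref{thm: projection error in localized space} in the localized case) yields
\begin{equation}
\| \partial_t^k u - \partial_t^k \hat u \|_e \le C\frac{H}{\varepsilon}\,\varepsilon\|\partial_t^{k+1} u\|, \qquad \| \partial_t^k u - \partial_t^k \hat u \| \le C\frac{H^2}{\varepsilon^2}\,\varepsilon\|\partial_t^{k+1} u\|,
\end{equation}
and for the super-convergence estimates (assuming $\partial_t^{k+1} u \in H^1$), the analogous inequalities with $H^2$ and $H^3$ in terms of $\|f\|_{H^1} = \varepsilon\|\partial_t^{k+1} u\|_{H^1}$.

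The final step is to insert the regularity bounds. From Lemma \ref{lem: temporal regularity} with index $k+1$, we have $\|\partial_t^{k+1} u\| \le C\varepsilon^{k-1}/\min\{\varepsilon^{2k}, \delta^{2k}\}$, which, multiplied by the extra $\varepsilon$ from $f$, gives exactly the factors $\varepsilon^{1-(1-k)}/\min\{\varepsilon^{2k},\delta^{2k}\}$ needed in the $L^2$-regularity bounds. For the $H^1$-regularity case I would combine Lemma \ref{lem: temporal regularity} and Lemma \ref{lem: temporal spatial regularity} to estimate $\|\partial_t^{k+1} u\|_{H^1}$; since $\varepsilon\delta \le 1$, the gradient term $\|\nabla\partial_t^{k+1} u\| \le C\varepsilon^{k-1}/(\varepsilon\delta\min\{\varepsilon^{2k},\delta^{2k}\})$ dominates, producing the extra $1/(\varepsilon\delta)$ factor that accounts for the $\delta^{-1}$ and the adjusted $\varepsilon$ power in the super-convergence estimates. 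A short case-by-case arithmetic check for $k=1,2$ verifies that all four claimed bounds come out with the correct exponents.

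The argument is essentially mechanical once the time-differentiated projection identity is in place, so no single step is a real obstacle; the only point requiring a bit of care is the bookkeeping of $\varepsilon$ and $\delta$ powers and confirming that the gradient term in $\|\partial_t^{k+1}u\|_{H^1}$ indeed dominates uniformly under the standing assumption $0 < \varepsilon,\delta \ll 1$. If desired, this bookkeeping can be presented compactly by handling the two regularity cases ($L^2$ and $H^1$) in parallel and invoking Lemmas \ref{lem: temporal regularity} and \ref{lem: temporal spatial regularity} at the end.
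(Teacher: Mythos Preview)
Your proposal is correct and follows exactly the approach the paper intends: the paper simply states that the proof is ``similar to Lemma~\ref{lem: projection error},'' and what you have written is precisely that argument applied to $\partial_t^k u$ in place of $u$, using $\Hc\,\partial_t^k u = i\varepsilon\,\partial_t^{k+1}u$, Theorems~\ref{thm: projection error in global space}/\ref{thm: projection error in localized space}, and the regularity Lemmas~\ref{lem: temporal regularity} and~\ref{lem: temporal spatial regularity}. Your bookkeeping of the $\varepsilon,\delta$ powers is accurate.
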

The proof of Lemma \ref{lem: projection error of time derivatives} is similar to Lemma \ref{lem: projection error}.
We can easily see that higher regularity of the solution $u$ will lead to super convergence of the projection errors in $\Psi_H$ w.r.t. $H$.

We can also study the error of the FEM in solving the Schr\"{o}dinger equation \eqref{eq: time-dependent Schrodinger equation}.  Let $\tilde{u}$ be the projection of $u$ in the standard linear finite element space $\Phi_H$ such that $\forall 0 \le t \le T$, $\tilde{u} \in \Phi_H$ and
\begin{equation}
		a(u(t) - \tilde{u}(t), w) = 0, \quad \forall w \in \Phi_H.
\end{equation}
Then
\begin{equation}
		|| u(t) - \tilde{u}(t) ||_e \le \inf \limits_{w \in \Phi_H} || u(t) - w ||_e.
\end{equation}
Let $\chi(t)$ be the interpolation of $u(t)$ in $\Phi_H$ such that $\chi = \sum_{\bx_k \in \oN_H} u(t, \bx_k) \phi_k$, where $\phi_j(\bx_k) = \delta_{j,k}$. A well known result for the errors of the interpolation \cite{brenner2007mathematical,ciarlet2002finite,thomee2007galerkin} is that
	\begin{equation}
		|| \chi(t) - u(t) || \le C H^2 || u(t) ||_{H^2}, \text{ and }, || \nabla( \chi(t) - u(t) ) || \le C H || u(t) ||_{H^2}
	\end{equation}
	and hence
	\begin{equation}
		|| u(t) - \tilde{u}(t) ||_e \le || \chi(t) - u(t) ||_e \le C \varepsilon H \sqrt{1 + \frac{H^2}{\varepsilon^2}} || u(t) ||_{H^2}.
	\end{equation}
By Lemma \ref{lem: spatial regularity} and under Assumption \ref{asp: resolution assumption}, we know that for any $0 \le t \le T$,
	\begin{equation} \label{eq: energy error of projection in FEM}
		|| u(t) - \tilde{u}(t) ||_e \le C \frac{H}{\varepsilon \delta^2}.
	\end{equation}
Using the Aubin-Nitsche technique with $H^2$ regularity of elliptic equations \cite{evans1998partial,hackbusch2017elliptic}, we obtain that for any $0 \le t \le T$,
	\begin{equation} \label{eq: L2 error of projection in FEM}
		|| u(t) - \tilde{u}(t) || \le C H^2 || u ||_{H^2} \le C \frac{H^2}{\varepsilon^2 \delta^2}.
	\end{equation}
In comparison with Lemma \ref{lem: projection error}, we find that the projection error estimate of the OC MsFEM depends on higher temporal regularity of the solution $u$, while that of FEM depends on higher spatial regularity of $u$. Lemmas \ref{lem: temporal regularity}, \ref{lem: spatial regularity} and \ref{lem: temporal spatial regularity} show that in the presence of the multiscale potential $V^{\delta}$, the spatial derivatives of $u$ become more oscillatory than the time derivatives of $u$. Therefore, using the same mesh size the OC MsFEM gives more accurate results than the FEM in solving Schr\"{o}dinger equation with multiscale potentials due to its super convergence behavior for high-regularity solutions and weaker dependence on the small parameters $\varepsilon$ and $\delta$ for low-regularity solutions.

\subsection{Crank-Nicolson OC MsFEM scheme}

We analyse the error estimate of the Crank-Nicolson OC MsFEM scheme for \eqref{eq: time-dependent Schrodinger equation}, where we use the Crank-Nicolson scheme in temporal discretization and the OC MsFEM in spatial discretization.
In the following, we introduce some notations. For some $N \in \mathbb{N}$ and $N > 0$, let $\Delta t = \frac{T}{N}$ and $t_n = n \Delta t, n = 0, 1, \ldots, N$.

We approximate $u(t_n)$ by $U^n \in \Psi_H$ such that
\begin{equation} \label{eq: Crank-Nicolson scheme}
\begin{aligned}
i \varepsilon (\bar{\partial} U^n, w) & = a\left(\frac{U^n + U^{n - 1}}{2}, w\right), \quad \forall w \in \Psi_H, n = 1, \ldots, N, \\
U^0 & = \hat{u}(0),
\end{aligned}
\end{equation}
where $\bar{\partial} U^n = \frac{U^n - U^{n - 1}}{\Delta t}$. We let $U^n - u(t_n) = \theta^n + \rho^n$, where $\theta^n = U^n - \hat{u}(t_n)$ and $\rho^n = \hat{u}(t_n) - u(t_n)$. Then,  $\theta^n$ satisfies that $\theta^0 = 0$ and
\begin{equation} \label{eq: evolution of error in Crank-Nicolson}
i \varepsilon (\bar{\partial} \theta^n, w) + i \varepsilon (z^n_1, w) + i \varepsilon (z^n_2, w) = a\left(\frac{\theta^n + \theta^{n - 1}}{2}, w\right), \quad \forall w \in \Psi_H,\quad  n = 1, \ldots, N,
\end{equation}
where $\bar{\partial} \theta^n = \frac{\theta^n - \theta^{n - 1}}{\Delta t}$, $z^n_1 = \bar{\partial} \hat{u}(t_n) - \bar{\partial} u(t_n)$ and $z^n_2 = \bar{\partial} u(t_n) - \frac{u_t(t_n) + u_t(t_{n - 1})}{2}$ with $\bar{\partial} \hat{u}(t_n) = \frac{\hat{u}(t_n) - \hat{u}(t_{n - 1})}{\Delta t}$ and $\bar{\partial} {u}(t_n) = \frac{{u}(t_n) - {u}(t_{n - 1})}{\Delta t}$.

For the $L^2$ error of $U^N$, we have the following theorem.
\begin{thm} \label{thm: L2 error in Crank-Nicolson}
Assume that $U^N$ is the solution of \eqref{eq: Crank-Nicolson scheme} and $u$ is the solution of \eqref{eq: time-dependent Schrodinger equation}. If $u_t(t), u_{tt}(t), u_{ttt}(t) \in L^2(\Omega)$ for any $t \in [0, T]$, then
\begin{equation}
|| U^N - u(T) || \le C \left( \frac{\varepsilon \Delta t^2}{\min\{ \varepsilon^4, \delta^4 \}} + \frac{H^2}{\varepsilon \min\{ \varepsilon^2, \delta^2 \}} \right).
\end{equation}
Moreover, If $u_t(t), u_{tt}(t) \in H^1(\Omega)$ for any $t \in [0, T]$, then
\begin{equation} \label{eq: super convergence of L2 error in Crank Nicolson}
|| U^N - u(T) || \le C \left( \frac{\varepsilon \Delta t^2}{\min\{ \varepsilon^4, \delta^4 \}} + \frac{H^3}{\varepsilon^2 \delta \min\{ \varepsilon^2, \delta^2 \}} \right).
\end{equation}
\end{thm}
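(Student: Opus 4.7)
The plan is to split $U^n - u(t_n) = \theta^n + \rho^n$ with $\theta^n = U^n - \hat u(t_n)$ and $\rho^n = \hat u(t_n) - u(t_n)$. The projection part $\rho^N$ is bounded immediately by Lemma~\ref{lem: projection error}, giving $\|\rho^N\| \le CH^2/\varepsilon^2$ under the $L^2$ hypothesis and $\|\rho^N\| \le CH^3/(\varepsilon^3\delta)$ under the $H^1$ hypothesis; both are absorbed by the spatial term that will appear in the bound on $\theta^N$. The main work is therefore to estimate $\theta^N$ from the error equation~\eqref{eq: evolution of error in Crank-Nicolson}.

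The core step is a standard Crank--Nicolson energy identity: I test~\eqref{eq: evolution of error in Crank-Nicolson} with $w = \theta^n + \theta^{n-1} \in \Psi_H$ and take imaginary parts. Because $V$ is real, the sesquilinear form $a$ satisfies $a(v,w) = \overline{a(w,v)}$, so $a\bigl(\tfrac{1}{2}(\theta^n+\theta^{n-1}),\theta^n+\theta^{n-1}\bigr) = \tfrac{1}{2}\|\theta^n+\theta^{n-1}\|_e^2$ is real and contributes nothing. Using $\operatorname{Re}(\theta^n-\theta^{n-1},\theta^n+\theta^{n-1}) = \|\theta^n\|^2-\|\theta^{n-1}\|^2$ then yields
\begin{equation*}
\|\theta^n\|^2 - \|\theta^{n-1}\|^2 \le \Delta t\bigl(\|z_1^n\|+\|z_2^n\|\bigr)\bigl(\|\theta^n\|+\|\theta^{n-1}\|\bigr).
\end{equation*}
Factoring the difference of squares and cancelling the common factor (the degenerate case being trivial) gives the telescoping bound $\|\theta^n\|-\|\theta^{n-1}\| \le \Delta t(\|z_1^n\|+\|z_2^n\|)$, which summed with $\theta^0=0$ produces $\|\theta^N\| \le \Delta t\sum_{n=1}^N(\|z_1^n\|+\|z_2^n\|)$. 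No discrete Gronwall is needed.

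It remains to estimate the two consistency terms uniformly in $n$. For the space-projection error $z_1^n = \bar\partial \rho^n$, I use that the $a$-projection commutes with $\partial_t$ (since $a$ is time-independent and $\Psi_H$ is fixed), so $\partial_t\hat u = \widehat{u_t}$ and $\|z_1^n\| \le \max_{[t_{n-1},t_n]}\|u_t - \widehat{u_t}\|$. Lemma~\ref{lem: projection error of time derivatives} with $k=1$ then supplies $CH^2/(\varepsilon\min\{\varepsilon^2,\delta^2\})$ under the $L^2$ hypothesis and $CH^3/(\varepsilon^2\delta\min\{\varepsilon^2,\delta^2\})$ under the $H^1$ one, which are exactly the spatial terms in the statement. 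For the temporal consistency error $z_2^n$, Taylor expansion about the midpoint $t_{n-1/2}$ in Peano-kernel form gives $\|z_2^n\| \le C\Delta t^2\max_{[t_{n-1},t_n]}\|u_{ttt}\|$, and Lemma~\ref{lem: temporal regularity} with $k=3$ yields $\|u_{ttt}\| \le C\varepsilon/\min\{\varepsilon^4,\delta^4\}$, producing $\Delta t\sum_n\|z_2^n\| \le C\varepsilon\Delta t^2/\min\{\varepsilon^4,\delta^4\}$ in both cases. Adding the $\rho^N$ bound completes the proof. The only conceptually delicate observation is the Hermitian cancellation in the energy identity; the remainder is careful bookkeeping of the $\varepsilon,\delta$-dependent regularity estimates from Section~\ref{sec: preliminaries}.
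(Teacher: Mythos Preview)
Your proposal is correct and follows essentially the same approach as the paper: the same splitting $U^n-u(t_n)=\theta^n+\rho^n$, the same test function $w=\theta^n+\theta^{n-1}$ with the imaginary-part cancellation of the $a$-term, the same telescoping estimate for $\|\theta^N\|$, and the same bounds on $z_1^n$ (via the projection-error lemma for $u_t$) and $z_2^n$ (via Taylor expansion and Lemma~\ref{lem: temporal regularity}). The only cosmetic difference is that you invoke the commutation $\partial_t\hat u=\widehat{u_t}$ explicitly to bound $\|z_1^n\|$ by $\max\|u_t-\widehat{u_t}\|$, whereas the paper writes $z_1^n=\frac{1}{\Delta t}\int_{t_{n-1}}^{t_n}(\hat u_t-u_t)\,ds$ before applying the same projection estimate; the content is identical.
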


\begin{proof}
We first consider the case where $u_t(t), u_{tt}(t), u_{ttt}(t) \in L^2(\Omega), \forall t \in [0, T]$. We have $|| U^n - u(t_n) || \le || \theta^n || + || \rho^n ||$ and $|| \rho^N || \le C \frac{H^2}{\varepsilon^2}$, $\theta^0 = 0$. Setting $w = \theta^n + \theta^{n - 1}$ in \eqref{eq: evolution of error in Crank-Nicolson} and taking the imaginary part of it, we have $\text{Re}(\bar{\partial} \theta^n, \theta^n + \theta^{n - 1}) = - \text{Re} (z^n_1 + z^n_2, \theta^n + \theta^{n - 1})$,
which implies
\begin{align}
\frac{1}{\Delta t} (|| \theta^n ||^2 - || \theta^{n - 1} ||^2) \le (|| \theta^n || + || \theta^{n - 1} ||) (|| z^n_1 || + || z^n_2 ||).
\end{align}
Therefore, we have
\begin{align}
|| \theta^N || \le || \theta^0 || + \Delta t \sum_{n = 1}^N (|| z^n_1 || + || z^n_2 ||).
\end{align}
For $|| z^n_1 ||$, $n = 1, 2, \ldots, N$,
\begin{align}
|| z^n_1 || & = || \bar{\partial} \hat{u}(t_n) - \bar{\partial} u(t_n) ||
\le \frac{1}{\Delta t} \int_{t_{n - 1}}^{t_n} || \hat{u}_t(s) - u_t(s) || ds \nonumber\\
& \le \frac{C}{\Delta t} \frac{H^2}{\varepsilon} \int_{t_{n - 1}}^{t_n} || u_{tt}(s) || ds \le \frac{C H^2}{\varepsilon \min\{ \varepsilon^2, \delta^2 \}}.
\end{align}
This gives us that $ \Delta t \sum_{n = 1}^N || z^n_1 || \le \frac{C H^2}{\varepsilon \min\{ \varepsilon^2, \delta^2 \}}$. For $|| z^n_2 ||$, $n = 1, 2, \ldots, N$, we have
\begin{align}
|| z^n_2 || & = \frac{1}{2 \Delta t} || 2 ( u(t_n) - u(t_{n - 1}) ) - \Delta t ( u_t(t_n) + u_t(t_{n - 1}) ) || \nonumber\\
& \le \frac{1}{2 \Delta t} \Big( \int_{t_{n - \frac{1}{2}}}^{t_n} (t_n - s)^2 || u_{ttt}(s) || ds + \int_{t_{n - 1}}^{t_{n - \frac{1}{2}}} (s - t_{n - 1})^2 || u_{ttt}(s) || ds \nonumber\\
& + \Delta t \int_{t_{n - \frac{1}{2}}}^{t_n} (t_n - s) || u_{ttt}(s) || ds + \Delta t \int_{t_{n - 1}}^{t_{n - \frac{1}{2}}} (s - t_{n - 1}) || u_{ttt}(s) || ds \Big) \nonumber\\
& \le C \Delta t^2 \max \limits_{0 \le t \le T} || u_{ttt}(t) || \le \frac{C \varepsilon \Delta t^2}{\min\{ \varepsilon^4, \delta^4 \}},
\end{align}
and hence $\Delta t \sum_{n = 1}^N || z^n_2 || \le \frac{C \varepsilon \Delta t^2}{\min\{ \varepsilon^4, \delta^4 \}}$.
Combining all the above inequalities, we obtain
\begin{align}
|| U^N - u(T) || \le C \left( \frac{\varepsilon \Delta t^2}{\min\{ \varepsilon^4, \delta^4 \}} + \frac{H^2}{\varepsilon \min\{ \varepsilon^2, \delta^2 \}} \right).
\end{align}
Moreover, if $u_t(t), u_{tt}(t) \in H^1(\Omega)$ for any $t \in [0, T]$, then we have $|| \rho^N || \le C \frac{H^3}{\varepsilon^3 \delta}$ and for $n = 1, 2, \ldots, N$,
\begin{align}
|| z^n_1 || \le \frac{C}{\Delta t} \frac{H^3}{\varepsilon} \int_{t_{n - 1}}^{t_n} || u_{tt}(s) ||_{H^1} ds \le \frac{C H^3}{\varepsilon^2 \delta \min\{ \varepsilon^2, \delta^2 \}}.
\end{align}
Using similar arguments, we can prove the estimate in \eqref{eq: super convergence of L2 error in Crank Nicolson}.
\end{proof}

For the error in the energy norm, we have the following theorem.
\begin{thm} \label{thm: energy error in Crank-Nicolson}
Assume that $U^N$ is the solution of \eqref{eq: Crank-Nicolson scheme} and $u$ is the solution of \eqref{eq: time-dependent Schrodinger equation}. If $u_t(t), u_{tt}(t), u_{ttt}(t), u_{tttt}(t) \in L^2(\Omega)$ for any $t \in [0, T]$, then
\begin{equation}
|| U^N - u(T) ||_e \le C \left( \frac{H}{\varepsilon} + \frac{H^2}{\min\{ \varepsilon^3, \delta^3 \}} + \frac{\varepsilon^2 \Delta t^2}{\min\{ \varepsilon^5, \delta^5 \}} \right).
\end{equation}
Moreover, if $u_t(t), u_{tt}(t), u_{ttt}(t) \in H^1(\Omega)$ for any $t \in [0, T]$, then
\begin{equation} \label{eq: super convergence of energy error in Crank Nicolson}
|| U^N - u(T) ||_e \le C \left( \frac{H^2}{\varepsilon^2 \delta} + \frac{H^3}{\varepsilon \delta \min\{ \varepsilon^3, \delta^3 \}} + \frac{\varepsilon^2 \Delta t^2}{\min\{ \varepsilon^5, \delta^5 \}} \right).
\end{equation}
\end{thm}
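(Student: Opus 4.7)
The plan is to adapt the $L^2$-norm proof of Theorem~\ref{thm: L2 error in Crank-Nicolson} to the energy norm. I split $U^N - u(T) = \theta^N + \rho^N$ as before; the projection-error part satisfies $\|\rho^N\|_e \le CH/\varepsilon$ by Lemma~\ref{lem: projection error}, which already supplies the $H/\varepsilon$ term in the stated bound. The main task is therefore to control $\|\theta^N\|_e$.

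First I would test the error equation \eqref{eq: evolution of error in Crank-Nicolson} with $w = \bar{\partial}\theta^n \in \Psi_H$. Since $a$ is Hermitian, expanding $a\!\left(\tfrac{\theta^n+\theta^{n-1}}{2}, \bar{\partial}\theta^n\right)$ and using $a(v,w) = \overline{a(w,v)}$ yields the key identity $\mathrm{Re}\, a\!\left(\tfrac{\theta^n+\theta^{n-1}}{2}, \bar{\partial}\theta^n\right) = \tfrac{1}{2\Delta t}(\|\theta^n\|_e^2 - \|\theta^{n-1}\|_e^2)$. Taking the real part of \eqref{eq: evolution of error in Crank-Nicolson} and summing from $n=1$ to $N$, together with $\theta^0 = 0$, gives the telescoped identity $\|\theta^N\|_e^2 = -2\varepsilon\sum_{n=1}^N \Delta t\, \mathrm{Im}(z_1^n+z_2^n, \bar{\partial}\theta^n)$.

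The decisive step is Abel summation, $\sum_{n=1}^N \Delta t\, (z^n, \bar{\partial}\theta^n) = (z^N, \theta^N) - \sum_{n=1}^{N-1}\Delta t\, (\bar{\partial}z^{n+1}, \theta^n)$, which shifts the unbounded discrete time derivative from $\theta^n$ onto the small consistency error $z^n = z_1^n + z_2^n$. Combined with the Poincar\'{e}-type bound $\|\theta^n\| \le C\|\theta^n\|_e$ (available since $V_{\min}>0$), applying the resulting inequality at every intermediate step $M \le N$ and taking the maximum over the resulting $\|\theta^M\|_e$ factor on the right-hand side gives $\|\theta^N\|_e \le C\varepsilon\bigl[\max_n \|z^n\| + \sum_{n=1}^{N-1}\Delta t\, \|\bar{\partial}z^{n+1}\|\bigr]$.

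It then remains to estimate four discrete-in-time quantities. For the projection pieces, $\|z_1^n\| \le \max_t \|\rho_t(t)\|$ and $\|\bar{\partial}z_1^{n+1}\| \le \max_t \|\rho_{tt}(t)\|$ follow by writing $z_1^n$ as a time-average of $\rho_t$ and differencing once more; both are bounded using Lemma~\ref{lem: projection error of time derivatives} with $k=1$ and $k=2$. For the Crank-Nicolson truncation, $\|z_2^n\| \le C\Delta t^2 \max_t \|u_{ttt}(t)\|$ is already obtained in the proof of Theorem~\ref{thm: L2 error in Crank-Nicolson}, and $\|\bar{\partial}z_2^{n+1}\| \le C\Delta t^2 \max_t \|u_{tttt}(t)\|$ follows from a Taylor expansion about $t_{n-1/2}$ pushed one order further. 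Inserting the temporal regularity of Lemma~\ref{lem: temporal regularity} (for $k=3,4$) and the mixed space-time regularity of Lemma~\ref{lem: temporal spatial regularity} then yields the three advertised terms, and the super-convergence estimate \eqref{eq: super convergence of energy error in Crank Nicolson} follows by replacing the base bounds in Lemmas~\ref{lem: projection error}--\ref{lem: projection error of time derivatives} by their higher-regularity counterparts. The main obstacle is verifying the $O(\Delta t^2 \|u_{tttt}\|)$ bound on $\|\bar{\partial}z_2^{n+1}\|$: one must check that differencing the Crank-Nicolson truncation once more in $n$ does not destroy the $\Delta t^2$ order but only costs a single extra time derivative of $u$.
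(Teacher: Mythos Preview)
Your approach is correct and is a genuinely different route from the paper's. Both arguments start from the same telescoping identity (your test function $\bar\partial\theta^n$ is the paper's $\theta^n-\theta^{n-1}$ up to a factor $1/\Delta t$), so both reach
\[
\|\theta^N\|_e^2 \;=\; -2\varepsilon\sum_{n=1}^N \mathrm{Im}\,(z^n,\theta^n-\theta^{n-1}).
\]
The divergence is in how the right-hand side is controlled. The paper bounds $\|\theta^n-\theta^{n-1}\|$ uniformly in $n$ by deriving a \emph{second} recursion: it differences \eqref{eq: evolution of error in Crank-Nicolson} in $n$, tests with $\bar\partial\theta^n+\bar\partial\theta^{n-1}$, and obtains $\|\bar\partial\theta^n\|\le\|\bar\partial\theta^1\|+\sum_j(\|z^j_1-z^{j-1}_1\|+\|z^j_2-z^{j-1}_2\|)$. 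Inserting this back yields a \emph{product} structure $\|\theta^N\|_e^2\lesssim \varepsilon\cdot X\cdot Y$ with $X\sim\|\rho_{tt}\|+\Delta t^2\|u_{tttt}\|$ and $Y\sim\|\rho_t\|+\Delta t^2\|u_{ttt}\|$, whose square root is a geometric mean. Your Abel summation instead shifts the discrete derivative onto $z^n$ and closes via $\|\theta^n\|\le V_{\min}^{-1/2}\|\theta^n\|_e$, producing an \emph{additive} bound $\|\theta^N\|_e\lesssim\varepsilon(X+Y)$.

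Your route is cleaner and avoids the auxiliary recursion, and it requires exactly the same regularity. The only cost is in the $\varepsilon,\delta$-constants: after inserting Lemmas~\ref{lem: temporal regularity} and~\ref{lem: projection error of time derivatives}, your bound reads
\[
\|U^N-u(T)\|_e \;\le\; C\Bigl(\tfrac{H}{\varepsilon}+\tfrac{\varepsilon H^2}{\min\{\varepsilon^4,\delta^4\}}+\tfrac{\varepsilon^3\Delta t^2}{\min\{\varepsilon^6,\delta^6\}}\Bigr),
\]
which coincides with the stated estimate when $\varepsilon\le\delta$ but is weaker by a factor $\varepsilon/\delta$ when $\delta<\varepsilon$. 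The paper's product structure is what recovers the sharper exponents $\min\{\varepsilon^3,\delta^3\}$ and $\min\{\varepsilon^5,\delta^5\}$ in that regime. So your proposal proves a valid (and only marginally weaker) theorem; to get the exact statement you would need to retain the product form, e.g.\ by bounding $\|\theta^n\|$ on the right via the already-established $L^2$ estimate $\|\theta^n\|\le\Delta t\sum_j\|z^j\|$ rather than via $\|\theta^n\|_e$.
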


\begin{proof}
We first consider the case where $u_t(t), u_{tt}(t), u_{ttt}(t), u_{tttt}(t) \in L^2(\Omega), \forall t \in [0, T]$. We have $|| U^n - u(t_n) ||_e \le || \theta^n ||_e + || \rho^n ||_e$, where $|| \rho^N ||_e \le C \frac{H}{\varepsilon}$. Setting $w = \theta^n - \theta^{n - 1}$ in \eqref{eq: evolution of error in Crank-Nicolson} and taking the real part of it, we have
\begin{align}
|| \theta^n ||_e^2 \le || \theta^{n - 1} ||_e^2 + 2 \varepsilon |(z^n_1 + z^n_2, \theta^n - \theta^{n - 1})| \le || \theta^{n - 1} ||_e^2 + 2 \varepsilon || \theta^n - \theta^{n - 1} || ( || z^n_1 || + || z^n_2 || ).
\end{align}
For $ || \theta^n - \theta^{n - 1} || $, we can derive by \eqref{eq: evolution of error in Crank-Nicolson} that
\begin{align}
i \varepsilon (\bar{\partial} \theta^n - \bar{\partial} \theta^{n - 1}, w) + i \varepsilon (z^n_1 - z^{n - 1}_1, w) + i \varepsilon (z^n_2 - z^{n - 1}_2, w) = a \left( \frac{\theta^n - \theta^{n - 2}}{2}, w \right), \quad \forall w \in \Psi_H.
\end{align}
Setting $w = \bar{\partial} \theta^n + \bar{\partial} \theta^{n - 1} = \frac{\theta^n - \theta^{n - 2}}{\Delta t}$ in the last equality and taking the imaginary part of it, we have
\begin{align}
|| \bar{\partial} \theta^n ||^2 - || \bar{\partial} \theta^{n - 1} ||^2 \le (|| \bar{\partial} \theta^n || + || \bar{\partial} \theta^{n - 1} ||)(|| z^n_1 - z^{n - 1}_1 || + || z^n_2 - z^{n - 1}_2 ||)
\end{align}
and hence
\begin{align}
||\theta^n - \theta^{n - 1}|| \le || \theta^1 - \theta^0 || + \Delta t \sum_{j = 2}^n (|| z^j_1 - z^{j - 1}_1 || + || z^j_2 - z^{j - 1}_2 ||).
\end{align}
For $|| \theta^1 - \theta^0 ||$, we have by the proof of Theorem \ref{thm: L2 error in Crank-Nicolson} that
\begin{align}
|| \theta^1 - \theta^0 || = || \theta^1 || \le \Delta t (|| z^1_1 || + || z^1_2 ||) \le C \left( \frac{\varepsilon \Delta t^3}{\min\{ \varepsilon^4, \delta^4 \}} + \frac{\Delta t H^2}{\varepsilon \min\{ \varepsilon^2, \delta^2 \}} \right).
\end{align}
For $\Delta t \sum_{j = 2}^n || z^j_1 - z^{j - 1}_1 ||$, we have
\begin{align}
|| z^j_1 - z^{j - 1}_1 || & = \frac{1}{\Delta t} || \rho(t_{j}) - 2 \rho(t_{j - 1}) + \rho(t_{j - 2}) || \nonumber\\
& \le \frac{1}{\Delta t} \left( \int_{t_{j - 1}}^{t_j} (t_j - s) || \rho_{tt}(s) || ds + \int_{t_{j - 2}}^{t_{j - 1}} (s - t_{j - 2}) || \rho_{tt}(s) || ds \right) \nonumber\\
& \le C \frac{\Delta t H^2}{\varepsilon} \max \limits_{0 \le t \le T} || u_{ttt}(t) ||  \le \frac{C \Delta t H^2}{ \min\{ \varepsilon^4, \delta^4 \} }
\end{align}
and hence $\Delta t \sum_{j = 2}^n || z^j_1 - z^{j - 1}_1 || \le \frac{C \Delta t H^2}{ \min\{ \varepsilon^4, \delta^4 \} }$. For the term $\Delta t \sum_{j = 2}^n || z^j_2 - z^{j - 1}_2 ||$, we have
\begin{align}
|| z^j_2 - z^{j - 1}_2 || & = \frac{1}{2 \Delta t} || 2(u(t_j) - 2 u(t_{j - 1}) + u(t_{j - 2})) - \Delta t (u_t(t_j) - u_t(t_{j - 2})) || \nonumber\\
& \le \frac{1}{12 \Delta t} \Big( 2 \int_{t_{j - 1}}^{t_j} (t_j - s)^3 || u_{tttt}(s) || ds +  2 \int_{t_{j - 2}}^{t_{j - 1}} (s - t_{j - 2})^3 || u_{tttt}(s) || ds \nonumber\\
& + 3 \Delta t \int_{t_{j - 1}}^{t_j} (t_j - s)^2 || u_{tttt}(s) || ds + 3 \Delta t \int_{t_{j - 2}}^{t_{j - 1}} (s - t_{j - 2})^2 || u_{tttt}(s) || ds \Big) \nonumber\\
& \le C \Delta t^3 \max \limits_{0 \le t \le T} || u_{tttt}(t) ||  \le \frac{C \varepsilon^2 \Delta t^3}{\min\{\varepsilon^6, \delta^6\}}
\end{align}
and hence $\Delta t \sum_{j = 2}^n || z^j_2 - z^{j - 1}_2 || \le  \frac{C \varepsilon^2 \Delta t^3}{\min\{\varepsilon^6, \delta^6\}}$. Therefore
\begin{align}
|| \theta^N ||_e^2 \le || \theta^0 ||_e^2 + C \varepsilon \left( \frac{\Delta t H^2}{ \min\{ \varepsilon^4, \delta^4 \} } + \frac{\varepsilon^2 \Delta t^3}{\min\{\varepsilon^6, \delta^6\}} \right) \sum_{n = 1}^N ( || z^n_1 || + || z^n_2 || ).
\end{align}
According to the proof of Theorem \ref{thm: L2 error in Crank-Nicolson}, we have
$\Delta t \sum_{n = 1}^N || z^n_1 || \le \frac{C H^2}{\varepsilon \min\{ \varepsilon^2, \delta^2 \}}$ and  $\Delta t \sum_{n = 1}^N || z^n_2 || \le \frac{C \varepsilon \Delta t^2}{\min\{ \varepsilon^4, \delta^4 \}}$. Therefore, we obtain
\begin{align}
|| U^N - u(T) ||_e \le C \left( \frac{H}{\varepsilon} + \frac{H^2}{\min\{ \varepsilon^3, \delta^3 \}} + \frac{\varepsilon^2 \Delta t^2}{\min\{ \varepsilon^5, \delta^5 \}} \right).
\end{align}
Moreover, if $u_t(t), u_{tt}(t), u_{ttt}(t) \in H^1(\Omega)$ for any $t \in [0, T]$, then we have $|| \rho^N ||_e \le C \frac{H^2}{\varepsilon^2 \delta}$ and for $j = 2, 3, \ldots, N$,
\begin{align}
|| z^j_1 - z^{j - 1}_1 || \le C \frac{\Delta t H^3}{\varepsilon} \max \limits_{0 \le t \le T} || u_{ttt}(t) ||_{H^1} \le \frac{C \Delta t H^3}{\varepsilon \delta \min\{ \varepsilon^4, \delta^4 \} }.
\end{align}
And from the proof of Theorem \ref{thm: L2 error in Crank-Nicolson}, we know that
\begin{align}
|| \theta^1 - \theta^0 || \le C \left( \frac{\varepsilon \Delta t^3}{\min\{ \varepsilon^4, \delta^4 \}} + \frac{\Delta t H^3}{\varepsilon^2 \delta \min\{ \varepsilon^2, \delta^2 \}} \right), \quad \Delta t \sum_{n = 1}^N || z^n_1 || \le \frac{C H^3}{\varepsilon^2 \delta \min\{ \varepsilon^2, \delta^2 \}}.
\end{align}
Hence we can obtain the estimate \eqref{eq: super convergence of energy error in Crank Nicolson} using similar arguments.
\end{proof}

\begin{rem}
For the estimates in Theorem \ref{thm: L2 error in Crank-Nicolson}, and Theorem \ref{thm: energy error in Crank-Nicolson}, the constants $C$'s depend polynomially and at most quadratically on the final time $T$.
\end{rem}

\section{Numerical experiments}\label{sec: numerical experiments}

In this section, we present numerical results to justify our analysis, where the potential is smooth in one example and possesses discontinuities in the other. We consider \eqref{eq: time-dependent Schrodinger equation} in one dimension with domain $\Omega = [0, 2\pi]$, finial time $T = 0.5$ and initial data
\begin{equation}
u_0(x) = \left( \frac{10}{\pi} \right)^{1/4} \exp \left( - 5 (x - \pi)^2 \right) \exp \left( - i \frac{(x - \pi)^2}{\varepsilon} \right).
\end{equation}
We will compare the relative errors between the numerical solution $u_{\text{num}}$ and the reference solution $u_{\text{ref}}$ in $L^2$ norm and $H^1$ norm with
\begin{align}
\text{err}_{L^2} = \frac{|| u_{\text{num}} - u_{\text{ref}} ||}{|| u_{\text{ref}} ||}, \quad \text{err}_{H^1} = \frac{|| u_{\text{num}} - u_{\text{ref}} ||_{H^1}}{|| u_{\text{ref}} ||_{H^1}}.
\end{align}
Recall that the $H^1$ norm is equivalent to the energy norm.

\subsection{Smooth potentials}

Consider the smooth potential
\begin{equation} \label{eq: smooth potential}
V = \cos\left( \frac{x}{\delta} \right) + 2.
\end{equation}
We choose (i) $\varepsilon = \frac{1}{8}, \delta = \frac{1}{10}$ and (ii) $\varepsilon = \frac{1}{32}, \delta = \frac{1}{24}$. The reference solution is computed by the time-splitting spectral method \cite{bao2002time} with $\Delta t = \frac{1}{2^{26}}, H = \frac{\pi}{2^{15}}$. The numerical solution is computed by the Crank-Nicolson standard linear FEM and the Crank-Nicolson localized OC MsFEM with $\Delta t = \frac{1}{2^{24}}$ and $H = \frac{\pi}{64}, \frac{\pi}{96}, \frac{\pi}{128}, \frac{\pi}{192}, \frac{\pi}{256}$ for case (i), $H = \frac{\pi}{96}, \frac{\pi}{128}, \frac{\pi}{192}, \frac{\pi}{256}, \frac{\pi}{384}$ for case (ii). The oversampling size for the localized OC MsFEM is chosen as $m = 3 \lceil \log_2(\frac{2\pi}{H}) \rceil$. The results are shown in Table \ref{tab: smooth potential eps 8}, Table \ref{tab: smooth potential eps 32}, Figure  \ref{fig: smooth potential eps 8}, and Figure \ref{fig: smooth potential eps 32}.

\begin{table}[h]
\caption{Errors for potential \eqref{eq: smooth potential} with $\varepsilon = 1/8$ and $\delta = 1/10$.} \label{tab: smooth potential eps 8}
\begin{center}
\begin{tabular}{|c c c c c c|}
\hline
$H$ & $\frac{\pi}{64}$ & $\frac{\pi}{96}$ & $\frac{\pi}{128}$ & $\frac{\pi}{192}$ & $\frac{\pi}{256}$ \\
\hline
$\text{err}_{L^2}$ of FEM & 1.0609E-01 & 4.9109E-02 & 2.8067E-02 & 1.2637E-02 & 7.1021E-03 \\
convergence order & & 1.90 & 1.94 & 1.83 & 2.24 \\
$\text{err}_{L^2}$ of localized OC MsFEM & 2.7487E-04 & 3.8263E-05 & 1.1229E-05 & 2.1894E-06 & 8.0399E-07 \\
convergence order & & 4.86 & 4.26 & 3.75 & 3.90 \\
\hline
$\text{err}_{H^1}$ of FEM & 2.7651E-01 & 1.4938E-01 & 9.9142E-02 & 5.8818E-02 & 4.1849E-02 \\
convergence order & & 1.52 & 1.43 & 1.20 & 1.32 \\
$\text{err}_{H^1}$ of localized OC MsFEM & 3.6524E-03 & 9.8017E-04 & 4.0261E-04 & 1.1748E-04 & 4.8910E-05 \\
convergence order & & 3.24 & 3.09 & 2.82 & 3.41 \\
\hline
\end{tabular}
\end{center}
\end{table}

\begin{table}[h]
\caption{Errors for potential \eqref{eq: smooth potential} with $\varepsilon = 1/32$ and $ \delta = 1/24$.} \label{tab: smooth potential eps 32}
\begin{center}
\begin{tabular}{|c c c c c c|}
\hline
$H$ & $\frac{\pi}{96}$ & $\frac{\pi}{128}$ & $\frac{\pi}{192}$ & $\frac{\pi}{256}$ & $\frac{\pi}{384}$ \\
\hline
$\text{err}_{L^2}$  of FEM & 1.0269E+00 & 7.6698E-01 & 4.4394E-01 & 2.7804E-01 & 1.3189E-01 \\
convergence order & & 1.01 & 1.25 & 1.82 & 1.71 \\
$\text{err}_{L^2}$  of localized OC MsFEM & 1.3898E-01 & 2.7714E-02 & 1.8745E-03 & 3.2749E-04 & 4.9916E-05 \\
convergence order & & 5.60  & 6.17 & 6.79 & 4.31 \\
\hline
$\text{err}_{H^1}$ of FEM & 1.3217E+00 & 1.0963E+00 & 7.1925E-01 & 4.9949E-01 & 2.6246E-01 \\
convergence order & & 0.65 & 0.97 & 1.42 & 1.48 \\
$\text{err}_{H^1}$ of localized OC MsFEM & 3.0094E-01 & 7.7003E-02 & 1.0450E-02 & 3.6681E-03 & 9.8675E-04 \\
convergence order & & 4.47 & 4.58 & 4.08 & 3.01 \\
\hline
\end{tabular}
\end{center}
\end{table}

\begin{figure}[H]
	\centering
	\begin{subfigure}{0.4\textwidth}
		\includegraphics[width=\linewidth]{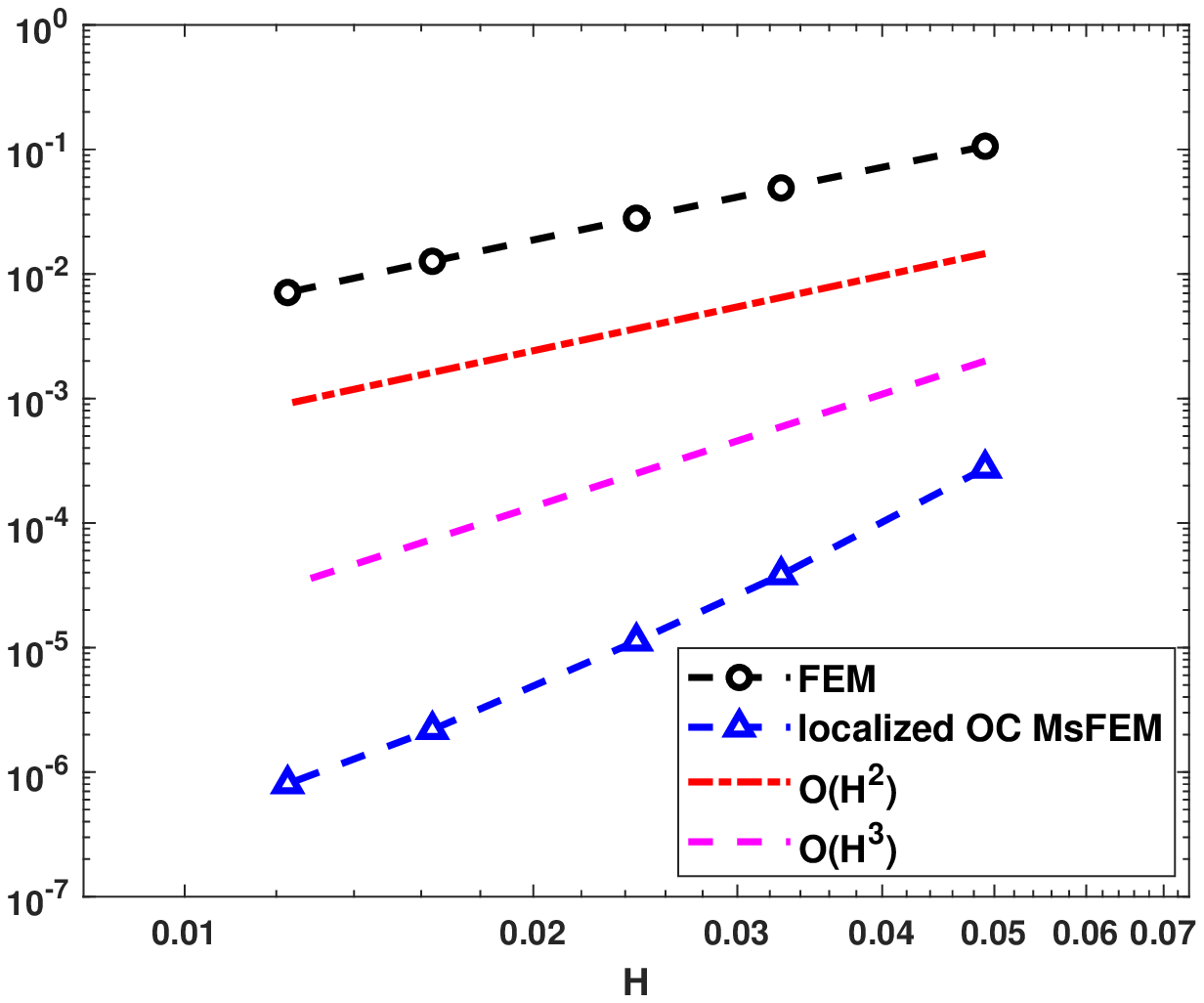}\par
		\caption{$L^2$ relative error $\text{err}_{L^2}$}
	\end{subfigure}
	\begin{subfigure}{0.4\textwidth}
		\includegraphics[width=\linewidth]{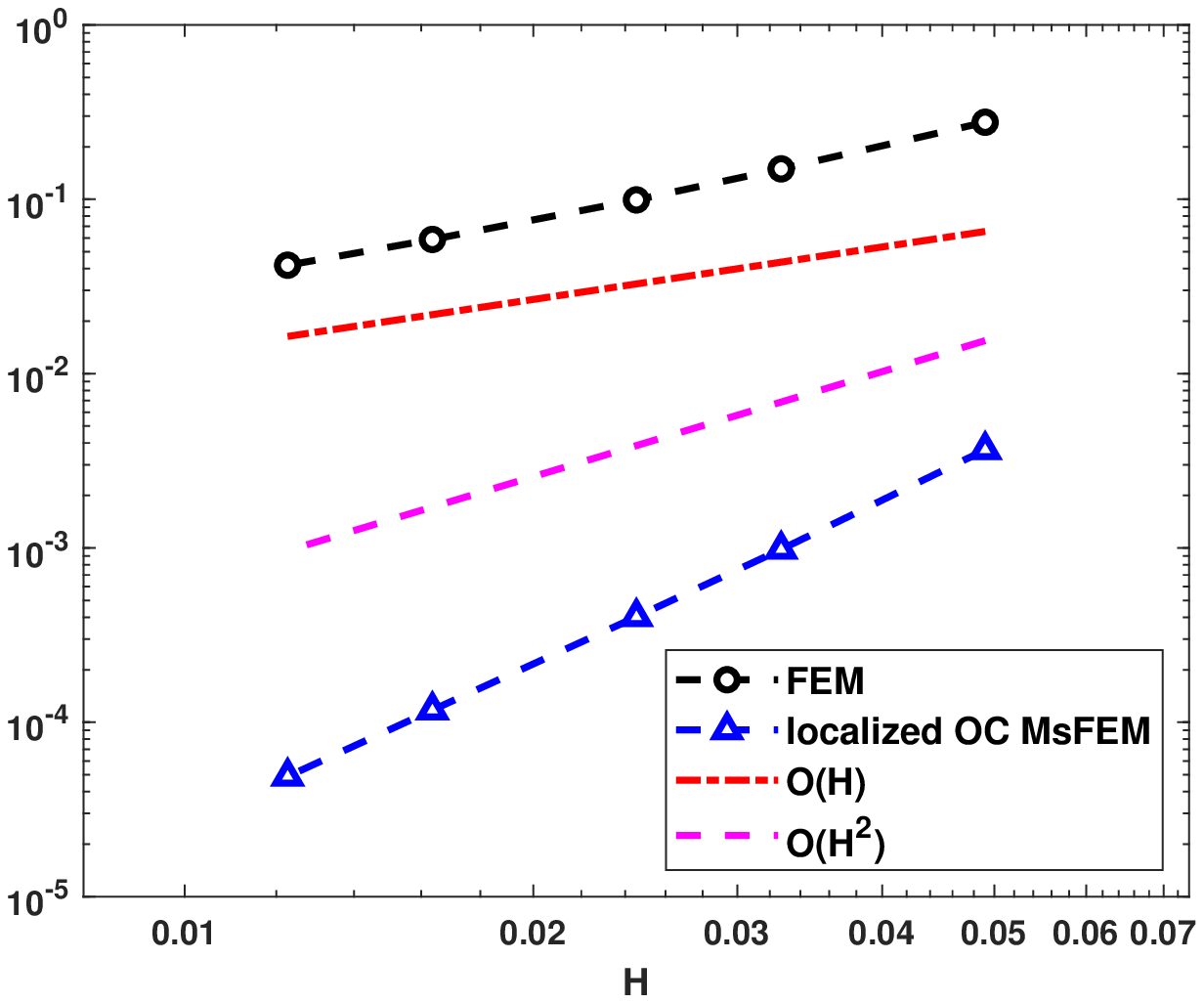}\par
		\caption{$H^1$ relative error $\text{err}_{H^1}$}
	\end{subfigure}
	\caption{Errors for potential \eqref{eq: smooth potential} with $\varepsilon = 1/8$ and $ \delta = 1/10$.}
	\label{fig: smooth potential eps 8}
\end{figure}

\begin{figure}[H]
	\centering
	\begin{subfigure}{0.4\textwidth}
		\includegraphics[width=\linewidth]{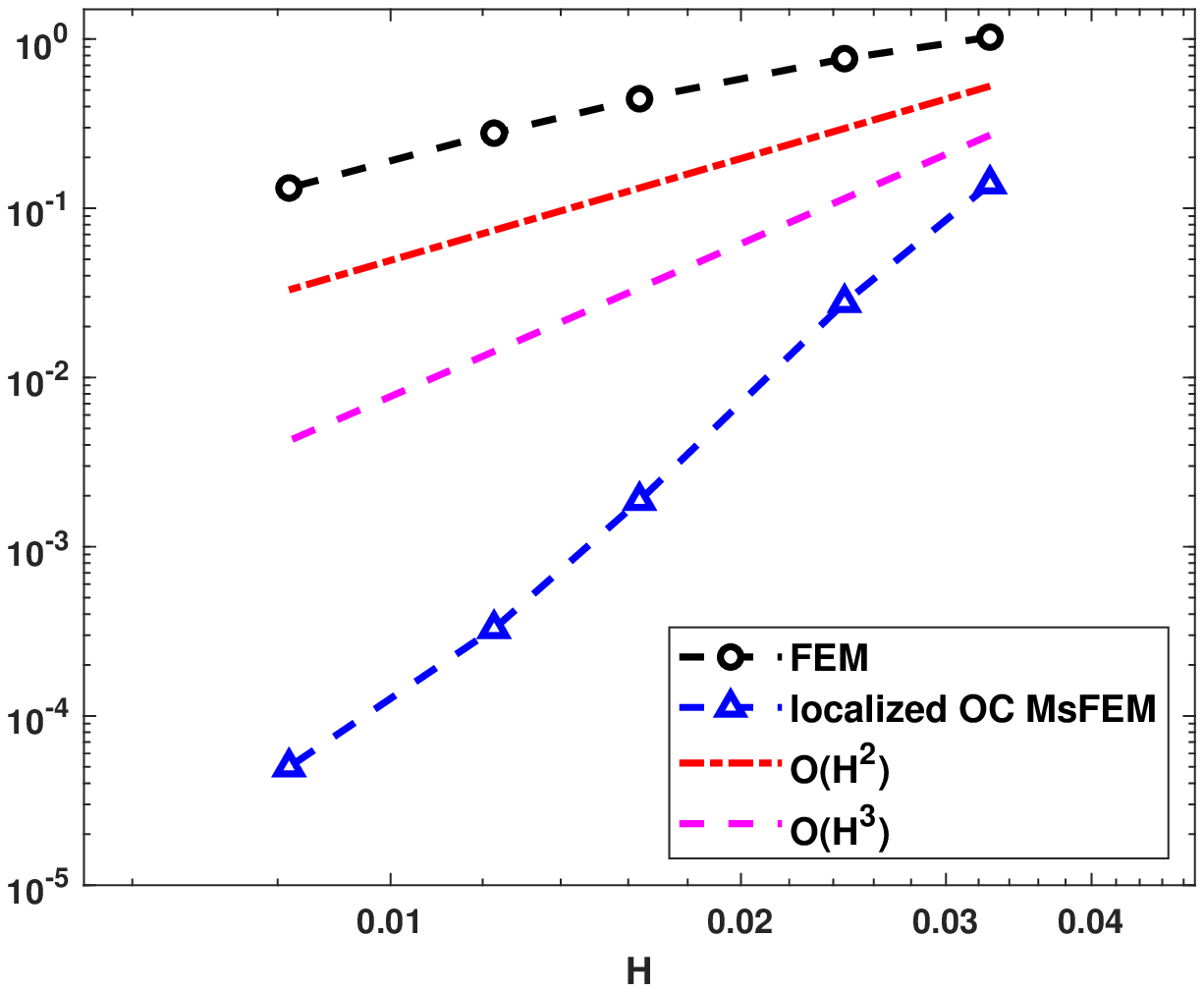}\par
		\caption{$L^2$ relative error $\text{err}_{L^2}$}
	\end{subfigure}
	\begin{subfigure}{0.4\textwidth}
		\includegraphics[width=\linewidth]{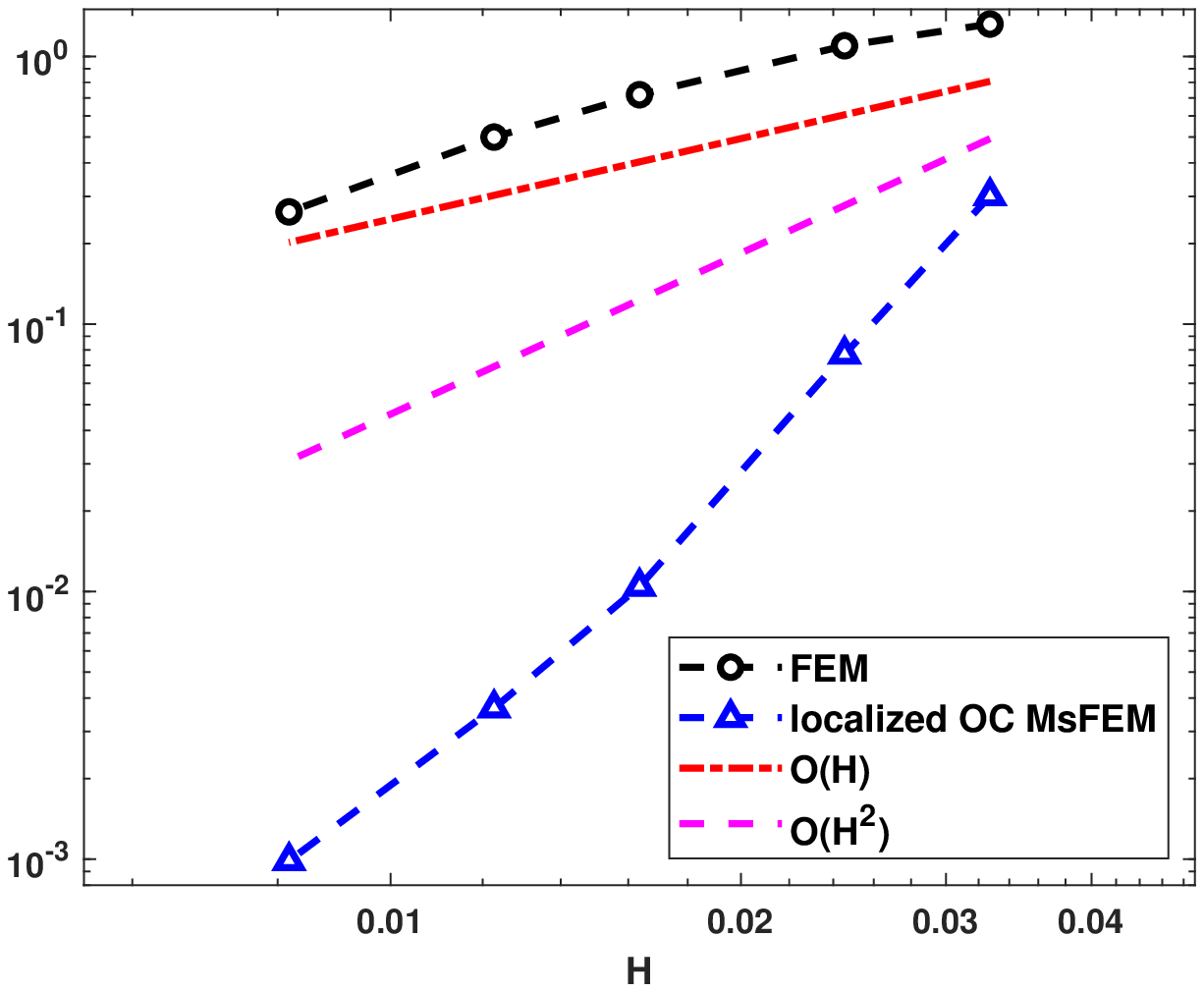}\par
		\caption{$H^1$ relative error $\text{err}_{H^1}$}
	\end{subfigure}
	\caption{Errors for potential \eqref{eq: smooth potential} with $\varepsilon = 1/32$ and $ \delta = 1/24$.}
	\label{fig: smooth potential eps 32}
\end{figure}

For the standard linear FEM, first-order convergence in the energy norm and second-order convergence in the $L^2$ norm are observed. While for the localized OC MsFEM, super convergence is observed and the convergence rates are even higher than the estimates \eqref{eq: super convergence of L2 error in Crank Nicolson}, \eqref{eq: super convergence of energy error in Crank Nicolson} proposed in Theorem \ref{thm: L2 error in Crank-Nicolson} and Theorem \ref{thm: energy error in Crank-Nicolson}. This super-convergence behavior is due to the smoothness of the potential \eqref{eq: smooth potential} that results in a solution with high regularity.

\subsection{Discontinuous Potentials}

Consider the potential
\begin{equation} \label{eq: discontinuous potential}
V = | x - \pi |^2 + 2 +
\left\{
\begin{aligned}
& \cos\left( \frac{x}{\delta_1} \right), \quad x \in [0, \pi], \\
& \cos\left( \frac{x}{\delta_2} \right), \quad x \in (\pi, 2\pi].
\end{aligned}
\right.
\end{equation}
We choose (i) $\varepsilon = \frac{1}{8}, \delta_1 = \frac{1}{5}, \delta_2 = \frac{1}{10}$ and (ii) $\varepsilon = \frac{1}{32}, \delta_1 = \frac{1}{40}, \delta_2 = \frac{1}{25}$. In both cases, the potential \eqref{eq: discontinuous potential} is discontinuous at $x = \pi$ and has different lattice structures on $[0, \pi]$ and $(\pi, 2\pi]$. The reference solution is computed by the Crank-Nicolson global OC MsFEM with $\Delta t = \frac{1}{2^{26}}, H = \frac{\pi}{1024}$. The numerical solutions are computed by the time-splitting spectral method (TSSP), Crank-Nicolson standard linear FEM and Crank-Nicolson localized OC MsFEM with $\Delta t = \frac{1}{2^{24}}$ and $H = \frac{\pi}{64}, \frac{\pi}{96}, \frac{\pi}{128}, \frac{\pi}{192}, \frac{\pi}{256}$ for case (i), $H = \frac{\pi}{96}, \frac{\pi}{128}, \frac{\pi}{192}, \frac{\pi}{256}, \frac{\pi}{384}$ for case (ii). The oversampling size for the localized OC MsFEM is chosen as $m = 2 \lceil \log_2(\frac{2\pi}{H}) \rceil$. The results are shown in Table  \ref{tab: discontinuous potential eps 8}, Table \ref{tab: discontinuous potential eps 32}, Figure \ref{fig: discontinuous potential eps 8}, and Figure \ref{fig: discontinuous potential eps 32}.

\begin{table}[H]
\caption{Errors for potential \eqref{eq: discontinuous potential} with $\varepsilon = 1/8$, $\delta_1 = 1/5$, and $\delta_2 = 1/10$.} \label{tab: discontinuous potential eps 8}
\begin{center}
\begin{tabular}{|c c c c c c|}
\hline
$H$ & $\frac{\pi}{64}$ & $\frac{\pi}{96}$ & $\frac{\pi}{128}$ & $\frac{\pi}{192}$ & $\frac{\pi}{256}$ \\
\hline
$\text{err}_{L^2}$ of TSSP & 4.1036E-01 & 2.6171E-01 & 2.0189E-01 & 1.1358E-01 & 9.9578E-02 \\
convergence order & & 1.11 & 0.90 & 1.32 & 0.51 \\
$\text{err}_{L^2}$ of FEM & 1.2790E-01 & 6.8296E-02 & 4.4099E-02 & 2.4116E-02 & 1.5917E-02 \\
convergence order & & 1.55 & 1.52 & 1.38 & 1.62 \\
$\text{err}_{L^2}$ of localized OC MsFEM & 8.8626E-03 & 4.0528E-03 & 2.3565E-03 & 1.1604E-03 & 6.4039E-04 \\
convergence order & & 1.93 & 1.88 & 1.62 & 2.31 \\
\hline
$\text{err}_{H^1}$ of TSSP & 5.1651E-01 & 3.2669E-01 & 2.5942E-01 & 1.4208E-01 & 1.2934E-01 \\
convergence order & & 1.13 & 0.80 & 1.38 & 0.37 \\
$\text{err}_{H^1}$ of FEM & 3.3974E-01 & 2.2411E-01 & 1.7045E-01 & 1.1976E-01 & 9.5007E-02 \\
convergence order & & 1.03 & 0.95 & 0.81 & 0.90 \\
$\text{err}_{H^1}$ of localized OC MsFEM & 5.7921E-02 & 4.3351E-02 & 3.2635E-02 & 2.2383E-02 & 1.5132E-02 \\
convergence order & & 0.71 & 0.99 & 0.86 & 1.52 \\
\hline
\end{tabular}
\end{center}
\end{table}

\begin{table}[H]
\caption{Errors for potential \eqref{eq: discontinuous potential} with $\varepsilon = 1/32$, $\delta_1 = 1/40$, and $\delta_2 = 1/25$.} \label{tab: discontinuous potential eps 32}
\begin{center}
\begin{tabular}{|c c c c c c|}
\hline
$H$ & $\frac{\pi}{96}$ & $\frac{\pi}{128}$ & $\frac{\pi}{192}$ & $\frac{\pi}{256}$ & $\frac{\pi}{384}$ \\
\hline
$\text{err}_{L^2}$ of TSSP & 8.6933E-01 & 8.2000E-01 & 7.2275E-01 & 5.1762E-01 & 4.7231E-01 \\
convergence order & & 0.20 & 0.29 & 1.30 & 0.21 \\
$\text{err}_{L^2}$ of FEM & 1.0980E+00 & 8.2753E-01 & 4.9740E-01 & 3.2823E-01 & 1.6666E-01 \\
convergence order & & 0.98 & 1.17 & 1.62 & 1.55 \\
$\text{err}_{L^2}$ of localized OC MsFEM & 1.8558E-01 & 5.9763E-02 & 1.4948E-02 & 7.0768E-03 & 2.7845E-03 \\
convergence order & & 3.94 & 3.18 & 2.91 & 2.14 \\
\hline
$\text{err}_{H^1}$ of TSSP & 9.4843E-01 & 8.6754E-01 & 6.5087E-01 & 5.2097E-01 & 6.0281E-01 \\
convergence order & & 0.31 & 0.66 & 0.87 & -0.33 \\
$\text{err}_{H^1}$ of FEM & 1.1400E+00 & 9.5249E-01 & 6.7419E-01 & 4.8814E-01 & 2.9260E-01 \\
convergence order & & 0.62 & 0.79 & 1.26 & 1.17 \\
$\text{err}_{H^1}$ of localized OC MsFEM & 3.1450E-01 & 1.3862E-01 & 5.3020E-02 & 3.2920E-02 & 2.2157E-02 \\
convergence order & & 2.85 & 2.20 & 1.85 & 0.91 \\
\hline
\end{tabular}
\end{center}
\end{table}

\begin{figure}[H]
	\centering
	\begin{subfigure}{0.4\textwidth}
		\includegraphics[width=\linewidth]{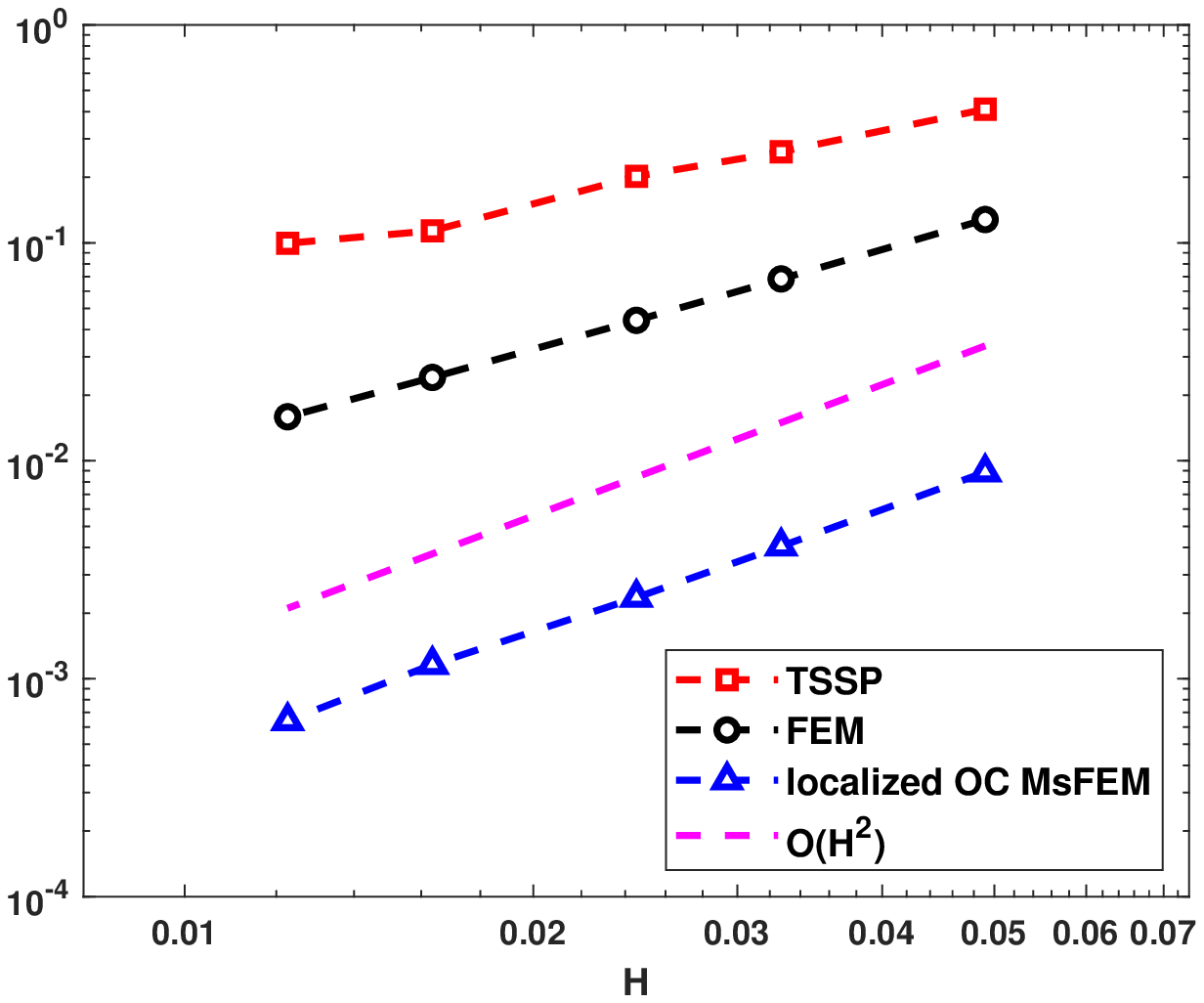}\par
		\caption{$L^2$ relative error $\text{err}_{L^2}$}
	\end{subfigure}
	\begin{subfigure}{0.4\textwidth}
		\includegraphics[width=\linewidth]{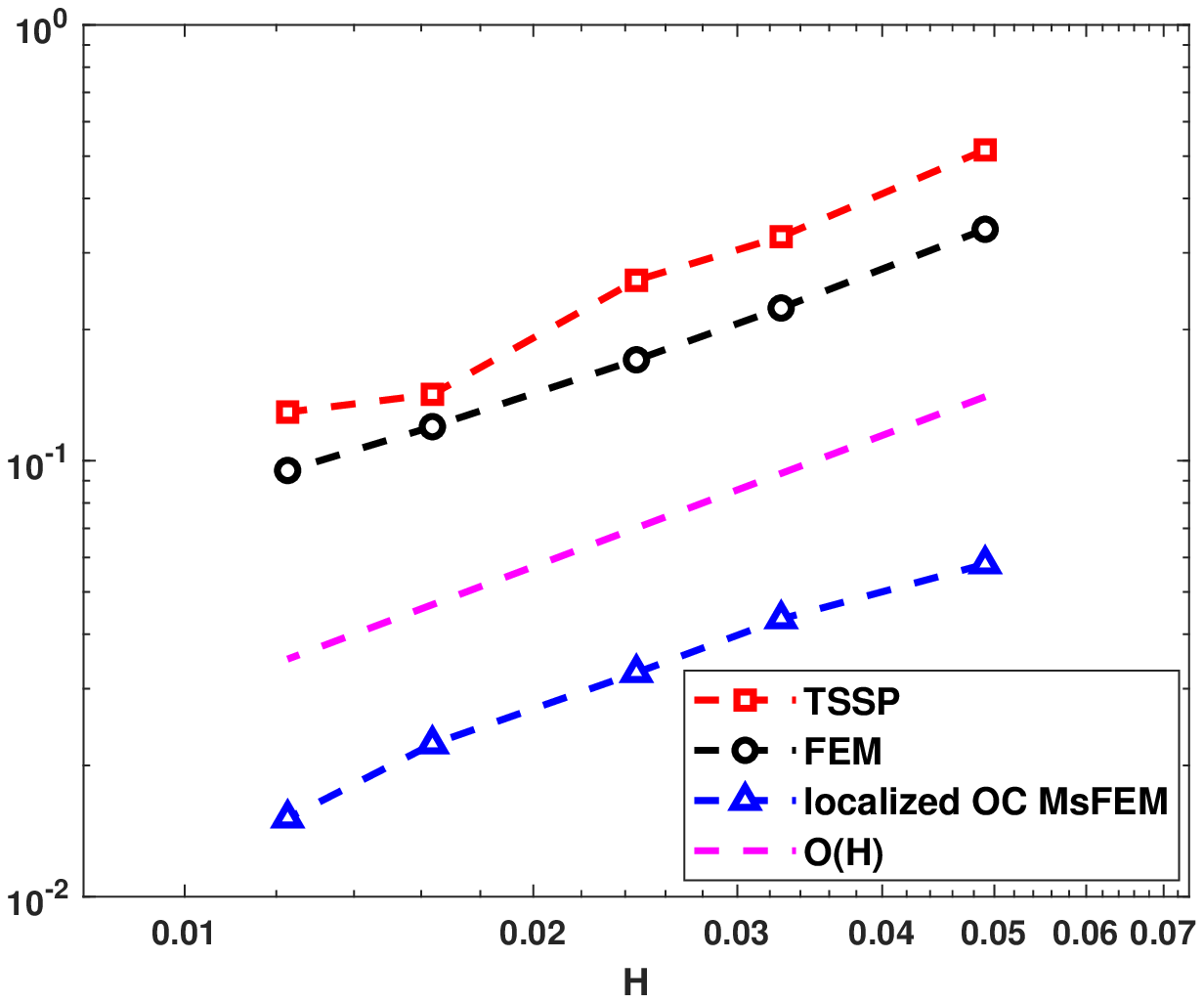}\par
		\caption{$H^1$ relative error $\text{err}_{H^1}$}
	\end{subfigure}
	\caption{Errors for potential \eqref{eq: discontinuous potential} with $\varepsilon = 1/8$, $\delta_1 = 1/5$, and $\delta_2 = 1/10$.}
	\label{fig: discontinuous potential eps 8}
\end{figure}

\begin{figure}[H]
	\centering
	\begin{subfigure}{0.4\textwidth}
		\includegraphics[width=\linewidth]{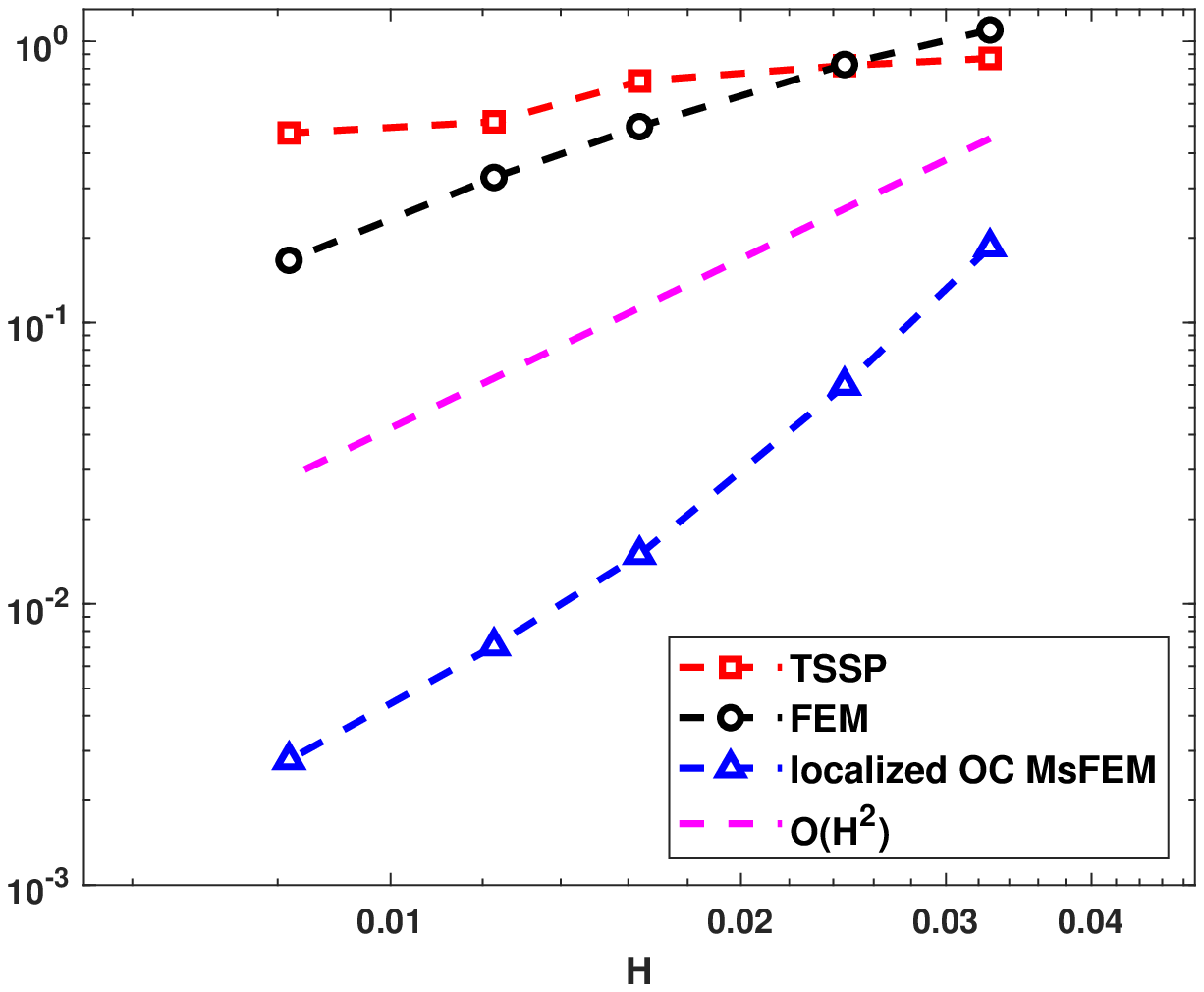}\par
		\caption{$L^2$ relative error $\text{err}_{L^2}$}
	\end{subfigure}
	\begin{subfigure}{0.4\textwidth}
		\includegraphics[width=\linewidth]{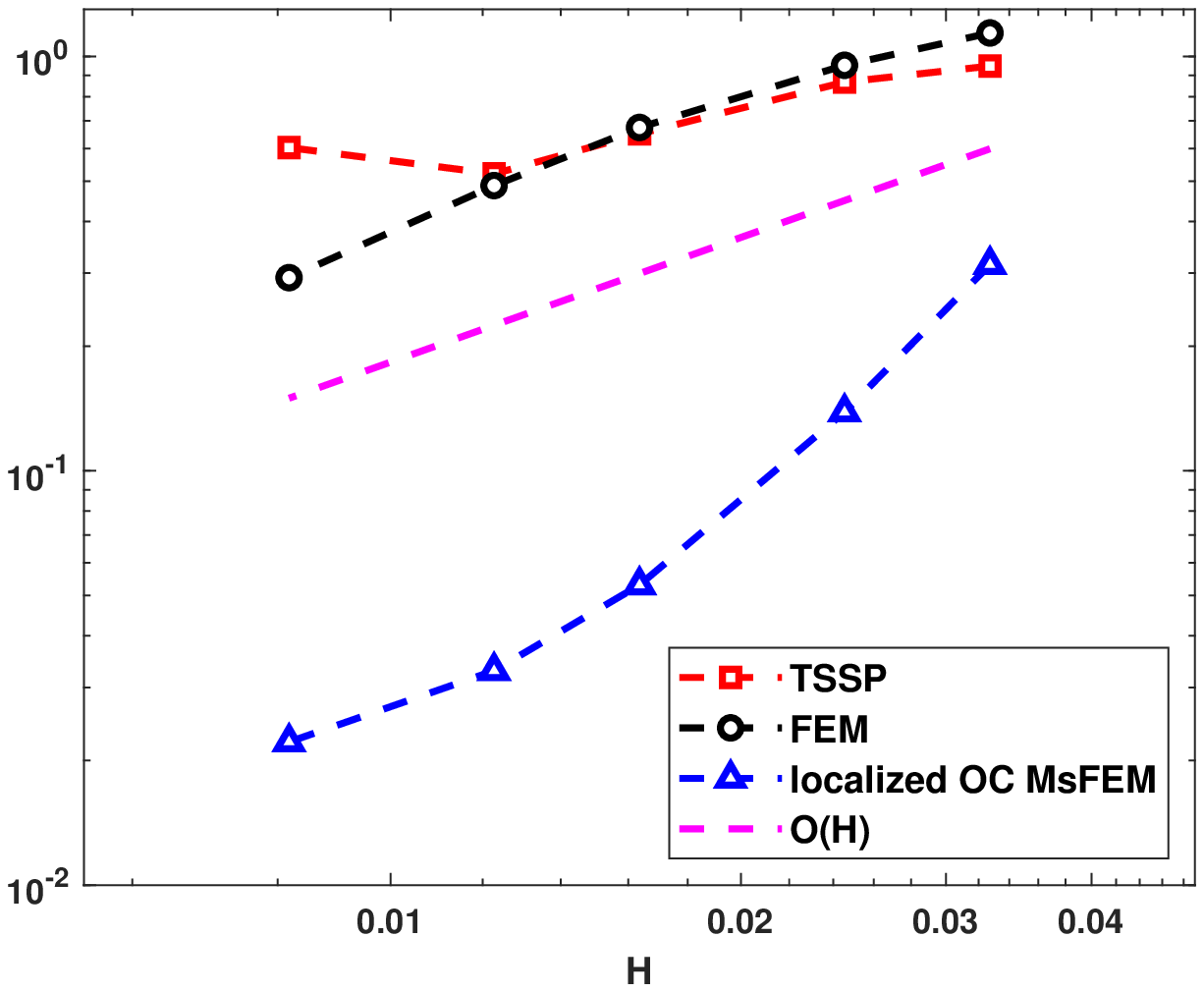}\par
		\caption{$H^1$ relative error $\text{err}_{H^1}$}
	\end{subfigure}
	\caption{Errors for potential \eqref{eq: discontinuous potential} with $\varepsilon = 1/32$, $\delta_1 = 1/40$, and $\delta_2 = 1/25$.}
	\label{fig: discontinuous potential eps 32}
\end{figure}

The time-splitting spectral method suffers from reduced convergence order and low accuracy due to the discontinuous potential \eqref{eq: discontinuous potential}. However, convergence rates of approximately first order in the energy norm and second order in the $L^2$ norm are still observed for the FEM and OC MsFEM although the discontinuous potential \eqref{eq: discontinuous potential} results in a solution with lower regularity. Moreover, the OC MsFEM yields much higher accuracy than the FEM, which is due to the fact that for a solution with low regularity in the presence of a multiscale potential, the projection error estimates of the OC MsFEM depend more weakly on the small parameters than the FEM. The result is consistent with the discussion in Section \ref{sec: Projection error}. This numerical example shows that the OC MsFEM is robust in the sense that it still yields an accurate solution for the semiclassical Schr\"{o}dinger equation with a discontinuous multiscale potential.

Both examples confirm our theoretical findings and indicate that the OC MsFEM is accurate and robust for the Schr\"{o}dinger equation with  general multiscale potentials.

\section{Conclusion} \label{sec: conclusion}
In this paper, we provide a rigorous convergence analysis  for the OC MsFEM in solving Schr\"{o}dinger equations with general multiscale potentials in the semiclassical regime. We prove the exponential decay of the multiscale basis functions and propose the way of constructing the localized multiscale basis functions. Besides, we show that the localized basis functions can achieve the same accuracy as the global ones by choosing the oversampling size $m$ appropriately according to the mesh size $H$ as $m = O(\log(1/H))$. Based on the properties of Cl\'{e}ment-type interpolation, we prove that the OC MsFEM can achieve first-order convergence in energy norm and second-order convergence in $L^2$ norm. Furthermore, if the solution possesses sufficiently high regularity, super convergence rates of second order in energy norm and third order in $L^2$ norm can be obtained. By combining the analysis on the regularity of the solution, we also derive the dependence of the error estimates on the small parameters. We find that using the same mesh size the OC MsFEM gives more accurate results than the FEM in solving Schr\"{o}dinger equations with multiscale potentials due to its super convergence behavior for high-regularity solutions and weaker dependence on the small parameters $\varepsilon$ and $\delta$ for low-regularity solutions. The weaker dependence of the OC MsFEM on the small parameters is a consequence of the fact that the time derivatives of the solution are less oscillatory than the spatial derivatives in the presence of the multiscale potential. Numerical results confirm our analysis. For a smooth potential, super convergence rates are observed for the OC MsFEM. While for a discontinuous potential, the OC MsFEM retains first-order and second-order convergence in the energy norm and $L^2$ norm respectively and still yields high accuracy. Therefore, the OC MsFEM is accurate and robust for the semiclassical Schr\"{o}dinger equation with various types of multiscale potentials.

In the future, we will study the convergence analysis of the OC MsFEM for solving eigenvalue problems for the Schr\"{o}dinger operators and  nonlinear Schr\"{o}dinger equations in the semiclassical regime. In addition, we will apply the OC MsFEM to solve wave equations with multiscale features, such as the Klein-Gordon equation \cite{huang2009bloch}.

\section*{Acknowledgement}
\noindent
The research of Z. Zhang is supported by Hong Kong RGC grants (Projects 17300817, 17300318, and 17307921), Seed Funding Programme for Basic Research (HKU), and Basic Research Programme (JCYJ20180307151603959) of The Science, Technology and Innovation Commission of Shenzhen Municipality. The computations were performed using research computing facilities offered by Information Technology Services, the University of Hong Kong.

\bibliographystyle{siam}
\bibliography{reference}

\end{document}